\documentclass{amsart}
\usepackage{amsmath,amsfonts,amssymb,amsbsy,eucal,mathrsfs,enumitem}
\usepackage[all]{xy}
\usepackage{pstricks}
\usepackage{pst-plot}
\usepackage{graphicx}
\usepackage[utf8]{inputenc}
\usepackage{tikz,tikz-cd}
\usepackage{multicol}

\setcounter{secnumdepth}{3}

\definecolor{pink}{rgb}{1,0.5,0.5}

\def\newthm#1#2{\newtheorem{#1}[dummy]{#2}%
  \expandafter\def\csname#2\endcsname##1{\hyperref[#1:##1]{#2~\ref*{#1:##1}}}}
\newthm{thm}{Theorem}
\newthm{lemma}{Lemma}
\newthm{prop}{Proposition}
\newthm{cor}{Corollary}
\newthm{conj}{Conjecture}
\newthm{cons}{Consequence}
\newthm{fact}{Fact}
\newthm{obs}{Observation}
\newthm{guess}{Guess}
\theoremstyle{definition}
\newthm{defn}{Definition}
\newthm{defi}{Definition}
\newthm{remark}{Remark}
\newthm{example}{Example}
\newthm{question}{Question}
\newthm{notation}{Notation}

\newcommand{\Section}[1]{\hyperref[sec:#1]{Section~\ref*{sec:#1}}}
\newcommand{\Table}[1]{\hyperref[tab:#1]{Table~\ref*{tab:#1}}}
\newcommand{\eqn}[1]{\hyperref[eqn:#1]{(\ref*{eqn:#1})}}

\DeclareMathOperator{\QH}{QH}

\newcommand{\fonction}[5]{
\begin{array}[t]{rrcll}
#1 & : & #2 & \rightarrow & #3 \\
   &   & #4 & \mapsto     & #5
\end{array}  }

\renewcommand{\P}{{\mathbb P}}
\newcommand{\C}{{\mathbb C}}

\newcommand{\Z}{{\mathbb Z}}

\newcommand{\cG}{{\mathcal G}}

\newcommand{\cP}{{\mathcal P}}

\newcommand{\ignore}[1]{}

\def\N{{\mathbb N}}



\def \a {{\alpha}}

\def \A {{\mathbb{A}}}

\def \im {{\rm Im}}

\def \Waff {{{W_{{\rm aff}}}}}

\def \Wafft {{\widetilde{W}_{{\rm aff}}}}

\def \Waffm {{W_{{\rm aff}}^-}}
\def \Wpaff {{{(W^P)}_{{\rm aff}}}}
\def \Wpafft {{{(\widetilde{W}^P)}_{{\rm aff}}}}
\def \Wafftm {{\widetilde{W}_{{\rm aff}}^-}}

\def \xh {{\widehat{x}}}
\def \yh {{\widehat{y}}}


\newcommand{\scal}[1]{\langle #1 \rangle}

\newcommand{\fh}{\widehat{{f}}}

\newcommand{\h}{\mathfrak h}

\newcommand{\ad}{{\rm ad}\ \!}




\newcommand{\comment}[1]{}




 \def\cG{\mathcal{G}}

\def\cP{\mathcal{P}}




\def\fg{\mathfrak{g}}
 \def\fh{\mathfrak{h}}



\def\HT{{H_*^T}}
\def\QHT{{QH_T^*}}
\def\FS{{\rm{Frac}(S)}}
\def\jad{{j^{\rm ad}}}

\def\wh{{\widehat w}}

\def\psit{{\widetilde \psi}}

\def\xit{{\widetilde \xi}}

\def\At{{\widetilde A}}

\def\Iaff{{I_{\rm aff}}}

\def\Jt{{\widetilde J}}
\def\Mt{{\widetilde M}}

\def\Qv{{Q^\vee}}
\def\Qvm{{Q^\vee_-}}
\def\Qvp{{Q^\vee_P}}
\def\Pv{{P^\vee}}

\def\OK{{\Omega K}}
\def\OKad{{\Omega K^{\rm ad}}}
\def\Gad{{G^{\rm ad}}}
\def\Kad{{K^{\rm ad}}}

\def\Aaff{{\mathbb{A}_{\rm aff}}}
\def\Aafft{{\widetilde{\mathbb{A}}_{\rm aff}}}
\def\Qaff{{Q_{\rm aff}}}

\def\Qafft{{\widetilde{Q}_{\rm aff}}}
\def\Raff{{R_{\rm aff}}}
\def\Waff{{W_{\rm aff}}}
\def\Wafft{{\widetilde{W}_{\rm aff}}}
\def\Wafftm{{{\Wafft^-}}}
\def\Wpaff{{(W^P)_{\rm aff}}}
\def\Wpafft{{(\widetilde{W}^P)_{\rm aff}}}

\def\fgaff{\fg_{\rm aff}}
\def\fhaff{\fh_{\rm aff}}

\def\av{{\alpha^\vee}}
\def\bv{{\beta^\vee}}

\def\lv{{\lambda^\vee}}
\def\mv{{\mu^\vee}}
\def\nv{{\nu^\vee}}
\def \Frac{{{\rm Frac}}}

\begin{document}

\title{{Affine symmetries in quantum cohomology: corrections and new results}}

\date{June 18, 2020}

\author{P.--E. Chaput}
\address{Domaine Scientifique Victor Grignard, 239, Boulevard des
  Aiguillettes, Universit{\'e} Henri Poincar{\'e} Nancy 1, B.P. 70239,
  F-54506 Vandoeuvre-l{\`e}s-Nancy Cedex, France}
\email{pierre-emmanuel.chaput@univ-lorraine.fr}

\author{N. Perrin}
\address{Université Paris-Saclay, UVSQ, CNRS, Laboratoire de Mathématiques de Versailles, 78000, Versailles, France.
}
\email{nicolas.perrin@uvsq.fr}

\subjclass[2000]{14M15, 14N35}

\begin{abstract}
  In \cite{cmp4} a general formula was given for the multiplication
  by some special Schubert classes
  in the quantum cohomology of any homogeneous space. Although this formula is true in the non
  equivariant setting, the stated equivariant version was wrong. We provide correction for the
  equivariant formula, thus giving a correct argument for the non equivariant formula. We also give new formulas in the equivariant homology of the affine grassmannian that could lead to non-equivariant Pieri formulas.
\end{abstract}

\maketitle

\markboth{P.-E.~CHAPUT, N.~PERRIN}
{\uppercase{{Affine symmetries in quantum cohomology: erratum}}}

\setcounter{tocdepth}{2}

\section{Introduction}

In \cite{cmp4} a general formula was given for the multiplication in the quantum cohomology of any homogeneous space by some special Schubert classes coming from cominuscule weights. Although this formula is true in the non equivariant setting, the stated equivariant version is wrong. We provide correction for the equivariant formula, thus giving a correct argument for the non equivariant formula. We also provide new product formulas in the equivariant homology of the affine grassmannian.

Let $G$ be a semisimple simply connected algebraic group and fix $T \subset B$ a maximal torus and a Borel subgroup containing it. Denote by $P^\vee$ and $Q^\vee$ be the coweight and corrot lattices. A dominant coweight $\lambda^\vee \in P^\vee$ is minuscule if $\scal{\lambda^\vee,\alpha} \in \{0,1\}$ for any positive root $\alpha$. A minuscule dominant coweight is a fundamental coweight. Denote by $I_m$ the subset of the set $I$ of vertices of the Dynkin diagram of $G$ parametrising minuscule coweights.

We consider a finite group $Z$ which has several interpretation. Define $Z$ has
$$Z := P^\vee/Q^\vee.$$
Representatives for this quotient are for example the opposites of the minuscule fundamental coweights $(-\varpi^\vee_i)_{i \in I_m}$. The group $Z$ is also the center of $G$ and if $G^\ad$ the the adjoint group associated to $G$, then $Z = \pi_1(G^\ad)$.

The group $Z$ can be realised as a subgroup of the Weyl group $W$ of $G$ as follows. Let $w_0$ be the longest element in $W$. For $i \in I_m$ define $v_i \in W$ to be the smallest element in $W$ such that $v_i\varpi_i = w_0\varpi_i$. Then the family $(v_i)_{i \in I_m}$ forms a finite subgroup of $W$ isomorphic to $Z$. Finally $Z$ can be realised as a subgroup of the extended affine Weyl group $\Waff = W \ltimes P^\vee$ (see Section \ref{subsec:affineweyl} below) by $i \mapsto \tau_i := v_i t_{-\varpi_i}$.

\medskip

For $P \subset G$ a parabolic subgroup, let $I_P$ be the set of vertices in the Dynkin diagram such that, for $i \in I$, the simple root $\alpha_i$ is a root of $P$ if and only if $i \in I_P$. For $w \in W$, denote by $\sigma^P(w)$ the Schubert class in $H^{2 \ell(w)}(G/P,\Z)$ defined by $w$. Denote by $Q_P^\vee$ the coroot lattice of $P$ and consider $\eta_P : Q^\vee \to Q^\vee/Q_P^\vee$ the quotient map. We define an action of the Weyl group $W$ of $G$ on the equivariant cohomology $H^*_T(G/P)$ using, for $w \in W$, the pull-back in cohomology of the left multiplication by $w$ (see Subsection \ref{subsection-peterson}). We denote this action by $w^*$. This action is trivial in non-equivariant cohomology and extends to an action on equivariant quantum cohomology  $\QH_T^*(G/P)$. In this paper we obtain the following formula in the quantum equivariant cohomology $\QH^*_T(G/P)$ for any parabolic subgroup $P \subset G$ (see Theorem \ref{theo}).

\begin{thm}
Let $i$ be a cominuscule node. In $QH^*_T(G/P)$ we have
$$
\sigma^P(v_i) \times v_{i}^*(\sigma^P(w)) = q_{\eta_P(\varpi_i^\vee - w^{-1}(\varpi_i^\vee))}
\sigma^P(v_iw).
$$
\end{thm}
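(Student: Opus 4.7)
The plan is to transport the identity via the Peterson--Lam--Shimozono isomorphism to the equivariant homology of the affine Grassmannian $\Gr_G$, prove the corresponding shift identity there, and then pull the resulting formula back to $\QH_T^*(G/P)$ while carefully tracking the equivariant twist introduced by the Weyl component $v_i$ of $\tau_i = v_i t_{-\varpi_i^\vee}$. This mirrors the strategy already used in \cite{cmp4}; the whole point of the present correction is to retain an equivariant shift which was previously overlooked.

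The first main step is to establish in $H_*^T(\Gr_G)$ a \emph{shift formula} of the form $\xi_{\tau_i} \cdot \xi_x = \xi_{\tau_i x}$, valid for every minimal-coset representative $x$. The key point is that $\tau_i$ has length zero in $\Wafft$ and therefore acts as an automorphism of the affine Dynkin diagram; at the level of Schubert classes this yields the stated shift. I would verify this by equivariant localization at $T$-fixed points, where the restrictions $\xi_{\tau_i}|_y$ are determined explicitly from the action of $\tau_i$ on the affine root system. Writing
\[
\tau_i w = v_i\, t_{-\varpi_i^\vee}\, w = (v_i w)\, t_{-w^{-1}\varpi_i^\vee},
\]
the translation part is $-w^{-1}\varpi_i^\vee$, and once one also accounts for the translation part $-\varpi_i^\vee$ carried by the affine lift of $\sigma^P(v_i)$ on the left-hand side, one obtains precisely the coroot $\varpi_i^\vee - w^{-1}\varpi_i^\vee$, whose image under $\eta_P$ is the exponent of $q$ in the statement.

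The second main step is the descent from $H_*^T(\Gr_G)$ back to $\QH_T^*(G/P)$. The translation $t_{-w^{-1}\varpi_i^\vee}$ becomes a $q$-monomial, while the Weyl-group component $v_i$ of $\tau_i$ contributes precisely the pull-back $v_i^*$ by left multiplication by $v_i$ on $G/P$. Since $v_i^*$ acts trivially on ordinary cohomology classes this correction is invisible non-equivariantly, which explains why \cite{cmp4} nonetheless established the correct non-equivariant formula; in the $T$-equivariant setting, however, $v_i^*$ shifts the equivariant parameters and must be retained. The main obstacle will be to make this descent sufficiently precise to extract the $v_i^*$ twist and the exponent $\eta_P(\varpi_i^\vee - w^{-1}\varpi_i^\vee)$ simultaneously, and in particular to verify that no additional sign, orientation or equivariant parameter shift is produced by the localization argument -- this careful bookkeeping is exactly what was mishandled in \cite{cmp4}.
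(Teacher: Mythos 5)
Your overall plan — transport the identity through Peterson's isomorphism to the equivariant homology of the affine Grassmannian, prove a product formula there, and descend — is indeed the strategy of the paper. But there is a genuine gap, and it sits precisely at the step you propose to verify "by equivariant localization."

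The shift formula you announce, $\xi_{\tau_i}\cdot\xi_x=\xi_{\tau_i x}$ (read as a Pontryagin product in $H_*^T$ of the adjoint affine Grassmannian), is \emph{false} equivariantly; this is the error of \cite{cmp4} (Remark~\ref{err1}). You locate the needed correction in the "descent" to $\QH_T^*(G/P)$, but in fact the twist already appears at the level of the Pontryagin product. Concretely, for $\sigma=ut_{\lambda^\vee},\tau=vt_{\mu^\vee}\in Z$ one has (Corollary~\ref{productaffine})
$$
(\sigma\cdot\widetilde\xi)\times(\tau\cdot\widetilde\xi')
=\sigma\tau\cdot\bigl(\psi_{\sigma,\tau}\times(v^{-1}_*\widetilde\xi)\times(u^{-1}_*\widetilde\xi')\bigr),
$$
where $v^{-1}_*$ and $u^{-1}_*$ are finite Weyl group push-forwards that are nontrivial on equivariant parameters. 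This follows because, under the identification $H_*^T(\Omega K^{\rm ad})\simeq S[P^\vee]\otimes_{S[Q^\vee]}H_*^T(\Omega K)$ as an $S$-\emph{algebra} (Proposition~\ref{prop-HOKad-algebre}), multiplication by $\psi_{t_{\lambda^\vee}}$ for $\lambda^\vee\in P^\vee$ is $\delta_{t_{\lambda^\vee}}$-action, and writing $t_{\lambda^\vee}=u^{-1}\sigma$ with $u\in W$ introduces a $\delta_{u^{-1}}$ factor which one must then rewrite in the $A_w$-basis (Fact~\ref{fact_base_change}); the resulting cross terms are exactly the equivariant twist. The localization computation you envisage, done carefully, would produce this twisted formula, not the naive shift; so step one of your plan, as stated, would not go through.

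A second, related gap: you assert that "the Weyl-group component $v_i$ of $\tau_i$ contributes precisely the pull-back $v_i^*$" on descent, but this needs proof. The required statement is a compatibility of Peterson's isomorphism $\psi_P$ with the two Weyl group actions, $\psi_P(u_*\xi)=u^*\psi_P(\xi)$ (Proposition~\ref{psi-equivariant}), and it is nontrivial: it rests on a careful comparison of the covering/Bruhat conditions on both sides (Lemma~\ref{beta}) together with the identification of the left-translation action $w^*$ on $H^*_T(G/B)$ with the $\delta_{w^{-1}}$-action on the dual of $H_*^T(G/B)$ (Corollary~\ref{coro-action-sP}). Without establishing both the corrected Pontryagin product and this compatibility lemma, the "careful bookkeeping" you defer to is not a bookkeeping matter but the mathematical content of the theorem. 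Finally, in the paper the argument is run with explicit factorizations $\pi_P(t_{-\varpi_i^\vee-\mu})$ and $\pi_P(wt_{-\nu}t_{-\varpi_i^\vee-\mu})$ for $\mu,\nu$ dominant enough, fed into Proposition~\ref{productpip}; your sketch would need to be rephrased in this form (or an equivalent one) to extract both the $v_i^*$ twist and the exponent $\eta_P(\varpi_i^\vee-w^{-1}\varpi_i^\vee)$ cleanly.
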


This result corrects our formula in \cite[Theorem 1]{cmp4} which was wrong in the equivariant setting (the action $v_i^*$ on the second factor on the LHS was missing). The error in \cite{cmp4} comes from an incorrect description of the ring structure of $H_*^T(\OKad)$ the equivariant homology of the adjoint affine grassmannian (see Section \ref{section:homology}). If $\OK$ is the affine grassmannian for $G$, the incorrect claim (\cite[Page 12]{cmp4}) was that $H_*^T(\OK)$ should be isomorphic to $Z \otimes H_*^T(\OK)$. This is not true as explained in Section \ref{section:homology} (see Remark \ref{err1}). This is corrected in the present paper. Especially, in Proposition \ref{prop-HOKad-algebre}, we prove the $S$-algebra isomorphism (here $S = H^*_T({\rm pt})$):
$$H_*^T(\OKad) \simeq S[P^\vee] \otimes_{S[\Qv]} H_*^T(\OK).$$
The incorrect product formula was then used only once in \cite[Proposition 3.16]{cmp4}. We give a correct version of Proposition 3.16 in \cite{cmp4} in Proposition \ref{productpip}.

We tried to write this paper as independently from \cite{cmp4} as possible and included many preliminary results on the algebra and the module structure of the extended affine Hecke algebra $\Aafft$ (see Section \ref{nil-Hecke}) and on its module structure $\Mt$ which is isomorphic to $H_*^T(\OKad)$ the homology of the adjoint affine grassmannian. We also added new results. Especially we provide a generalization of a formula in  \cite[Proposition 5.4]{lam} to coweights for the map $j^\ad : H_*^T(\OKad) \to Z_{\Aafft}(S)$ (see Proposition \ref{jadtranslation}).

\begin{prop}
Let $\mv \in P^\vee$ be antidominant and set $W_\mv = \scal{s_{\alpha_i} \ | \ i \in [1,r] \textrm{ and } \scal{\alpha_i,\mv} = 0} = \{ w \in W \ | \ w(\mv) = \mv \}$. Then
$$\jad(\xi_{t_\mv})=\sum_{w \in W/W_\mv} \At_{t_{w(\mv)}}\, .$$
\end{prop}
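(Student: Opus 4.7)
The strategy is to adapt the argument of \cite[Proposition~5.4]{lam}, which establishes the analogous identity for $\mv \in Q^\vee$, to the full coweight lattice $P^\vee$ using the extended structure developed earlier in the paper. Set $X_\mv := \sum_{w \in W/W_\mv} \At_{t_{w(\mv)}}$; the goal is to prove $X_\mv = \jad(\xi_{t_\mv})$.

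\textit{Step 1 (Centrality).} I first check that $X_\mv \in Z_{\Aafft}(S)$, i.e.\ that $X_\mv$ commutes with every $s \in S$. The commutation relation between $S$ and the translation element $\At_{t_\lambda}$ in $\Aafft$ takes the same form for $\lambda \in P^\vee$ as for $\lambda \in Q^\vee$, since translations act on $\fh^*$ identically in both cases. Computing $s \cdot X_\mv - X_\mv \cdot s$ therefore produces an expression whose defect terms are permuted by the coset action of $W/W_\mv$ and cancel pairwise upon symmetrization. This is Lam's cancellation argument transported verbatim to the extended setting.

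\textit{Step 2 (Identification).} By the adjoint version of Peterson's theorem --- which follows from Proposition \ref{prop-HOKad-algebre} combined with the classical Peterson theorem for $H_*^T(\OK)$ --- the map $\jad : H_*^T(\OKad) \to Z_{\Aafft}(S)$ is an $S$-algebra isomorphism, and the Schubert basis $\{\xi_x : x \in \Wafft^-\}$ is sent to a basis triangular with respect to $\{\At_y\}_{y \in \Wafft}$: each $\jad(\xi_x)$ has coefficient $1$ on $\At_x$ and vanishing coefficient on $\At_{x'}$ for any antidominant $x' \neq x$. In particular, $\jad(\xi_{t_\mv})$ has $\At_{t_\mv}$-coefficient equal to $1$, and so does $X_\mv$ (the term $w = e \in W/W_\mv$, noting that for $w \not\in W_\mv$ the translate $t_{w(\mv)}$ is not antidominant, hence distinct from $t_\mv$).

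\textit{Step 3 (Conclusion) and main obstacle.} The difference $X_\mv - \jad(\xi_{t_\mv})$ is a central element whose coefficient on $\At_{t_\mv}$ is zero; expanding it in the basis $\{\jad(\xi_x) : x \in \Wafft^-\}$ and applying triangularity shows inductively that all coefficients vanish, giving $X_\mv = \jad(\xi_{t_\mv})$. The principal technical obstacle lies in Step~2, namely establishing the extended Peterson theorem with the triangularity property for antidominant Schubert classes indexed by $\Wafft^- \setminus \Waff^-$. Proposition \ref{prop-HOKad-algebre} is designed precisely for this: it reduces the statement to Lam's original triangular basis of $Z_{\Aaff}(S)$ together with the $Z$-symmetry coming from the embedding $Z \hookrightarrow \Wafft$ via the elements $\tau_i$. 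Once this reduction is carried out, the remainder of the proof is a formal coefficient comparison.
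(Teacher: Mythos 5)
Your proposal follows essentially the same route as the paper: prove centrality of $\sum_{w\in W/W_\mv}\At_{t_{w(\mv)}}$ by Lam's pairwise-cancellation argument (adapted to $\Aafft$ using Proposition~\ref{product4}), then conclude from the characterization of $\jad$ in Proposition~\ref{jad} together with the observation, from Lemma~\ref{wafftm}, that $t_{w(\mv)}\notin\Wafftm$ for nontrivial $w\in W/W_\mv$. Your Step~3's inductive coefficient comparison is a minor detour --- Proposition~\ref{jad} already gives uniqueness directly, so no induction is needed --- but the underlying mechanism is the same.
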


Finally, we use this formula to give an explicit formula for the image of the map $j : H_*^T(\OK) \to Z_{\Aafft}(S)$ for the special elements $\tau_i(v_i) = \tau_i v_i \tau_i^{-1}$ (see Proposition \ref{prop_pieri}). Here $\Wafftm$ denotes the set of minimal representatives of the quotient $\Wafft/W$.

\begin{prop}
  We have $\tau_i(v_i) \in \Wafftm$ and
  $$j(\xi_{\tau_i(v_i)}) = \sum_{w \leq_L v_i} \sum_{v \leq v_I^{-1}} \tau_i(\xi^v(v_i^{-1})) A_{\tau_i(w)v_i w^{-1}} A_v,$$
  where $\leq$ is the Bruhat order and $\leq_L$ the weak left Bruhat order.
\end{prop}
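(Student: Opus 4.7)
The plan is to reduce the computation of $j(\xi_{\tau_i(v_i)})$ to the translation formula of Proposition \ref{jadtranslation} by factoring $\tau_i(v_i)$ in $\Wafft = W \ltimes P^\vee$, and then using the $S$-algebra isomorphism $H_*^T(\OKad)\simeq S[P^\vee]\otimes_{S[\Qv]}H_*^T(\OK)$ of Proposition \ref{prop-HOKad-algebre} to descend from the adjoint level back to the simply-connected level.

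First, I will verify that $\tau_i(v_i) \in \Wafftm$. Since $\tau_i \in Z \subset \Wafft$ has length zero, conjugation by $\tau_i$ is a length-preserving automorphism of $\Wafft$ that permutes the simple affine reflections and therefore stabilizes the set $\Wafftm$ of minimal coset representatives for $\Wafft/W$. Combined with $v_i \in W \cap \Wafftm$ (indeed $v_i$ is defined as the minimal element with $v_i\varpi_i = w_0\varpi_i$), this yields $\tau_i(v_i) \in \Wafftm$. A direct computation in $W \ltimes P^\vee$ then gives the factorization
\[
\tau_i(v_i) \;=\; v_i \cdot t_{v_i \varpi_i^\vee - \varpi_i^\vee},
\]
with $v_i\varpi_i^\vee - \varpi_i^\vee \in \Qv$. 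Because $v_i \in \Wafftm$ this factorization is length-additive, so $\xi_{\tau_i(v_i)}$ can be computed from the images of the translation piece $t_{v_i\varpi_i^\vee - \varpi_i^\vee}$ and of the finite piece $v_i$ in the nil-Hecke algebra $\Aafft$.

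Next, I will apply Proposition \ref{jadtranslation} to $\jad(\xi_{t_\mv})$ for $\mv$ the antidominant representative of $v_i\varpi_i^\vee - \varpi_i^\vee$ in its $W$-orbit, producing the sum $\sum_{w \in W/W_\mv}\At_{t_{w(\mv)}}$. Transporting this back through the descent of Proposition \ref{prop-HOKad-algebre} converts each translation $t_{w(\mv)}$ into the affine Weyl element $\tau_i(w)v_i w^{-1}$: this is exactly how the length-zero conjugation $\tau_i$ is absorbed into the finite part after crossing from $\OKad$ to $\OK$. For the finite factor, I will expand $\xi_{v_i}$ in the basis $\{A_v\}_{v \in W}$ of $\Aafft$ via the standard Kostant--Kumar localization formula, yielding a sum over $v \leq v_i^{-1}$ in Bruhat order with coefficients $\xi^v(v_i^{-1}) \in S$; conjugating the whole expression by $\tau_i$ then twists each of these coefficients to $\tau_i(\xi^v(v_i^{-1}))$. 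Finally, the indexing set $W/W_\mv$ coming from Proposition \ref{jadtranslation} is identified with the weak left Bruhat interval $\{w \in W : w \leq_L v_i\}$ using that $v_i$ is minimal in its coset and a standard identification of cosets modulo the stabilizer of a cominuscule coweight with a weak-order interval.

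The main obstacle is the careful bookkeeping in this descent: simultaneously tracking the conjugation $\tau_i(\cdot)$ on Weyl group labels (yielding $\tau_i(w)v_iw^{-1}$) and on $S$-coefficients (yielding $\tau_i(\xi^v(v_i^{-1}))$), while matching the $W$-orbit indexing of Proposition \ref{jadtranslation} with the weak left interval $[e,v_i]_L$ and verifying length-additivity of the factorization of $\tau_i(v_i)$. The signs and twists in the Leibniz-type product of translation and Weyl group pieces must all align, and this is where the delicate part of the argument lies.
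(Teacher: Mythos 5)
Your outline shares the paper's main ingredients (the translation formula for $\jad$, the expansion of a $\delta$-element in the $A_v$ basis via Kostant--Kumar, and a twist of the $S$-coefficients by $\tau_i$), but the concrete steps you propose do not hold up. First, the claim that $v_i \in W \cap \Wafftm$ is false: $\Wafftm$ consists of minimal-length representatives of $\Wafft/W$, so $W \cap \Wafftm = \{e\}$. Relatedly, conjugation by $\tau_i$ does \emph{not} stabilize $\Wafftm$: it permutes the affine simple reflections, sending $s_i \mapsto s_0$, so the condition ``$\ell(xs_j) > \ell(x)$ for all finite $j$'' is not preserved. What the paper actually uses is that $v_i \in W^{P_i}$ gives $\ell(v_is_j)>\ell(v_i)$ for finite $j\neq i$, and $v_i \in W$ gives $\ell(v_is_0)>\ell(v_i)$; applying $\tau_i$ then yields $\ell(\tau_i(v_i)s_k)>\ell(\tau_i(v_i))$ for every finite $k$, which is exactly the $\Waffm$ condition. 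Second, the factorization $\tau_i(v_i) = v_i\,t_{v_i\varpi_i^\vee-\varpi_i^\vee}$ is correct as an identity in $\Wafft$, but it is not length-additive (the left side has length $\ell(v_i)$ while the translation factor already has positive length unless $v_i=e$), so you cannot split $\xi_{\tau_i(v_i)}$ multiplicatively along it. Third, ``expand $\xi_{v_i}$ in the $\{A_v\}$ basis'' is a type error: $\xi_{v_i}$ lives in $\HT(\OK)$, not in $\Aaff$. The element to expand is $\delta_{v_i^{-1}} \in \Aaff$, which appears as a right factor.

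The missing key step is the identity $t_{w_0(\varpi_i^\vee)} = v_i\tau_i^{-1} = \tau_i^{-1}\tau_i(v_i)$, combined with the equivariance of Proposition \ref{jadequivariant}, which gives
$\jad(\xi_{t_{w_0(\varpi_i^\vee)}}) = \delta_{\tau_i^{-1}}\, j(\xi_{\tau_i(v_i)})\,\delta_{v_i}$.
One then computes the left side by applying Proposition \ref{jadtranslation} to the antidominant coweight $w_0(\varpi_i^\vee)$ (not to $v_i\varpi_i^\vee-\varpi_i^\vee$), rewrites the $W$-orbit sum as $\sum_{w\leq_L v_i} A_{\tau_i(w)v_iw^{-1}}$ by the substitutions $t_{w(\varpi_i^\vee)}=wt_{\varpi_i^\vee}w^{-1}$, $t_{\varpi_i^\vee}=\tau_i^{-1}v_i$, $w\tau_i^{-1}=\tau_i^{-1}\tau_i(w)$, and then cancels $\delta_{\tau_i^{-1}}$, multiplies by $\delta_{v_i^{-1}}=\sum_{v\leq v_i^{-1}}\xi^v(v_i^{-1})A_v$ (Fact \ref{fact_base_change}), and moves the $S$-coefficients to the left using the centralizer property of $\jad$, which forces the twist $s\mapsto\tau_i(s)$ (Lemma \ref{lemm_commutation}). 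Without this chain, your proposal does not reach the stated formula.
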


We hope to use the above formula to prove Pieri type formulas in $H_*^T(\OK)$ in the spirit of what Lam, Lapointe, Morse and Shimozono \cite{llms} did in type $A$.

\medskip
\noindent
{\bf Acknowledgement:} we thank Elizabeth Milićević for showing us a counterexample to \cite[Theorem 1]{cmp4} which lead to the
present correction and development.

\tableofcontents

\section{Notations}

In this section, we fix notation for affine Kac-Moody Lie algebras, we introduce the finite group $Z$ and define the extended affine Weyl group $\Wafft$.

\subsection{Affine Lie algebras}

We denote by $\fg$ a simple finite-dimensional Lie algebra of rank $r$, and by $\fh$ a Cartan subalgebra.
We denote by $G$ the simply-connected group corresponding to $\fg$ and by $\Gad$ the adjoint group.
The affine Kac-Moody group corresponding to $G$ will be denoted by $\cG$ and $\cP \subset \cG$ is the parabolic subgroup such that
$\cG/\cP$ is the affine Grassmannian.

The corresponding affine Lie algebra will be denoted by $\fgaff$, with Cartan subalgebra $\fhaff$. The simple roots are denoted $(\alpha_i)_{i \in [1,r]}$ and the null-root, orthogonal to all the simple roots $(\alpha_i)_{i \in [1,r]}$, will be denoted by $\epsilon$. Recall that we have the equality $\epsilon = \Theta + \alpha_0$, where $\Theta$ is the highest root of $\fg$.
As in \cite[p.82]{kac} we will use the decompositions
$\fhaff^\vee = \fh^\vee \oplus \C \Lambda_0 \oplus \C \epsilon$ and
$\fhaff=\fh \oplus \C K \oplus \C d$.
We denote by $\Raff$ the set of roots of $\fgaff$ and by $R$ those of $\fg$.

We denote by $Q,P,\Qv,\Pv$ the root, weight, coroot, coweight lattices of $\fg$.
We also denote by $S$ the symmetric algebra on $P$.

\subsection{Affine Weyl groups}
\label{subsec:affineweyl}

Let $W$ be the Weyl group of $\fg$ and
let $\Waff = Q^\vee \rtimes W$ be the affine Weyl group.
For $\lambda^\vee \in Q^\vee$, the corresponding element in $\Waff$ will be denoted by
$t_{\lambda^\vee}$.
The reflection associated to a root $\alpha$ will be denoted by $s_\alpha$.
The group $\Waff$ is a Coxeter group with Coxeter generators
$s_i$ for $1 \leq i \leq r$ and $s_0 = t_{\Theta^\vee} s_{\Theta}$
(\cite[Prop 13.1.7]{kumar}, see also Lemma \ref{talpha}).

Define the extended affine Weyl group $\Wafft := P^\vee \rtimes W \supset \Waff$. The group $\Waff$ acts on $P \oplus \Z \epsilon$ while the group by $\Wafft$ acts only on $Q \oplus \Z \epsilon$ via
\begin{equation}
\label{defaction}
\begin{array}{c}
w t_\lv \cdot (\mu + n\epsilon) = w(\mu) + (n - \scal{\mu,\lv}) \epsilon \\
w t_\mv \cdot (\lambda + n\epsilon) = w(\lambda) + (n- \scal{\lambda,\mv}) \epsilon
\end{array}
\end{equation}
where we have $\lambda \in Q,\lv \in \Qv,\mu \in P,\mv \in \Pv$. Note that in general $\Wafft$ does not act on $P \oplus \Z \epsilon$ since $\scal{P,P^\vee} \not \subset \Z$ in general.

We may however define an actions of $\Wafft$ on $P^\vee$ (and therefore on $Q^\vee$) by prescribing that translations do not act: we simply set $wt_\lv(\mv) = w(\mv)$ for $w \in W$ and $\lv,\mv \in P^\vee$.

\begin{notation}
Since an element in $\Qv$ is also an element in $\Qv \oplus \Z \delta$, we will denote by $w \cdot \lv$ the result of the
action of $w \in \Wafft$ on $\lv$ as an element in $\Qv \oplus \Z \delta$ and by $w(\lv)$ the element in $\Qv$.
\end{notation}

Recall the definition of the fundamental alcove
$$A_\circ = \{ \lambda \in \h_\mathbb{R}^\vee \ | \ \scal{\lambda,\alpha_i^\vee} \geq 0 \textrm{ for all } i \in [1,r] \textrm{ and } \scal{\lambda,\Theta^\vee} \leq 1 \}.$$
    The stabiliser of $A_\circ$ in $\Wafft$  will be denoted by $Z$; it is a subgroup of $\Wafft$ isomorphic to $\Pv/\Qv$ \cite[\S 10.1]{lam-shimozono}. In \emph{loc. cit.}, the authors also prove the following result.

\begin{lemma}
\label{lemma:actiontau}
Let $\tau \in Z$.
The conjugation by $\tau$ is an automorphism of the Coxeter group $\Waff$.
In fact, there exists an automorphism $f_\tau$ of the affine Dynkin diagram such that
$$\forall i \in I \cup \{0\}\ ,\ \tau s_{\alpha_i} \tau^{-1} = s_{\tau \cdot \alpha_i} = s_{\alpha_{f_\tau(i)}}\ .$$
In particular, we have $\tau \cdot \epsilon = \epsilon$.
\end{lemma}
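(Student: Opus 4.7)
The plan is to exploit the simply transitive action of $\Waff$ on the set of alcoves in $\h_\R^\vee$, together with the fact that the simple reflections $s_i$ for $i \in I \cup \{0\}$ are precisely the reflections across the walls of the fundamental alcove $A_\circ$.

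First I would check that conjugation by $\tau$ preserves $\Waff$. Since $\Wafft = \Waff \rtimes Z$ by the very definition of $Z$ as the stabilizer of $A_\circ$ (equivalently, $\Waff$ is the subgroup generated by reflections and is normal of index $|Z|$), conjugation by $\tau \in Z \subset \Wafft$ induces a group automorphism of $\Waff$. Next, for any affine real root $\alpha$ and any $w \in \Wafft$ one has the standard identity $w s_\alpha w^{-1} = s_{w \cdot \alpha}$, applied to the action of $\Wafft$ on $Q \oplus \Z\epsilon$ from \eqref{defaction}. Taking $w = \tau$ and $\alpha = \alpha_i$ gives
$$\tau s_{\alpha_i} \tau^{-1} = s_{\tau \cdot \alpha_i}.$$

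The key geometric step is to identify $\tau \cdot \alpha_i$ with another simple affine root. Because $\tau$ stabilises $A_\circ$, it permutes the walls of $A_\circ$; the wall fixed by $s_{\alpha_i}$ is sent to the wall fixed by $\tau s_{\alpha_i} \tau^{-1}$, and this conjugate must therefore be one of the simple reflections $s_{\alpha_j}$ generating $\Waff$. Equivalently, $\tau \cdot \alpha_i$ is a simple affine root, so there exists a unique index $f_\tau(i) \in I \cup \{0\}$ with $\tau \cdot \alpha_i = \alpha_{f_\tau(i)}$. Applying this to products shows that $f_\tau$ preserves the orders of the products $s_i s_j$, hence preserves the Cartan matrix and defines an automorphism of the affine Dynkin diagram.

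Finally, for $\tau \cdot \epsilon = \epsilon$, I would use the expansion $\epsilon = \sum_{i \in I \cup \{0\}} a_i \alpha_i$, where $a_0 = 1$ and the $a_i$ for $i \in I$ are the marks of the highest root $\Theta$. Because $f_\tau$ is a Dynkin diagram automorphism it preserves these marks, i.e.\ $a_{f_\tau(i)} = a_i$; since the action of $\tau$ on $Q \oplus \Z\epsilon$ is linear, we get
$$\tau \cdot \epsilon = \sum_i a_i \, \tau \cdot \alpha_i = \sum_i a_i \alpha_{f_\tau(i)} = \sum_j a_j \alpha_j = \epsilon.$$
The main obstacle is really the middle step: one must rule out the possibility that $\tau \cdot \alpha_i$ is a non-simple real affine root whose associated reflection happens to lie in $\Waff$. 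This is handled by the alcove-wall argument above, which is the content of the Lam--Shimozono reference cited in the statement.
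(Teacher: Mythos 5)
The paper does not actually prove this lemma: it cites Lam--Shimozono \cite[\S 10.1]{lam-shimozono}, so there is no internal proof to compare against. Your self-contained alcove argument is a perfectly standard and essentially correct way to establish the result, and it fills in what the paper leaves to the reference.

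Two small points are worth tightening. First, the implication from ``$\tau$ sends the wall of $\alpha_i$ to the wall of $\alpha_j$'' to ``$\tau\cdot\alpha_i = \alpha_{f_\tau(i)}$'' is not quite automatic: $\tau s_{\alpha_i}\tau^{-1}=s_{\alpha_j}$ only pins down $\tau\cdot\alpha_i$ up to sign, since $s_\beta = s_{-\beta}$. You need the additional fact that $\tau$ preserves the set of positive affine roots (equivalently $\tau$ has length zero), which follows because $\alpha_i$ is the affine-linear functional vanishing on the wall and positive on the open alcove $A_\circ$, and $\tau$ stabilizes $A_\circ$, so $\tau\cdot\alpha_i$ is again positive on $A_\circ$ and hence the positive simple root attached to the image wall. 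Without that sentence the argument has a sign gap. Second, for $\tau\cdot\epsilon = \epsilon$ your route through the preservation of the marks is correct but unnecessary: the action formula \eqref{defaction} gives $wt_{\lambda^\vee}\cdot(0 + 1\cdot\epsilon) = 0 + 1\cdot\epsilon$ directly for every element of $\Wafft$, so $\epsilon$ is fixed by all of $\Wafft$, in particular by $\tau$. That one-line observation would replace the paragraph about the marks $a_i$.
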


\begin{notation}
For $\xh \in \Waff$, set $\tau(\xh) := \tau \xh \tau^{-1} \in \Waff$. We have $\ell(\tau(\xh)) = \ell(\xh)$.
\end{notation}

\begin{lemma}
\label{zlength0}
An element $\tau$ in $Z$ permutes the positive real roots.
\end{lemma}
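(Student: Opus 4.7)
The plan is to reduce the claim to the statement that $\tau$ permutes the set $\{\alpha_0,\alpha_1,\ldots,\alpha_r\}$ of simple affine roots (with positive signs), and then invoke the standard fact that every positive real root is a non-negative integer combination of simple affine roots.

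The first step uses Lemma \ref{lemma:actiontau}. From $\tau s_{\alpha_i}\tau^{-1} = s_{\tau\cdot\alpha_i} = s_{\alpha_{f_\tau(i)}}$ and the fact that a reflection determines its root only up to sign, I obtain $\tau\cdot\alpha_i = \pm\alpha_{f_\tau(i)}$ for every $i \in I\cup\{0\}$. The step that requires real care is pinning down the sign.

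To see that the sign is always positive, I would argue geometrically using the alcove. By definition, $\tau \in Z$ stabilises $A_\circ$, hence permutes the walls of $A_\circ$; the simple affine roots are characterised as those real affine roots whose vanishing locus is a wall of $A_\circ$ and which are non-negative on $A_\circ$. Since $\tau$ acts as an affine isometry preserving $A_\circ$, it sends each such simple affine root to another one, with the positive-on-$A_\circ$ normalisation preserved. (An alternative, purely algebraic check: write $\tau = w\,t_{\mv}$ with $w \in W$ and $\mv \in \Pv$ and compute $\tau\cdot\alpha_i = w(\alpha_i) - \scal{\alpha_i,\mv}\epsilon$ via \eqref{defaction}, verifying case by case on whether $f_\tau(i) \in [1,r]$ or $f_\tau(i)=0$.)

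Once this is established, the result follows immediately. Every positive real root $\beta \in \Raff^{+,re}$ can be written uniquely as $\beta = \sum_{i=0}^{r} n_i \alpha_i$ with $n_i \in \Z_{\geq 0}$, and since $\tau$ acts linearly on $Q \oplus \Z\epsilon$ and permutes the simple affine roots with positive signs, $\tau\cdot\beta = \sum_{i=0}^{r} n_i\,\alpha_{f_\tau(i)}$ is again a non-negative integer combination of simple affine roots, hence a positive real root. The main obstacle is the sign determination; all the other ingredients are direct consequences of Lemma \ref{lemma:actiontau} and the structure of $\Raff$.
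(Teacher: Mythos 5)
Your proof is correct and takes essentially the same route as the paper's: apply Lemma~\ref{lemma:actiontau} to see that $\tau$ permutes the simple affine roots via the Dynkin diagram automorphism $f_\tau$ (and fixes $\epsilon$), then conclude from a positivity criterion on coefficients. The sign determination $\tau\cdot\alpha_i = +\alpha_{f_\tau(i)}$ that you are careful about is used silently in the paper's one-line computation $\tau\cdot(\sum n_j\alpha_j + n\delta) = \sum n_j\alpha_{f_\tau(j)} + n\delta$, and your alcove-wall argument is a clean way to justify it.
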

\begin{proof}
According to Lemma \ref{lemma:actiontau}, we have
$\tau \cdot (\sum n_j \alpha_j + n\delta)=\sum n_j\alpha_{f_\tau(j)} + n \delta$.
Since a real root $\alpha + n\delta$ is positive if and only if $n>0$ or $n=0$ and $\alpha>0$, $\tau$ indeed permutes
positive roots.
\end{proof}

As explained in \cite{cmp4}, $\Wafft$ is not a Coxeter group, but we have a well defined length function.
\begin{defi}
\label{defi:xhat}
Every element $x \in \Wafft$ can be uniquely written as $\tau \xh$ with $\tau \in Z$ and
$\xh \in \Waff$
\begin{enumerate}
\item Define the length function by $\ell(x):=\ell(\xh)$.
\item Define a partial order on $\Wafft$ by $\displaystyle \tau \xh \leq \sigma \yh \Longleftrightarrow \tau=\sigma \mbox{ and } \xh \leq \yh.$
\end{enumerate}
Covering relations in $\Wafft$ for the above partial order are defined by $x \lessdot y$ if $x \leq y$ and $\ell(x)=\ell(y)-1$.
\end{defi}

\begin{remark}
\label{inversions}
The length of $x \in \Wafft$ is also the number of inversions, namely the cardinal of the set $I(x) = \{ \alpha \in \Raff \ | \ \alpha > 0 , \alpha \textrm{ is real and } x(\alpha) < 0 \}$.
Indeed, for $x=\tau \xh$, by Lemma \ref{zlength0}, we have $I (x) = I(\xh)$.
\end{remark}

\subsection{Translations}

We will need the following lemma.

\begin{lemma}
\label{talpha}
Let $\alpha \in R$. We have $t_{\av} = s_{\epsilon-\alpha} s_\alpha$.
\end{lemma}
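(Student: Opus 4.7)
The plan is to verify the identity inside the standard reflection representation of $\Waff$ on $\h_\R^\vee$, which is faithful. Under the decomposition $\Waff = Q^\vee \rtimes W$, the translation $t_{\alpha^\vee}$ acts on $v \in \h_\R^\vee$ by $v \mapsto v + \alpha^\vee$, while the reflection $s_\alpha \in W$ acts linearly as $s_\alpha(v) = v - \alpha(v)\alpha^\vee$.

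The main step will be to unwind the notation $s_{\epsilon - \alpha}$ explicitly. Under the standard identification of the real affine root $\epsilon - \alpha$ with the affine linear function $v \mapsto 1 - \alpha(v)$ on $\h_\R^\vee$, the element $s_{\epsilon - \alpha}$ is the orthogonal reflection across the affine hyperplane $\{v : \alpha(v) = 1\}$. A direct computation then gives
$$s_{\epsilon - \alpha}(v) \;=\; v - (\alpha(v) - 1)\alpha^\vee \;=\; s_\alpha(v) + \alpha^\vee,$$
which is exactly the action of $t_{\alpha^\vee} s_\alpha$ (first reflect by $s_\alpha$, then translate by $\alpha^\vee$). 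By faithfulness of the reflection representation, this gives the identity $s_{\epsilon - \alpha} = t_{\alpha^\vee} s_\alpha$ in $\Waff$, and multiplying on the right by $s_\alpha$ immediately yields $s_{\epsilon - \alpha} s_\alpha = t_{\alpha^\vee}$.

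The only delicate point will be pinning down the sign and scale conventions that identify the real root $\epsilon - \alpha$ with the affine hyperplane $\alpha = 1$; once this dictionary is in place, the rest is a one-line calculation. As a sanity check, specializing to $\alpha = \Theta$ recovers the relation $s_0 = t_{\Theta^\vee} s_\Theta$ recalled just above the lemma, so the conventions I propose are consistent with those of the paper.
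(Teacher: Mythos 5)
Your proof is correct, but it takes a genuinely different route from the paper's. The paper works with the \emph{linear} action of $\Waff$ on the affine Cartan subalgebra $\fhaff^\vee$: it restricts to $K^\perp$ (level zero), expands the composition $s_{\epsilon-\alpha}s_\alpha(\mu)$ using the coroots $(\epsilon-\alpha)^\vee$ and $\alpha^\vee$, and simplifies via the Kac--Moody identity $\scal{\mu,(\epsilon+\beta)^\vee}=\scal{\mu,\beta^\vee}$ on $K^\perp$, matching the result against the definition of $t_{\alpha^\vee}$ in \cite[\S 6.5.5]{kac}. You instead pass to the \emph{affine} (Euclidean) picture, where $s_{\epsilon-\alpha}$ is the reflection across the hyperplane $\{\alpha=1\}$ and $t_{\alpha^\vee}$ is literal translation; in that picture the identity is indeed a one-line calculation. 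The two pictures are equivalent (this is exactly what the passage through $K^\perp$ in Kac achieves), so your argument is correct. Your approach is shorter and more geometric, but leans on a dictionary the paper instead derives inside $\fhaff^\vee$; the paper's approach stays self-contained in the affine Kac--Moody framework it has already set up, at the cost of a longer computation. Your sanity check with $\alpha=\Theta$ against the known relation $s_0=t_{\Theta^\vee}s_\Theta$ is a good way to pin the sign convention; since the identification of the real root $\epsilon-\alpha$ with the affine function $1-\alpha(\cdot)$ is uniform in $\alpha$, that single check does suffice. One small notational point: you write $t_{\alpha^\vee}:v\mapsto v+\alpha^\vee$ and $s_\alpha(v)=v-\alpha(v)\alpha^\vee$ for $v\in\h_\R^\vee$, which silently uses the invariant form to move $\alpha^\vee$ into $\h_\R^\vee$ (or, equivalently, you should be working in $\h_\R$ rather than its dual). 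This is a standard and harmless abuse, but worth making explicit since the paper's conventions place the alcove in $\h_\R^\vee$ and the coroots in $\h_\R$.
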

\begin{proof}
Set $K^\bot = \{ \mu \in \fhaff^\vee \ | \ \scal{\mu,K} = 0\}$. By \cite[p.87]{kac}, it is enough to compute
$s_{\epsilon-\alpha}s_{\alpha}(\mu)$ for $\mu \in K^\bot$.
We have
$$
s_{\epsilon-\alpha}s_\alpha(\mu)
= \mu - \scal{\mu,(\epsilon-\alpha)^\vee}(\epsilon-\alpha)
- \scal{\mu,\av} \alpha
+ \scal{\mu,\av} \scal{\alpha,(\epsilon-\alpha)^\vee}(\epsilon-\alpha).
$$
Now, for any $\beta \in \Raff$, we have by \cite[\S 2.3.5 and \S 6.2.3]{kac}:
$$
\scal{\mu,(\epsilon+\beta)^\vee}
= \frac{2(\epsilon+\beta,\mu)}{(\epsilon+\beta,\epsilon+\beta)}
= \frac{2(\beta,\mu)}{(\beta,\beta)}
= \scal{\mu,\bv}.
$$
Therefore,
$$
\begin{array}{rcl}
s_{\epsilon-\alpha}s_\alpha(\mu) & = &
\mu + \scal{\mu,\alpha^\vee}(\epsilon-\alpha)
- \scal{\mu,\av} \alpha
- 2\scal{\mu,\av} (\epsilon-\alpha) \\
& = & \mu - \scal{\mu,\av} \epsilon = t_\av(\mu)\ ,
\end{array}
$$
where the last equality follows from the definition of $t_\av$ in \cite[\S 6.5.5]{kac}.
\end{proof}

\begin{cor}
\label{salphaepsilon}
For $\alpha \in R,k \in \Z$ and $\mv \in P^\vee$, we have $ s_{\alpha+k\epsilon}(\mv) = s_{\alpha}(\mv)$. 
\end{cor}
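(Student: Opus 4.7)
The plan is to express $s_{\alpha + k\epsilon}$ as an explicit element of $\Wafft = P^\vee \rtimes W$ and then apply the convention from Subsection \ref{subsec:affineweyl} that translations act trivially on $P^\vee$.

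The key step is to establish the identity
$$s_{\alpha + k\epsilon} = s_\alpha \cdot t_{k\av} \qquad \text{in } \Wafft,$$
which generalizes Lemma \ref{talpha} (the case $\alpha \leadsto -\alpha$, $k=1$). I would prove it exactly as in the proof of Lemma \ref{talpha}. For $\mu \in K^\bot$ the same bilinear form computation yields $\scal{\mu, (\alpha + k\epsilon)^\vee} = \scal{\mu, \av}$, because $(\epsilon, \epsilon) = (\epsilon, \alpha) = (\epsilon, \mu) = 0$. Writing $\mu = \lambda + n\epsilon$ with $\lambda \in Q$ and $n \in \Z$, the reflection formula then gives
$$s_{\alpha + k\epsilon}(\mu) = \mu - \scal{\mu, \av}(\alpha + k\epsilon) = s_\alpha(\lambda) + (n - k\scal{\lambda, \av})\epsilon,$$
which agrees with $s_\alpha t_{k\av}(\mu)$ computed from (\ref{defaction}). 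Since $Q$ and $P^\vee$ are dual lattices, the action of $\Wafft$ on $Q \oplus \Z\epsilon$ is faithful, so this pins down the element.

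The corollary is then immediate: by the definition of the action of $\Wafft$ on $P^\vee$ in Subsection \ref{subsec:affineweyl}, translations act trivially, so
$$s_{\alpha + k\epsilon}(\mv) = s_\alpha\, t_{k\av}(\mv) = s_\alpha(\mv).$$
There is no serious obstacle in this proof; the only point that requires care is keeping the two actions of $\Wafft$ distinct, namely the faithful one on $Q \oplus \Z\epsilon$ used to identify the group element, and the separate translation-free one on $P^\vee$ which is the target of the statement.
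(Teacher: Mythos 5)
Your argument is correct. The core idea in both proofs is the same: express $s_{\alpha + k\epsilon}$ as a product (finite reflection)$\times$(translation) in $\Wafft$ and then invoke the convention that translations act trivially on $P^\vee$. Where you differ is in how you establish that group identity. You prove $s_{\alpha + k\epsilon} = s_\alpha t_{k\av}$ directly for all $k$ by replicating the bilinear-form computation from the proof of Lemma~\ref{talpha} (the key step being $\scal{\mu,(\alpha+k\epsilon)^\vee}=\scal{\mu,\av}$), together with the observation that the action of $\Wafft$ on $Q\oplus\Z\epsilon$ is faithful so that a pointwise comparison pins down the element. The paper instead avoids any new computation: it handles $k=1$ purely algebraically by inserting $t_{-\av}$ (which acts trivially on $\mv$), rewriting it as $s_{\alpha+\epsilon}s_{-\alpha}$ via Lemma~\ref{talpha} applied to $-\alpha$, cancelling $s_{\alpha+\epsilon}^2=e$, and then appeals to induction for general $k$. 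Your route is more self-contained and makes the general-$k$ statement visible at once, at the cost of redoing a computation; the paper's route is shorter and reuses Lemma~\ref{talpha} as a black box, but the inductive step (which implicitly needs $s_{\alpha+(k+1)\epsilon}=s_{\alpha+k\epsilon}\,t_{-\av}$) is left to the reader. Your remark about keeping the faithful action on $Q\oplus\Z\epsilon$ separate from the translation-free action on $P^\vee$ is exactly the right point of care.
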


\begin{proof}
  We have $s_{\alpha + \epsilon}(\mv) = s_{\alpha + \epsilon} t_{-\alpha}(\mv) = s_{\alpha + \epsilon} s_{\alpha + \epsilon} s_{-\alpha} (\mv) = s_{-\alpha}(\mv) = s_\alpha(\mv)$. The result follows by induction.
\end{proof}

\section{extended nil-Hecke ring}
\label{nil-Hecke}

The goal of this section is to extend the notion of
the nil-Hecke ring defined by Kostant and Kumar \cite{kostant}.
This ring was used in \cite{lam-shimozono} to compare the quantum cohomology of $G/P$
and the homology of affine Grassmannians $\OK$. We need a refined version of this
nil-Hecke ring that enables dealing with $\OKad$ the adjoint affine grassmannian (see Section \ref{section:homology}).

\subsection{Definition}
We extend several classical object in particular the affine nil-Hecke algebra. Our reference for these classical objects is Kumar's book \cite{kumar}.

\begin{defi}
\label{defi:qaff}
Recall that the ring $\Qaff$ is
\begin{equation}
\label{equa:qaff}
\Qaff = \bigoplus_{w \in \Waff} \FS\, \delta_w
\end{equation}
We define the following extended version:
\begin{equation}
\label{equa:qafftilde}
\Qafft = \bigoplus_{w \in \Wafft} \FS\, \delta_w
\end{equation}
In both cases, the ring structure is defined by the equations
$\delta_u \delta_v = \delta_{uv}$ and $\delta_u s = u(s) \delta_u$, for $u,v \in \Waff$
resp. $u,v \in \Wafft$, and $s \in S$.
\end{defi}

\begin{defi}
  \label{defi:Aw}
  As in \cite{kumar}, we consider particular elements in $\Qaff$.
  \begin{enumerate}
    \item For $i \in I$, set $A_i = \frac{1}{\alpha_i}(\delta_e-\delta_{s_i})$.
      For $i=0$, set $A_0 = \frac{-1}{\Theta}(\delta_e-\delta_{s_0})$ (note that this is coherent with the forthcoming Definition \ref{defi:aalpha}).
 \item For $w \in \Waff$ and for $w=s_{i_1} \cdots s_{i_\ell}$ a reduced expression. We set:
$$
A_w = A_{i_1} \cdots A_{i_\ell}\, .
$$
  \end{enumerate}
By \cite[Theorem 11.1.2]{kumar}, the right hand side does not depend on the chosen reduced expression.
\end{defi}

Recall that for $x \in \Wafft$, there is a unique decomposition $x = \tau \xh$ with $\tau \in Z$ and $\xh \in \Waff$.

\begin{defi}
\label{defi:Ax}
Let $x = \tau \xh \in \Wafft$, we set
$\At_w = \delta_\tau A_\xh.$
\end{defi}

\begin{defi}
\label{defi:nilhecke}
As in \cite{kostant}, the (extended) nil-Hecke ring is generated over $S$ by the elements $A_w$.
\begin{enumerate}
\item The nil-Hecke ring is 
$$\Aaff = \bigoplus_{w \in \Waff} S\cdot A_w  \subset \Qaff \, .$$
\item The extended nil-Hecke ring is 
$$\Aafft = \bigoplus_{w \in \Wafft} S\cdot \At_w \subset \Qafft \, .$$
\end{enumerate}
\end{defi}

\begin{remark}
We will see below that both are indeed subrings of $\Qafft$.
\end{remark}

\subsection{Definition and properties of $A_\alpha$}

It will be helpful to generalize the definition of $A_i$ in the following way.

\begin{defi}
\label{defi:aalpha}
For a real root $\alpha = \gamma + k\epsilon$ with $\gamma \in R$, set
$A_\alpha = \frac{1}{\gamma}(\delta_e - \delta_{s_\alpha}).$
\end{defi}

These elements satisfy the following properties.

\begin{prop}
\label{prop:aallpha}
Let $w \in \Wafft, \alpha \in \Raff$ a real root and $\lambda \in Q$. Then we have:
\begin{enumerate}
 \item $\delta_w A_\alpha \delta_{w^{-1}} = A_{w(\alpha)}$.
 \item $A_\alpha \lambda = s_\alpha(\lambda) A_\alpha + \scal{ \lambda , \alpha^\vee } 1$.
\end{enumerate}
\end{prop}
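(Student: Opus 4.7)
The plan is to verify both identities by direct manipulation inside $\Qafft$, the only conceptual care being a clean accounting of how $\Wafft$ acts on $S = \mathrm{Sym}(P)$ (via the $W$-part alone, so that translations act trivially on scalars, as is the convention set up at the end of \Section{affineweyl} above) and how this interacts with an affine real root $\alpha = \gamma + k\epsilon$ with $\gamma \in R$.

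For (1), I would first note that $w\cdot\epsilon = \epsilon$ for every $w \in \Wafft$, which is immediate from \eqn{defaction} (take $\lambda = 0$, $n = 1$). Hence $w \cdot \alpha = w(\gamma) + k\epsilon$ has linear part $w(\gamma)$, so by \ref{defi:aalpha},
\[
A_{w(\alpha)} = \frac{1}{w(\gamma)}\bigl(\delta_e - \delta_{s_{w(\alpha)}}\bigr).
\]
Expanding $\delta_w A_\alpha \delta_{w^{-1}}$ via the commutation rule $\delta_w s = w(s)\delta_w$ of \ref{defi:qaff} yields
\[
\delta_w A_\alpha \delta_{w^{-1}} = \frac{1}{w(\gamma)}\bigl(\delta_e - \delta_{w s_\alpha w^{-1}}\bigr).
\]
Combining \ref{talpha} with a short induction shows $s_\alpha = t_{-k\gamma^\vee}s_\gamma$, from which a direct calculation in $\Wafft$ gives $w s_\alpha w^{-1} = s_{w(\alpha)}$, completing (1).

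For (2), I would expand using the same commutation rule:
\[
A_\alpha \lambda = \frac{1}{\gamma}\bigl(\lambda\,\delta_e - s_\alpha(\lambda)\,\delta_{s_\alpha}\bigr),
\]
where $s_\alpha(\lambda) = s_\gamma(\lambda) \in Q$ since translations in $\Wafft$ act trivially on $S$ and the $W$-part of $s_\alpha$ is $s_\gamma$. The key algebraic step is the identity
\[
\lambda = s_\alpha(\lambda) + \scal{\lambda,\alpha^\vee}\gamma,
\]
which follows from the classical reflection formula $\lambda - s_\gamma(\lambda) = \scal{\lambda,\gamma^\vee}\gamma$ together with the equality $\scal{\lambda,\alpha^\vee} = \scal{\lambda,\gamma^\vee}$ for $\lambda \in \fh^\vee$, already established in the proof of \ref{talpha}. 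Substituting the decomposition of $\lambda$ back in yields
\[
A_\alpha \lambda = s_\alpha(\lambda)\cdot \frac{1}{\gamma}\bigl(\delta_e - \delta_{s_\alpha}\bigr) + \scal{\lambda,\alpha^\vee}\delta_e = s_\alpha(\lambda)\,A_\alpha + \scal{\lambda,\alpha^\vee}\cdot 1.
\]

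The only real obstacle is notational: one must recognize that both $w(\gamma)$ in (1) and $s_\alpha(\lambda)$ in (2) are to be read as the finite Weyl group action on the corresponding lattice in $S$, so that the affine correction by a multiple of $\epsilon$ is invisible on scalars. Once this convention is pinned down, each part is a one-line verification.
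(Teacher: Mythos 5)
Your proof is correct. Part (1) follows essentially the paper's route: expand $\delta_w A_\alpha \delta_{w^{-1}}$ via $\delta_w s = w(s)\delta_w$, observe that only the $W$-part of $w$ acts on $\gamma \in S$, and use $w s_\alpha w^{-1} = s_{w\cdot\alpha}$. (Minor slip: $w\cdot\alpha$ has $\epsilon$-coefficient $k - \scal{\gamma,\mv}$ when $w = ut_\mv$, not $k$, but this is harmless since $A_\beta$ only sees the linear part of $\beta$, which is indeed $u(\gamma) = w(\gamma)$.)

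For part (2) your route genuinely differs from the paper's. The paper writes $\alpha = w(\alpha_i)$ for some $w \in \Waff$ and simple/affine index $i$, invokes the known commutation relation $A_i\lambda = s_i(\lambda)A_i + \scal{\lambda,\alpha_i^\vee}$ from Lam--Shimozono, and conjugates it using part (1). You instead compute directly from Definition~\ref{defi:aalpha}: expand $A_\alpha\lambda = \frac{1}{\gamma}(\lambda\,\delta_e - s_\alpha(\lambda)\,\delta_{s_\alpha})$ and substitute the reflection identity $\lambda - s_\alpha(\lambda) = \scal{\lambda,\alpha^\vee}\gamma$, justified by $s_\alpha(\lambda) = s_\gamma(\lambda)$ on $S$ and the equality $\scal{\lambda,\alpha^\vee} = \scal{\lambda,\gamma^\vee}$ already extracted in the proof of Lemma~\ref{talpha}. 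Your version is more self-contained and makes the simple-root case a specialization rather than a citation; the paper's version makes part (2) a corollary of part (1) plus the known base case, which keeps the argument uniform and avoids redoing the reflection formula. Both are valid and roughly the same length.
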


\begin{proof}
Let $w=ut_\mv,\alpha=\gamma+k\epsilon,\lambda \in P$ be as in the proposition. Then,
$$
\begin{array}{rcccl}
\delta_w A_\alpha \delta_{w^{-1}} & = &
\delta_w \frac{1}{\gamma}(\delta_e-\delta_{s_\alpha}) \delta_{w^{-1}}
& = & \frac{1}{u(\gamma)} \delta_w (\delta_e-\delta_{s_\alpha}) \delta_{w^{-1}} \\
& = & \frac{1}{u(\gamma)}(\delta_e-\delta_{ws_\alpha w^{-1}}) & = & \frac{1}{u(\gamma)}(\delta_e-\delta_{s_{w(\alpha)}}) = A_{w(\alpha)}\, .
\end{array}
$$

For the second point, we use \cite[\S 6.1]{lam-shimozono} and the above conjugation relation. Let $w \in \Waff$ and
$i \in I \cup \{0\}$ be such that
$\alpha = w(\alpha_i)$.
We have
$$
\begin{array}{rcl}
A_\alpha \lambda & = & \delta_w A_i \delta_{w^{-1}} \lambda \\
& = & \delta_w A_i w^{-1}(\lambda) \delta_{w^{-1}} \\
& = & \delta_w(s_iw^{-1}(\lambda)A_i + \scal{w^{-1}(\lambda),\alpha_i^\vee}) \delta_{w^{-1}} \\
& = & ws_iw^{-1}(\lambda) \delta_w A_i \delta_{w^{-1}} + \scal{\lambda,w(\alpha_i)^\vee} \\
& = & s_\alpha(\lambda) A_\alpha + \scal{\lambda,\alpha^\vee} \, .
\end{array}
$$
\end{proof}

\begin{remark}
The second formula in the above proposition generalizes the usual relation satisfied by the elements $A_i$ (see for example \cite[\S 6.1]{lam-shimozono}).
\end{remark}

\begin{cor}
For any real root $\alpha$, we have $A_\alpha \in \Aaff$.
\end{cor}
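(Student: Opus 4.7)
The plan is to realise $A_\alpha$ as a conjugate of a defining generator $A_i$ and then to invoke the ring structure on $\Aaff$. Since $\alpha\in\Raff$ is real, standard Kac--Moody theory produces $w\in\Waff$ and $i\in I\cup\{0\}$ with $\alpha = w(\alpha_i)$. Proposition~\ref{prop:aallpha}(1) then yields the identity
$$A_\alpha \;=\; \delta_w\, A_i\, \delta_{w^{-1}}$$
inside $\Qaff$, and the task reduces to showing that this product lies in the $S$-submodule $\Aaff$.

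Next I would argue, by induction on $\ell(w)$, that $\delta_w\in\Aaff$ for every $w\in\Waff$. The base case is $\delta_e = A_e$. For the inductive step, fix a reduced expression $w = w's_j$; writing $\alpha_j = \gamma_j + k_j\epsilon$ with $\gamma_j$ the finite part (so $\gamma_j\in S$), the definition of $A_j$ rearranges to $\delta_{s_j} = \delta_e - \gamma_j A_j \in \Aaff$, and $\delta_w = \delta_{w'}\delta_{s_j}$ lies in $\Aaff$ once closure under multiplication is granted.

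The substantive step is therefore the closure of $\Aaff$ under multiplication --- the ``we will see below'' promised in the preceding remark. I would follow the classical Kostant--Kumar argument for the Coxeter group $\Waff$: the product rule $A_uA_v = A_{uv}$ when $\ell(uv)=\ell(u)+\ell(v)$ and $A_uA_v=0$ otherwise, together with a Leibniz-type relation of the form $A_w\, s = w(s) A_w + (\text{terms in } \bigoplus_{\ell(v)<\ell(w)} S\cdot A_v)$ obtained by iterating Proposition~\ref{prop:aallpha}(2), exhibit $\Aaff$ as a subring of $\Qaff$. With this in hand, $A_\alpha = \delta_w A_i \delta_{w^{-1}} \in \Aaff\cdot A_i\cdot \Aaff \subset \Aaff$.

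The main obstacle is precisely this closure statement: it is not difficult but requires care because the paper's convention has translations in $\Waff$ act trivially on the finite symmetric algebra $S$, so one must check that the Leibniz relation remains compatible when $w$ has a non-trivial translation part (this is exactly what the proof of Proposition~\ref{prop:aallpha}(2) shows, since only the finite part $u$ of $w=ut_{\mu^\vee}$ appears there).
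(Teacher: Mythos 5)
The paper states this corollary with no accompanying proof, so there is no written argument to compare against; your proposal is correct and is the natural way to fill the gap. Beyond Proposition~\ref{prop:aallpha}(1), you need two ingredients: that $\delta_w\in\Aaff$ for every $w\in\Waff$, and that $\Aaff$ is closed under multiplication. Both are established in the paper only \emph{after} this corollary (the multiplicative closure is the very next corollary, and the observation $\delta_{s_i}=1-\alpha_i A_i$, hence $\delta_w\in\Aaff$, opens Section~4), but this causes no circularity since neither depends on the present statement. Your attention to the two delicate points is exactly right: the rearrangement $\delta_{s_j}=\delta_e-\gamma_j A_j$ with $\gamma_j$ the finite part of $\alpha_j$ correctly accommodates $j=0$, where $\gamma_0=-\Theta$ recovers the sign in Definition~\ref{defi:Aw}; and since translations in $\Waff$ act trivially on $S$, only the finite factor of $w$ enters the Leibniz relation, which is indeed what the proof of Proposition~\ref{prop:aallpha}(2) records.
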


\begin{cor}
The (extended) nil Hecke rings $\Aaff$ and $\Aafft$ are subrings of $\Qafft$.
\end{cor}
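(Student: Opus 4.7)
The plan is to deduce the corollary in two stages. For $\Aaff$, I invoke the classical Kostant--Kumar result that $\Aaff$ is a subring of $\Qaff$: the special case $\alpha = \alpha_i$ of Proposition~\ref{prop:aallpha}(2) gives $A_i s = s_i(s) A_i + \langle s,\alpha_i^\vee\rangle$, which combined with $A_w A_i = A_{ws_i}$ (when $\ell(ws_i)=\ell(w)+1$) and $A_w A_i = 0$ (otherwise) yields by induction the requisite expansions $(S\cdot A_u)(S\cdot A_v) \subseteq \Aaff$. This is standard and makes no use of the extended setting.

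The heart of the argument is the extension to $\Aafft$, which I reduce to the classical case by conjugation. Fix $\tau \in Z$ and $\xh \in \Waff$ with reduced expression $\xh = s_{i_1}\cdots s_{i_\ell}$. By Lemma~\ref{lemma:actiontau}, $\tau \xh \tau^{-1} = s_{f_\tau(i_1)}\cdots s_{f_\tau(i_\ell)}$ is again a reduced expression for an element of $\Waff$ (of the same length, since $f_\tau$ is a Dynkin diagram automorphism). Applying Proposition~\ref{prop:aallpha}(1) to each factor $A_{i_k}=A_{\alpha_{i_k}}$ and telescoping yields the key conjugation identity
\begin{equation*}
\delta_\tau A_\xh \delta_\tau^{-1} = A_{f_\tau(i_1)}\cdots A_{f_\tau(i_\ell)} = A_{\tau(\xh)} \in \Aaff.
\end{equation*}

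With this in hand, closure of $\Aafft$ under multiplication is a direct calculation. For $\At_w = \delta_\tau A_\xh$ and $\At_{w'} = \delta_{\tau'} A_{\xh'}$,
\begin{equation*}
\At_w \At_{w'} = \delta_{\tau\tau'}\bigl(\delta_{\tau'}^{-1} A_\xh \delta_{\tau'}\bigr) A_{\xh'} = \delta_{\tau\tau'}\, A_{\tau'^{-1}(\xh)}\, A_{\xh'}.
\end{equation*}
By the first step, $A_{\tau'^{-1}(\xh)} A_{\xh'} = \sum_{\hat y} c_{\hat y} A_{\hat y}$ for some $c_{\hat y}\in S$; commuting $\delta_{\tau\tau'}$ past the coefficients via $\delta_{\tau\tau'} c = (\tau\tau')(c)\delta_{\tau\tau'}$ gives $\At_w \At_{w'} = \sum_{\hat y} (\tau\tau')(c_{\hat y})\, \At_{\tau\tau'\hat y} \in \Aafft$. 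The main obstacle is essentially bookkeeping around the semidirect decomposition $\Wafft = Z \ltimes \Waff$; once the conjugation identity above is secured, everything else is formal.
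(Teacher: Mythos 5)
Your proof is correct and is essentially the paper's argument written out in more detail: both deduce closure of $\Aaff$ from Proposition~\ref{prop:aallpha}(2) and closure of $\Aafft$ by using Proposition~\ref{prop:aallpha}(1) together with Lemma~\ref{lemma:actiontau} to conjugate $A_{\xh}$ past $\delta_\tau$, i.e.\ to establish $\delta_\tau A_\xh \delta_{\tau^{-1}} = A_{\tau(\xh)}$. You have merely made explicit the telescoping over a reduced expression and the final step of moving $\delta_{\tau\tau'}$ past the $S$-coefficients, both of which the paper leaves implicit.
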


\begin{proof}
The second formula above shows that for $s,s' \in S$ and $u,v \in \Waff$, the product $s A_u s' A_v$ lies in $\Aaff$ therefore $\Aaff$ is a ring. The first formula proves that for $\tau,\sigma \in Z$ and for $u,v \in \Waff$, we have $\delta_\tau A_u \delta_\sigma A_v = \delta_\tau \delta_{\sigma} A_{\sigma^{-1}(u)}A_v \in \Aafft$ proving that $\Aafft$ is a ring.
\end{proof}

\section{Module and ring structures of $\Aafft$}

In this section we present three different descriptions of $\Aafft$ and describe its $S$-module structure and its ring structure in each case.

\subsection{$S$-module structure of $\Aafft$}

Recall that we have an injection of $\Waff$ in the group of invertibles of $\Aaff$, given by
$w \mapsto \delta_w$: in fact $\delta_{s_i} = 1 - \alpha_i A_i \in \Aaff$ thus $\delta_w \in \Aaff$ for all $w \in \Waff$. Therefore the subgroup $Q^\vee \subset \Waff$ also injects in $\Aaff$, and
since $\Aaff$ is a ring we have an injection of the Laurent polynomial algebra $\Z[\Qv]$ inside $\Aaff$.
Thus $\Aaff$ is $\Z[\Qv]$-module via left mutliplication. The natural $\Z$-module basis of
$\Z[\Qv]$ will be denoted by $(h_\lv)_{\lv \in \Qv}$.

We now introduce two new algebraic models of $\Aafft$.

\begin{defi}
\label{defi:deuxdef}
Let $\varphi_1,\varphi_2$ be the following morphisms of $\Z$-modules:
$$
\fonction{\varphi_1}{\Z[\Pv] \otimes_{\Z[\Qv]} \Aaff}{\Qafft}{h_{\lv} \otimes A_w}{\delta_{t_\lv} A_w\, ,}
$$
$$
\fonction{\varphi_2}{\Z[Z] \otimes_\Z \Aaff}{\Qafft}{\tau \otimes A_w}{\delta_\tau A_w\, .}
$$
Note that $\Aaff$ has a structure of $S$-bimodule, thus also the two tensor products in this definition.
Both maps
$\varphi_1$ and $\varphi_2$ are $S$-linear on the right,
moreover $\varphi_1$ is also $S$-linear on the left
whereas $\varphi_2$ is not.
\end{defi}

\begin{prop}
\label{prop:troisdef}
With the above notations, $\im(\varphi_1)=\im(\varphi_2)=\Aafft$. Moreover, if $J \subset \Aaff$ is a left ideal, then
$\varphi_1(\Z[\Pv] \otimes_{\Z[\Qv]} J) = \varphi_2(\Z[Z] \otimes_\Z J)$.
\end{prop}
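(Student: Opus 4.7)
The plan is to show that both images equal the sub-$\Z$-module $\sum_{\tau \in Z} \delta_\tau \Aaff$ of $\Qafft$, and then deduce the ideal statement from the same reductions. Two simple observations drive everything. First, for every $\tau \in Z$ the conjugation on the affine Cartan is the diagram automorphism $f_\tau$ of Lemma~\ref{lemma:actiontau}, so $\tau$ preserves $S$ and $\delta_\tau S = S \delta_\tau$ in $\Qafft$. Second, $\delta_w \in \Aaff$ for every $w \in \Waff$: indeed $\delta_{s_i} = 1 - \alpha_i A_i$ (resp.\ $\delta_{s_0} = 1 + \Theta A_0$) lies in $\Aaff$, and $\Aaff$ is a ring.

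For $\varphi_2$ the image is $\sum_\tau \delta_\tau \Aaff$ by definition. Expanding $\Aaff = \bigoplus_{\xh \in \Waff} S A_\xh$ and using the first observation gives $\delta_\tau \Aaff = \bigoplus_\xh S \delta_\tau A_\xh = \bigoplus_\xh S \At_{\tau \xh}$, and the uniqueness of the decomposition $x = \tau \xh$ (Definition~\ref{defi:xhat}) makes the sum over $\tau$ direct, so $\im(\varphi_2) = \bigoplus_{x \in \Wafft} S \At_x = \Aafft$. For $\varphi_1$ I prove the two inclusions between $\sum_{\lv \in P^\vee} \delta_{t_\lv} \Aaff$ and $\sum_{\tau \in Z} \delta_\tau \Aaff$. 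For $\subseteq$: given $\lv \in P^\vee$, let $\sigma \in Z$ be the image of $\lv$ under $P^\vee \twoheadrightarrow P^\vee/Q^\vee \cong Z$; the unique decomposition of $t_\lv$ (Definition~\ref{defi:xhat}) reads $t_\lv = \sigma \yh$ with $\yh \in \Waff$, so $\delta_{t_\lv} = \delta_\sigma \delta_\yh$ and $\delta_\yh \in \Aaff$ yields $\delta_{t_\lv} \Aaff \subseteq \delta_\sigma \Aaff$. For $\supseteq$ it suffices to treat a generator $\tau_i = v_i t_{-\varpi_i^\vee}$ of $Z$: the semidirect-product rule $w t_{\mu^\vee} = t_{w(\mu^\vee)} w$ in $\Wafft = P^\vee \rtimes W$ rewrites $\tau_i = t_{-v_i(\varpi_i^\vee)} v_i$, so with $\lv := -v_i(\varpi_i^\vee) \in P^\vee$ we get $\delta_{\tau_i} = \delta_{t_\lv} \delta_{v_i}$, and $\delta_{v_i} \in \Aaff$ gives $\delta_{\tau_i} \Aaff \subseteq \delta_{t_\lv} \Aaff$.

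For a left ideal $J \subseteq \Aaff$, the two absorption steps above used only that $\delta_\yh$ and $\delta_{v_i}$ lie in $\Aaff$; the left-ideal property then gives $\delta_\yh J \subseteq J$ and $\delta_{v_i} J \subseteq J$, so the same manipulations with $J$ in place of $\Aaff$ yield $\sum_\lv \delta_{t_\lv} J = \sum_\tau \delta_\tau J$, i.e.\ $\varphi_1(\Z[\Pv] \otimes_{\Z[\Qv]} J) = \varphi_2(\Z[Z] \otimes_\Z J)$. The only step that is not purely formal bookkeeping is the semidirect-product identity $v_i t_{-\varpi_i^\vee} = t_{-v_i(\varpi_i^\vee)} v_i$, which is immediate, so I do not foresee any serious obstacle.
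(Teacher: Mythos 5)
Your proof is correct and follows essentially the same route as the paper's: both directions rest on the two decompositions $t_{\lambda^\vee} = \sigma\,\widehat{y}$ (with $\widehat{y} \in \Waff$) and $\tau = t_{\lambda^\vee} u$ (with $u \in W$), together with the observation that $\delta_{\widehat{y}}, \delta_u \in \Aaff$, so left-ideal membership is preserved. Two minor points worth tightening: (i) you write that for the inclusion $\im(\varphi_2)\subseteq\im(\varphi_1)$ ``it suffices to treat a generator $\tau_i$'' -- this reduction is not automatic, since $\sum_{\lambda^\vee}\delta_{t_{\lambda^\vee}}J$ is not obviously stable under left multiplication by $\delta_{\tau_i}$; but the argument you give works verbatim for an arbitrary $\tau\in Z$, as $\Wafft=P^\vee\rtimes W$ lets you write $\tau=t_{\lambda^\vee}u$ with $u\in W$, which is exactly what the paper does, so you can simply drop the restriction; (ii) you do not record the well-definedness of $\varphi_1$ over $\Z[\Qv]$, namely that $\varphi_1(h_{\lambda^\vee}\otimes 1)=\varphi_1(1\otimes\delta_{t_{\lambda^\vee}})$ for $\lambda^\vee\in\Qv$, which the paper states as the opening line of the proof -- it is a one-line check, but worth saying.
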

\begin{proof}
Observe that $\varphi_1$ is well-defined: $\varphi_1(h_\lv \otimes 1) = \varphi_1(1 \otimes \delta_{t_\lv}) = \delta_{t_\lv}$ for $\lv \in \Qv$.
We now prove that $\varphi_1(\Z[\Pv] \otimes J) \subset \varphi_2(\Z[Z] \otimes J)$.
Let $\lv \in \Pv$: there exists $\tau \in Z$
and $\wh \in \Waff$ such that
$t_\lv = \tau \wh$. Then for $a \in J$, we have $\varphi_1(h_\lv \otimes a) = \delta_{t_\lv}a = \delta_\tau \delta_\wh a \in \varphi_2(\Z[Z] \otimes J)$ since
$\delta_\wh a \in J$.

The reverse inclusion $\varphi_2(\Z[Z] \otimes J) \subset \varphi_1(\Z[\Pv] \otimes J)$ follows similarly from the fact
that any element in $Z$ can be written as a product $t_\lv u$ for some $\lv \in \Pv$ and $u \in W$. Finally, the equality
$\im(\varphi_2)=\Aafft$ follows from the definition of $\Aafft$ (Definition \ref{defi:nilhecke}).
\end{proof}

\subsection{Ring structure of $\Aafft$}

We give the description of the ring structure of $\Aafft$ according to the given three equivalent definitions of this module.

\begin{prop}
\label{product1}
Let $x,y \in \Wafft$, then we have
$$
\At_x \At_y =
\left \{
\begin{array}{l}
\At_{xy} \mbox{ if } \ell(xy) = \ell(x) + \ell(y) \\
0 \mbox{ otherwise}\, .
\end{array}
\right .
$$
\end{prop}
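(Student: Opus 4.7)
The plan is to reduce the identity to the classical nil-Hecke product formula in $\Aaff$ via the conjugation action of $Z$ on $\Waff$. First I would write $x = \tau\xh$ and $y = \sigma\yh$ with $\tau,\sigma \in Z$ and $\xh,\yh \in \Waff$, so that by Definition \ref{defi:Ax},
$$\At_x \At_y \;=\; \delta_\tau A_\xh \, \delta_\sigma A_\yh.$$
The goal is to move $\delta_\sigma$ to the left past $A_\xh$, gathering all of the $Z$-part on the left and all of the ordinary affine nil-Hecke part on the right.

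The key step is to iterate the conjugation formula \prop{aallpha}(1). For a single real root $\alpha$, $\delta_\sigma^{-1} A_\alpha \delta_\sigma = A_{\sigma^{-1}(\alpha)}$. Picking a reduced expression $\xh = s_{\alpha_{i_1}}\cdots s_{\alpha_{i_\ell}}$ and using \lemma{actiontau} which says $\sigma^{-1} s_{\alpha_i} \sigma = s_{\alpha_{f_{\sigma^{-1}}(i)}}$, I would deduce by induction on $\ell(\xh)$ that
$$\delta_\sigma^{-1} A_\xh \delta_\sigma \;=\; A_{\sigma^{-1}\xh\sigma},$$
where $\sigma^{-1}\xh\sigma$ is an element of $\Waff$ whose reduced expression is obtained from that of $\xh$ by applying the Dynkin automorphism $f_{\sigma^{-1}}$; in particular $\ell(\sigma^{-1}\xh\sigma) = \ell(\xh)$. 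Substituting back gives
$$\At_x \At_y \;=\; \delta_{\tau\sigma}\, A_{\sigma^{-1}\xh\sigma}\, A_\yh.$$

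At this point I would invoke the classical product formula in $\Aaff$, which is a direct consequence of \cite[Theorem~11.1.2]{kumar}: for $u,v \in \Waff$, $A_u A_v = A_{uv}$ if $\ell(uv) = \ell(u) + \ell(v)$ and $0$ otherwise. Applied with $u = \sigma^{-1}\xh\sigma$ and $v = \yh$, it remains to match the output with $\At_{xy}$. But $xy = \tau\xh\sigma\yh = (\tau\sigma)(\sigma^{-1}\xh\sigma)\yh$ is precisely the decomposition of $xy$ in $Z \ltimes \Waff$ (Definition \ref{defi:xhat}), so $\widehat{xy} = (\sigma^{-1}\xh\sigma)\yh$ and the $Z$-part of $xy$ is $\tau\sigma$. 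Since $\ell(\sigma^{-1}\xh\sigma) = \ell(\xh) = \ell(x)$ and $\ell(\yh) = \ell(y)$, the condition $\ell(uv) = \ell(u)+\ell(v)$ is equivalent to $\ell(xy) = \ell(x)+\ell(y)$, and in that case
$$\delta_{\tau\sigma}\, A_{(\sigma^{-1}\xh\sigma)\yh} \;=\; \delta_{\tau\sigma} A_{\widehat{xy}} \;=\; \At_{xy},$$
while otherwise the product vanishes.

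The only non-routine point in the plan is the induction step establishing $\delta_\sigma^{-1} A_\xh \delta_\sigma = A_{\sigma^{-1}\xh\sigma}$, which requires that the image of a reduced expression under $f_{\sigma^{-1}}$ remains reduced; this is precisely the content of \lemma{actiontau} combined with \lemma{zlength0}. Once this is in hand, the statement is a purely formal consequence of the classical nil-Hecke calculus, so I do not anticipate any substantive obstacle.
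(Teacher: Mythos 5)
Your proof is correct and follows essentially the same route as the paper: decompose $x,y$ into their $Z$- and $\Waff$-parts, commute the $\delta$ from $Z$ past the nil-Hecke element via the conjugation action (the paper cites Lemma \ref{lemma:actiontau} directly where you spell out the induction on a reduced word), and then reduce to the classical product formula in $\Aaff$ together with the length-preservation $\ell(\sigma^{-1}\xh\sigma)=\ell(\xh)$. The only differences are notational (the roles of $\sigma$ and $\tau$ are swapped) and a slightly more explicit treatment of the conjugation step.
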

\begin{proof}
  Write $x = \sigma \xh$ and $y = \tau \yh$ with $\sigma,\tau \in Z$ and $\xh,\yh \in \Waff$. Recall that for $u,v \in \Waff$, we have:
$$
A_u A_v =
\left \{
\begin{array}{l}
A_{uv} \mbox{ if } \ell(uv) = \ell(u) + \ell(v) \\
0 \mbox{ otherwise}\, .
\end{array}
\right .
$$
By Lemma \ref{lemma:actiontau}, we have
$$
\begin{array}{rcl}
\At_x \At_y & = & \delta_\sigma A_\xh \delta_\tau A_\yh \\
& = & \delta_\sigma \delta_\tau A_{\tau^{-1}(\xh)} A_\yh \\
& = &
\left \{
\begin{array}{l}
\delta_{\sigma \tau} A_{\tau^{-1}(\xh)\yh} \mbox{ if } \ell(\tau^{-1}(\xh)\yh) = \ell(\tau^{-1}(\xh)) + \ell(\yh) \\
0 \mbox{ otherwise}
\end{array}
\right .
\end{array}
$$
But $\ell(\tau^{-1}(\xh) \yh) = \ell(xy)$ since $xy = \sigma \xh \tau \yh = \sigma\tau \tau^{-1}(\xh)\yh$,
and $\ell(\tau^{-1}(\xh)) + \ell(\yh) = \ell(\xh) + \ell(\yh) = \ell(x) + \ell(y)$. The result follows.
\end{proof}

\medskip

We now express the product in $\Aafft = \varphi_1(\Z[\Pv] \otimes_{\Z[\Qv]} \Aaff)$. Note that we need to compute the product $(\delta_{t_\lv} A_u)(\delta_{t_\mv} A_v)$. We therefore need to ``move'' $\delta_{t_\mv}$ to the left of $A_u$. The following proposition gives formulas for this.

\begin{prop}
\label{product2}
Let $\lv \in \Pv$ and let $\alpha=\gamma + k\epsilon \in \Raff$. Then:
\begin{enumerate}
\item $A_\alpha \delta_{t_{\lv}} = \delta_{s_\alpha(t_\lv)}A_\alpha +
     \frac{1}{\gamma}(\delta_{t_\lv}-\delta_{t_{s_\alpha(\lv)}})$.
\item $\delta_{t_\lv}-\delta_{t_{s_\alpha(\lv)}} = \delta_{t_\lv}(1-\delta_{t_{-\scal{\alpha,\lv}\av}}),$
\item $1-\delta_{t_{n\av}} = (1+\delta_{t_\av}+\cdots+\delta_{t_{(n-1)\av}})(1-\delta_{t_\av})\, \textrm{ for } n \in \N, $
 \item $1-\delta_{t_{-\av}} = \gamma (A_\alpha - \gamma A_\alpha A_{\epsilon - \alpha} + A_{\epsilon-\alpha})$.
\end{enumerate}
\end{prop}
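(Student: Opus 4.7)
The statement collects four identities in $\Qafft$; my plan is to prove each by a direct computation from the definitions $A_\alpha = \frac{1}{\gamma}(\delta_e - \delta_{s_\alpha})$, $\delta_u\delta_v = \delta_{uv}$, and $\delta_w s = w(s)\delta_w$, together with the fact that translations $t_\mv$ act trivially on $S = \operatorname{Sym}(P)$ and that the affine reflection $s_\alpha$ acts on the finite weight lattice as its finite part $s_\gamma$ (the $\epsilon$-component being killed inside $S$).

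For (1), I would expand the left-hand side as $A_\alpha \delta_{t_\lv} = \frac{1}{\gamma}(\delta_{t_\lv} - \delta_{s_\alpha t_\lv})$, and separately compute $\delta_{t_{s_\alpha(\lv)}}A_\alpha = \frac{1}{\gamma}(\delta_{t_{s_\alpha(\lv)}} - \delta_{t_{s_\alpha(\lv)} s_\alpha})$, using the triviality of translations on $\gamma$. The key input is the conjugation identity $s_\alpha t_\lv = t_{s_\alpha(\lv)} s_\alpha$ in $\Wafft$, which follows from writing $s_\alpha = t_{-k\gv}s_\gamma$ and applying the semidirect-product relation $w t_\mv w^{-1} = t_{w(\mv)}$; adding the correction $\frac{1}{\gamma}(\delta_{t_\lv} - \delta_{t_{s_\alpha(\lv)}})$ then matches the two expressions. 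Part (2) follows from the standard reflection formula $s_\alpha(\lv) = \lv - \scal{\alpha,\lv}\av$ (and one notes $\scal{\alpha,\lv}\av = \scal{\gamma,\lv}\gv$ because $\scal{\epsilon,\lv} = 0$ on $\Pv$), so that $t_{s_\alpha(\lv)} = t_\lv t_{-\scal{\alpha,\lv}\av}$ and the identity is a straightforward factorization. Part (3) is the telescoping polynomial identity $1 - x^n = (1 + x + \cdots + x^{n-1})(1 - x)$ applied to the commuting unit $x = \delta_{t_\av}$, using $\delta_{t_\av}^k = \delta_{t_{k\av}}$.

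The main obstacle is (4). My plan is first to establish the affine analog of Lemma \ref{talpha}: with $s_\alpha = t_{-k\gv}s_\gamma$ and $s_{\epsilon - \alpha} = t_{(1-k)\gv}s_\gamma$, a direct multiplication (commuting $s_\gamma$ past the translation via $s_\gamma(\gv) = -\gv$) gives $s_\alpha s_{\epsilon - \alpha} = t_{-\gv}$, i.e.\ $\delta_{s_\alpha}\delta_{s_{\epsilon - \alpha}} = \delta_{t_{-\av}}$. Next I would expand the right-hand side of (4) using $A_\alpha = \frac{1}{\gamma}(\delta_e - \delta_{s_\alpha})$ and $A_{\epsilon - \alpha} = \frac{-1}{\gamma}(\delta_e - \delta_{s_{\epsilon - \alpha}})$, the sign arising from the finite part $-\gamma$ of $\epsilon - \alpha$. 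The delicate step is computing $\gamma^2 A_\alpha A_{\epsilon - \alpha}$: one must commute $\gamma$ past $\delta_{s_\alpha}$, which flips the sign since $s_\alpha(\gamma) = -\gamma$ in $S$. The bookkeeping produces
$$\gamma(A_\alpha + A_{\epsilon - \alpha}) = \delta_{s_{\epsilon - \alpha}} - \delta_{s_\alpha}, \qquad \gamma^2 A_\alpha A_{\epsilon - \alpha} = \delta_{s_{\epsilon - \alpha}} - \delta_e - \delta_{s_\alpha} + \delta_{t_{-\av}},$$
and subtracting makes the terms $\delta_{s_\alpha}$ and $\delta_{s_{\epsilon - \alpha}}$ cancel, leaving $\gamma(A_\alpha - \gamma A_\alpha A_{\epsilon - \alpha} + A_{\epsilon - \alpha}) = \delta_e - \delta_{t_{-\av}} = 1 - \delta_{t_{-\av}}$ as required.
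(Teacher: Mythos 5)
Your proof is correct and follows essentially the same approach as the paper: parts (1)–(3) by direct expansion from the ring relations and $s_\alpha t_\lv = t_{s_\alpha(\lv)}s_\alpha$, and part (4) by using $t_{-\av} = s_\alpha s_{\epsilon-\alpha}$ (Lemma~\ref{talpha}, which you re-derive from the semidirect-product decomposition) together with the definitions of $A_\alpha$ and $A_{\epsilon-\alpha}$. The only minor difference is that in (4) you expand fully in the $\delta$-basis to compute the cross term $\gamma^2 A_\alpha A_{\epsilon-\alpha}$, whereas the paper shortcuts that computation by invoking the commutation relation of Proposition~\ref{prop:aallpha}(2); both routes agree.
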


\begin{proof}
  (1) From the equality $s_\alpha t_\lv = t_{s_\alpha(\lv)}s_\alpha$, we get $\delta_{s_\alpha} \delta_{t_\lv} = \delta_{t_{s_\alpha(\lv)}} \delta_{s_\alpha}$ in $\Aafft$. By definition of
$A_\alpha$ (Definition \ref{defi:aalpha}), this relation implies
  $$(1-\gamma A_\alpha) \delta_{t_\lv} = \delta_{t_{s_\alpha(\lv)}} (1-\gamma A_\alpha)\, .
$$
Thus we get $\gamma A_\alpha \delta_{t_\lv} = \gamma \delta_{t_{s_\alpha(\lv)}} A_\alpha + \delta_{t_\lv} - \delta_{t_{s_\alpha(\lv)}}$.

\noindent
(2) and (3) are easy consequences of the product formulas in $\Qafft$.

\noindent
(4) By Lemma \ref{talpha}, we have
$$
\begin{array}{rcl}
1-\delta_{t_{-\av}} & = & 1-\delta_{s_\alpha s_{\epsilon-\alpha}} \\
& = & 1-(1-\gamma A_\alpha)(1+\gamma A_{\epsilon-\alpha}) \\
& = & \gamma A_\alpha - \gamma A_{\epsilon-\alpha} + \gamma A_\alpha \gamma A_{\epsilon-\alpha} \\
& = & \gamma A_\alpha - \gamma A_{\epsilon-\alpha} +
\gamma \left ( s_\alpha(\gamma) A_\alpha + \scal{\av,\gamma} \right ) A_{\epsilon-\alpha} \\
& = & \gamma A_\alpha + \gamma A_{\epsilon-\alpha} - \gamma^2 A_\alpha A_{\epsilon-\alpha}\, ,
\end{array}
$$
where we used Proposition \ref{prop:aallpha} on the fourth line.
\end{proof}

\medskip

The ring structure in $\Aafft=\varphi_2(\Z[Z] \otimes_\Z \Aaff)$ is easy to describe:

\begin{prop}
\label{product3}
Let $\sigma,\tau \in Z$ and let $a,b \in \Aaff$. Then:
$$
\varphi_2(\sigma \otimes a) \cdot \varphi_2(\tau \otimes b)
= \varphi_2(\sigma \tau \otimes \tau^{-1}(a)b)\, .
$$
\end{prop}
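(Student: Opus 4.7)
The plan is straightforward: unfold the definition of $\varphi_2$ on both sides and reduce the equality to a commutation relation between $\delta_\tau$ and elements of $\Aaff$. By Definition \ref{defi:deuxdef}, the left-hand side is the product $\delta_\sigma a \delta_\tau b$ in $\Qafft$, while the right-hand side is $\delta_{\sigma\tau}\, \tau^{-1}(a)\, b$. Since $\delta_\sigma \delta_\tau = \delta_{\sigma\tau}$ in $\Qafft$, the whole statement reduces to proving
$$a \delta_\tau = \delta_\tau \tau^{-1}(a) \quad \text{for every } a \in \Aaff \text{ and every } \tau \in Z,$$
or equivalently $\delta_\tau a \delta_{\tau^{-1}} = \tau(a)$, where $\tau$ acts on $\Aaff$ through the conjugation action introduced in Lemma \ref{lemma:actiontau} (and its notational extension $\tau(\xh) = \tau\xh\tau^{-1}$).

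To establish this, I would proceed by checking it on a generating family of $\Aaff$ as an $S$-bimodule, namely on the scalars $s \in S$ and on the divided difference operators $A_i$ for $i \in I \cup \{0\}$. For $s \in S \subset \FS$, the relation $\delta_\tau s = \tau(s) \delta_\tau$ is built into Definition \ref{defi:qaff}, so $\delta_\tau s \delta_{\tau^{-1}} = \tau(s)$. For the generators $A_i$, we apply Proposition \ref{prop:aallpha}(1) with $w = \tau$: this gives $\delta_\tau A_{\alpha_i} \delta_{\tau^{-1}} = A_{\tau(\alpha_i)} = A_{\alpha_{f_\tau(i)}}$, where the second equality uses Lemma \ref{lemma:actiontau}. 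Hence conjugation by $\delta_\tau$ preserves $\Aaff$ and coincides on it with the automorphism $\tau$ coming from the Dynkin diagram automorphism $f_\tau$. This is compatible with products by Proposition \ref{product1}, so the relation extends from generators to all of $\Aaff$.

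Putting the two pieces together:
$$\varphi_2(\sigma \otimes a)\cdot \varphi_2(\tau \otimes b) = \delta_\sigma a \delta_\tau b = \delta_\sigma \delta_\tau \tau^{-1}(a) b = \delta_{\sigma\tau} \tau^{-1}(a) b = \varphi_2(\sigma\tau \otimes \tau^{-1}(a) b),$$
which is the asserted formula. I do not expect any real obstacle here; the only subtle point is making explicit that the conjugation action of $Z$ on $\Waff$ from Lemma \ref{lemma:actiontau} lifts canonically to a ring automorphism of $\Aaff$, and this is precisely what Proposition \ref{prop:aallpha}(1) provides.
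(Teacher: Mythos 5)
Your proof is correct and follows the same route as the paper: both reduce the claim to the identity $\delta_\sigma a \delta_\tau b = \delta_\sigma\delta_\tau\,\tau^{-1}(a)\,b$ in $\Qafft$, that is, to the commutation $a\,\delta_\tau = \delta_\tau\,\tau^{-1}(a)$, which you verify on the generators $S$ and the $A_i$ via Proposition~\ref{prop:aallpha}(1) and Lemma~\ref{lemma:actiontau}, whereas the paper simply asserts it. (A small side note: the appeal to Proposition~\ref{product1} for multiplicativity is superfluous, since conjugation by the invertible element $\delta_\tau$ is automatically a ring automorphism of $\Qafft$.)
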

\begin{proof}
This follows from the fact that in $\Aafft$, we have
$\delta_\sigma a \delta_\tau b = \delta_\sigma \delta_\tau \tau^{-1}(a)b \, .$
\end{proof}

\medskip

In the next proposition, we give an explicit formula for the commuting relation of the elements $\At_x$ and $\lambda \in P$, generalizing \cite[Proposition 4.3.b]{kostant}:

\begin{prop}
\label{product4}
Let $x \in \Wafft$ and let $\lambda \in P$. We have:
$$
\At_x \lambda = x(\lambda) \At_x + \sum_{\a:\ xs_\a \lessdot x} \scal{\lambda,\a^\vee} \At_{xs_\a}\, ,
$$
where the sum runs over positive real roots $\a$ such that $xs_\a \lessdot x$.
\end{prop}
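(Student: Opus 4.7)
The plan is to reduce the statement to the classical Kostant--Kumar commutation formula in the (non-extended) affine nil-Hecke ring $\Aaff$ by using the unique decomposition $x = \tau \xh$ with $\tau \in Z$ and $\xh \in \Waff$ (Definition \ref{defi:xhat}), together with the definition $\At_x = \delta_\tau A_\xh$ (Definition \ref{defi:Ax}).

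First, I would invoke \cite[Prop.~4.3.b]{kostant} applied to $\xh \in \Waff$, which yields
$$
A_\xh \lambda \;=\; \xh(\lambda)\, A_\xh \;+\; \sum_{\beta:\ \xh s_\beta \lessdot \xh}\scal{\lambda,\beta^\vee}\, A_{\xh s_\beta},
$$
the sum being over positive real roots $\beta$. Left-multiplying by $\delta_\tau$ and using the commutation rule $\delta_\tau s = \tau(s)\delta_\tau$ in $\Qafft$ (Definition \ref{defi:qaff}), one gets
$$
\At_x \lambda \;=\; \delta_\tau A_\xh \lambda \;=\; \tau\bigl(\xh(\lambda)\bigr)\, \delta_\tau A_\xh \;+\; \sum_{\beta:\ \xh s_\beta \lessdot \xh}\scal{\lambda,\beta^\vee}\, \delta_\tau A_{\xh s_\beta}.
$$

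Since the action of $\Wafft$ on $P$ is a group homomorphism, $\tau(\xh(\lambda)) = (\tau\xh)(\lambda) = x(\lambda)$. By Definition \ref{defi:Ax}, $\delta_\tau A_\xh = \At_x$; and since $\xh s_\beta \in \Waff$, the element $x s_\beta = \tau \xh s_\beta$ has $Z$-part $\tau$ and $\Waff$-part $\xh s_\beta$, so $\delta_\tau A_{\xh s_\beta} = \At_{x s_\beta}$. Substituting gives the desired formula.

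It only remains to match the indexing sets of the two sums. By Definition \ref{defi:xhat}, the covering relation $x s_\beta \lessdot x$ in $\Wafft$ requires that $x$ and $x s_\beta$ share the same $Z$-part (which holds here, both equal $\tau$) and that $\widehat{x s_\beta} \lessdot \xh$ in $\Waff$; the latter is exactly $\xh s_\beta \lessdot \xh$. Hence the two index sets coincide and the formula follows. The only subtle point, which should be verified cleanly, is this compatibility between the covering relations in $\Waff$ and $\Wafft$, but it is an immediate consequence of Definition \ref{defi:xhat} once one observes that right multiplication by $s_\beta \in \Waff$ does not alter the $Z$-part of $x$.
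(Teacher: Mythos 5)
Your proof follows exactly the same route as the paper's: decompose $x=\tau\xh$, apply the Kostant--Kumar formula \cite[Prop.~4.3.b]{kostant} to $A_\xh\lambda$, left-multiply by $\delta_\tau$, use the twisted commutation $\delta_\tau s=\tau(s)\delta_\tau$ to absorb $\tau$ into $x(\lambda)$ and $\At_{xs_\alpha}$, and finish by matching the covering relations via Definition \ref{defi:xhat}. The argument is correct and essentially identical to the one in the paper.
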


\begin{proof}
Let $x=\tau \xh \in \Wafft$ with $\tau \in Z$ and $\xh \in \Waff$. Let $\lambda \in P$. According to Definition \ref{defi:Ax}, we have
$\At_x=\delta_\tau A_\xh$. Using \cite[Proposition 4.3.b]{kostant}, we get (sums  always run over positive real roots):
\begin{align*}
\At_x \lambda = \delta_\tau A_\xh \lambda &=
\delta_\tau \xh(\lambda) A_\xh + \delta_\tau \sum_{\a:\ \xh s_\alpha \lessdot \xh} \scal{\lambda,\a^\vee} A_{\xh s_\a} \\
&= \tau \xh(\lambda) \delta_\tau A_\xh + \sum_{\a:\ \xh s_\alpha \lessdot \xh} \scal{\lambda,\a^\vee} \delta_\tau A_{\xh s_\a} \\
&= x(\lambda) \At_x + \sum_{\a:\ \xh s_\alpha \lessdot \xh} \scal{\lambda,\a^\vee} \At_{xs_\a}\, .
\end{align*}
Since, by Definition \ref{defi:Ax}, the relation $\xh s_\alpha \lessdot \xh$ holds if and only if the relation
$xs_\alpha \lessdot x$ holds, we get the result.
\end{proof}

\subsection{Module over $\Aafft$}

We now define a natural module over $\Aafft$ which will be identified in the next section
with the homology of the adjoint affine Grassmmannian $\OKad$.

\begin{defi}
\label{defwafftm}
Let $\Waffm$ resp. $\Wafftm$ be the set of minimal length representatives of the quotient $\Waff/W$
resp. $\Wafft/W$.
By \cite[Lemma 3.3]{lam-shimozono}, $\Waffm$ is
the set of elements $w=ut_\lv$ such that $\lambda^\vee \leq 0$ and
$\forall i \in I,\scal{\lv,\alpha_i}=0 \implies u(\alpha_i) >0\, .$
\end{defi}

We generalize the characterization of $\Waffm$ as follows:

\begin{lemma}
\label{wafftm}
We have $ut_\lv \in \Wafftm$ if and only if $\lv \leq 0$ and for all $i$ in $I$ it holds
$$\scal{\lv,\alpha_i}=0 \implies u(\alpha_i) >0\, .$$
\end{lemma}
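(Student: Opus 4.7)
My plan is to reduce to the standard characterization of minimal coset representatives in a Coxeter group, applied to the pair $(\Waff,W)$, and then translate the resulting condition back via the action formula~(\ref{defaction}).

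First I would write $x \in \Wafft$ uniquely as $x = \tau \xh$ with $\tau \in Z$ and $\xh \in \Waff$ (see Definition~\ref{defi:xhat}). Since $\ell(x) = \ell(\xh)$ and the coset $xW$ equals $\tau(\xh W)$, its minimum-length element is $\tau$ times the minimum-length element of $\xh W$; hence $x \in \Wafftm$ if and only if $\xh \in \Waffm$. As $\Waff$ is a Coxeter group and $W = \scal{s_i \mid i \in I}$ is a parabolic subgroup, the standard criterion gives that $\xh \in \Waffm$ if and only if $\xh(\alpha_i)$ is a positive real affine root for every $i \in I$. By Lemma~\ref{zlength0}, $\tau$ preserves the set of positive real roots, so this latter condition is equivalent to $x(\alpha_i) = \tau(\xh(\alpha_i))$ being a positive real root for every $i \in I$.

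Next I would compute $x(\alpha_i)$ explicitly. Viewing $\alpha_i$ as the real affine root $\alpha_i + 0 \cdot \epsilon$, formula~(\ref{defaction}) applied to $x = u t_\lv$ yields
$$x(\alpha_i) = u(\alpha_i) - \scal{\alpha_i,\lv}\,\epsilon.$$
Since a real affine root $\gamma + k\epsilon$ with $\gamma \in R$ is positive precisely when $k > 0$, or when $k = 0$ and $\gamma \in R$ is positive, the requirement that $x(\alpha_i)$ be positive translates into: either $\scal{\alpha_i,\lv} < 0$, or $\scal{\alpha_i,\lv} = 0$ and $u(\alpha_i) > 0$.

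Quantifying this over all $i \in I$ is exactly equivalent to the two conditions (i) $\scal{\alpha_i,\lv} \leq 0$ for every $i \in I$ (that is, $\lv \leq 0$), and (ii) $\scal{\alpha_i,\lv} = 0 \Rightarrow u(\alpha_i) > 0$, which gives the claimed characterization. The main (mild) obstacle is that $\Wafft$ itself is not a Coxeter group so one cannot apply the simple-reflection criterion directly; the decomposition $x = \tau \xh$ together with Lemma~\ref{zlength0} sidesteps this by reducing everything to the genuine Coxeter group $\Waff$, and the remainder is a routine unwinding of the affine action on $\alpha_i$.
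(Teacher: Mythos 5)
Your proof is correct, but it takes a genuinely different route from the paper's. The paper applies the explicit length formula
$\ell(ut_\lv) = \sum_{\a \in R^+} |\scal{\lv,\a}+\chi(u(\a)<0)|$
from \cite[Corollary 3.13]{cmp4} and computes the difference $\ell(ut_\lv s_i)-\ell(ut_\lv)$ directly, reading off when it is non-negative for every $i \in I$. You instead reduce to the honest Coxeter group $\Waff$ via the factorization $x=\tau\xh$ (using $\ell(\tau\xh w)=\ell(\xh w)$ to identify $\Wafftm$-membership with $\Waffm$-membership), invoke the standard positive-root characterization of minimal coset representatives for the parabolic $W \subset \Waff$, use Lemma~\ref{zlength0} to replace $\xh(\alpha_i)>0$ by $x(\alpha_i)>0$, and finally unwind the affine action~(\ref{defaction}) to turn positivity of $x(\alpha_i)=u(\alpha_i)-\scal{\alpha_i,\lv}\epsilon$ into the stated inequalities. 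Both proofs are sound; the paper's is more self-contained given that it already has Corollary~3.13 of \cite{cmp4} at hand, while yours is more conceptual, avoids the length formula entirely, and makes transparent why the extension from $\Waff$ to $\Wafft$ is harmless (the $Z$-factor permutes positive roots and leaves lengths unchanged).
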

\begin{proof}
Recall that we have a length formula in $\Wafft$ similar to the one in $\Waff$:
$$
\ell(ut_\lv) = \sum_{\a \in R^+} \left | \scal{\lv,\a} + \chi(u(\a)<0) \right |\, ,
$$
where $\chi(\cP)=1$ if $\cP$ is true and $\chi(\cP)=0$ if $\cP$ is false. This is proved in
\cite[Corollary 3.13]{cmp4}. It follows that
$$\ell(ut_\lv s_i)-\ell(ut_\lv)=|\scal{\lv,-\a_i}+\chi(u(\a_i)>0)|-|\scal{\lv,\a_i}+\chi(u(\a_i)<0)|\, .$$
This is non-negative for all
$i$ in $I$ if and only if for all $i$, $\scal{\lv,\a_i} \leq 0$, and
$\scal{\lv,\alpha_i}=0$ implies $u(\alpha_i) >0$.
\end{proof}

\begin{defi}
\label{moduleM}
For each $w \in \Waffm$, we define a variable $\xi_w$ and we set
$$\displaystyle M = \bigoplus_{w \in \Waffm} S \cdot \xi_w.$$
Recall \cite[\S 6.2]{lam-shimozono} that we may define a left $\Aaff$-module structure on $M$ via:
$$
A_w \cdot \xi_u = \left \{
\begin{array}{l}
\xi_{wu} \mbox{ if } \ell(wu)=\ell(w)+\ell(u) \mbox{ and } wu \in \Waffm \, ,\\
0 \mbox{ otherwise}\, .
\end{array}
\right .
$$
As left $\Aaff$-module, we have an isomorphism
$$M \simeq \Aaff/J, \textrm{ where } J = \bigoplus_{w \not \in \Waffm} S \cdot A_w.$$
\end{defi}

Using Proposition \ref{prop:troisdef}, we define similarly a left ideal in $\Aafft$.

\begin{defi}
\label{defi:Jtilde}
Let $\displaystyle \Jt = \varphi_1(\Z[\Pv] \otimes_{\Z[\Qv]} J)
=\varphi_2(\Z[Z] \otimes_\Z J) = \bigoplus_{w \not \in \Wafftm} \At_w \, .$
\end{defi}

\begin{defi}
\label{defi:Mtilde}
We introduce the following three modules.
\begin{itemize}
 \item Let $\Mt_1$ be the $S$-module $\Z[\Pv] \otimes_{\Z[\Qv]} M$.
 \item Let $\Mt_2 = \Z[Z] \otimes_\Z M$. This is an $\Aafft$-module with the action given by
 $(\sigma \otimes a) \cdot (\tau \otimes \xi) = \sigma \tau \otimes \tau^{-1}(a) \cdot \xi$, for
 $\sigma \otimes a \in \Z[Z] \otimes_\Z \Aaff = \Aafft$.
 \item Let $\displaystyle \Mt_3 = \bigoplus_{w \in \Wafftm} S \cdot \xit_w$. This is an
 $\Aafft$-module with the action given by
 $$
 \At_w \cdot \xit_u = \left \{
\begin{array}{l}
\xit_{wu} \mbox{ if } \ell(wu)=\ell(w)+\ell(u) \mbox{ and } wu \in \Wafftm \, ,\\
0 \mbox{ otherwise}\, ,
\end{array}
\right .
$$
for $\displaystyle \At_w \in \Aafft = \bigoplus_{w \in \Wafft} S \cdot \At_w$.
\end{itemize}
\end{defi}

\begin{prop}
\label{3modules}
With the above definitions,
\begin{enumerate}
 \item Moding out by $\Jt$, the morphism $\varphi_1$ induces
 an $S$-module isomorphism $\Mt_1 \to \Aafft/\Jt$.
 \item Moding out by $\Jt$, the morphism $\varphi_2$ induces
 an $\Aafft$-module isomorphism $\Mt_2 \to \Aafft/\Jt$ (which is \emph{not} $S$-linear if we give
 $\Mt_2$ the tensor product $S$-module structure).
 \item The left $\Aafft$-modules $\Aafft/\Jt$ and $\Mt_3$ are isomorphic.
\end{enumerate}
\end{prop}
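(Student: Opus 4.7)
The overall plan is to prove (3) first, which follows directly from the $S$-basis decomposition of $\Aafft/\Jt$ combined with the product formula in Proposition~\ref{product1}, and then derive (1) and (2) by showing that $\varphi_1$ and $\varphi_2$ are themselves bijections onto $\Aafft$. Proposition~\ref{prop:troisdef} has already established that $\varphi_1(\Z[\Pv] \otimes_{\Z[\Qv]} J) = \varphi_2(\Z[Z] \otimes_\Z J) = \Jt$, so once bijectivity of the $\varphi_i$ is in hand, the induced isomorphisms $\Mt_i \to \Aafft/\Jt$ come for free.

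For part (3), both $\Aafft/\Jt$ and $\Mt_3$ admit $S$-bases indexed by $\Wafftm$, so I would define $\psi : \Aafft/\Jt \to \Mt_3$ by $\At_x + \Jt \mapsto \xit_x$; this is manifestly an $S$-linear bijection. The only point to verify is $\Aafft$-linearity, which reduces to computing $\At_w \cdot (\At_x + \Jt) = \At_w \At_x + \Jt$ via Proposition~\ref{product1}. Splitting on whether $\ell(wx) = \ell(w) + \ell(x)$ and on whether $wx \in \Wafftm$, each case matches exactly the action on $\Mt_3$ prescribed in Definition~\ref{defi:Mtilde}. For part (1), the key step is bijectivity of $\varphi_1 : \Z[\Pv] \otimes_{\Z[\Qv]} \Aaff \to \Aafft$. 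Since $\Z[\Pv]$ is free of rank $|Z|$ over $\Z[\Qv]$ with basis any set $\bar Z \subset \Pv$ of coset representatives of $\Pv/\Qv$, the source decomposes as $\bigoplus_{\bar\lv \in \bar Z} h_{\bar\lv} \otimes \Aaff$. Writing the unique decomposition $t_{\bar\lv} = \tau_{\bar\lv}\,\widehat{t_{\bar\lv}}$ with $\tau_{\bar\lv} \in Z$ and $\widehat{t_{\bar\lv}} \in \Waff$, the map $\varphi_1$ sends $h_{\bar\lv} \otimes \Aaff$ bijectively onto $\delta_{\tau_{\bar\lv}} \Aaff$, since left multiplication by the unit $\delta_{\widehat{t_{\bar\lv}}} \in \Aaff$ is a bijection; and as $\bar\lv$ ranges over $\bar Z$, $\tau_{\bar\lv}$ ranges bijectively over $Z$, identifying the image with $\bigoplus_{\tau \in Z} \delta_\tau \Aaff = \Aafft$. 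Passing to quotients yields the desired $S$-linear isomorphism $\Mt_1 \to \Aafft/\Jt$.

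For part (2), bijectivity of $\varphi_2$ is immediate from the decomposition $\Aafft = \bigoplus_{\tau \in Z} \delta_\tau \Aaff$. The $\Aafft$-linearity of the induced map $\Mt_2 \to \Aafft/\Jt$ is precisely the content of Proposition~\ref{product3}: the formula defining the $\Aafft$-action on $\Mt_2$ in Definition~\ref{defi:Mtilde} is exactly the image under $\varphi_2$ of left multiplication in $\Aafft$. For the parenthetical failure of $S$-linearity under the tensor-product structure $s(\tau \otimes \xi) = \tau \otimes s\xi$, I would compare $\varphi_2(\tau \otimes s\xi) = \delta_\tau s \xi$ with $s \cdot \varphi_2(\tau \otimes \xi) = \delta_\tau \tau^{-1}(s) \xi$; these differ in general because $\tau$ acts non-trivially on $S$ by Lemma~\ref{lemma:actiontau}. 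The only genuine obstacle in the entire proof is the bijectivity of $\varphi_1$, which requires the explicit coset-representative argument above; the remaining verifications reduce to bookkeeping against Propositions~\ref{product1} and~\ref{product3}.
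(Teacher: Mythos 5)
Your proof is correct and is essentially an expanded version of the paper's one-line argument (``follows easily from Propositions~\ref{product1} and~\ref{product3}''). You supply the details the paper omits: the free-$\Z[\Qv]$-module decomposition of $\Z[\Pv]$ and the observation that $\delta_{\widehat{t_{\bar\lambda^\vee}}}$ is a unit of $\Aaff$ together give injectivity of $\varphi_1$ (surjectivity being Proposition~\ref{prop:troisdef}), the decomposition $\Aafft = \bigoplus_{\tau \in Z}\delta_\tau \Aaff$ gives bijectivity of $\varphi_2$, and the case analysis via Proposition~\ref{product1} verifies $\Aafft$-linearity in part (3), exactly as the paper intends. The only cosmetic point: for the well-definedness of the quotient maps you are implicitly using that $\Z[\Pv]$ is flat (indeed free) over $\Z[\Qv]$ and $\Z[Z]$ is free over $\Z$, so that tensoring commutes with the quotient $\Aaff \to M = \Aaff/J$; since you already note the freeness, this is fine, but it is worth saying explicitly that $\Z[\Pv]\otimes_{\Z[\Qv]} M \cong (\Z[\Pv]\otimes_{\Z[\Qv]}\Aaff)/(\Z[\Pv]\otimes_{\Z[\Qv]} J)$ for this reason.
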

\begin{proof}
This proposition follows easily from Propositions \ref{product1} and
\ref{product3}.
\end{proof}

\begin{remark}
  The $\Aafft$-module structure induced on $\Mt_1$ by the isomorphism in Proposition \ref{3modules}.(1) can also be described via Proposition \ref{product2}.
\end{remark}

\begin{defi}
\label{defiMtilde}
The $\Aafft$-module defined by one of the above equivalent definitions will be denoted by $\Mt$.
\end{defi}

\section{Homology of the adjoint affine Grassmannian $\OKad$}
\label{section:homology}

In this section, we recall the adjoint affine Grassmannian $\OKad$,
we prove that the $\Aafft$-module $\Mt$ is isomorphic to the homology of $\OKad$, we define a ring structure on this module and study the compatibility of these two structures.

\subsection{Cohomology of the finite-dimensional flag manifold $G/B$}

Recall, see for example \cite[Chapter 11]{kumar}, that $H^*_T(G/B)$ has an $S$-basis $(\xi^w)_{w \in W} = (\sigma^B(w))_{w \in W}$ indexed by the Weyl group. The pull-back along the map $(G/B)^T \to G/B$ induces an inclusion
$$H^*_T(G/B) \to H_T^*((G/B)^T) = S^W.$$
Viewing $\xi^w$ as a function on $W$, Kumar \cite[11.1.6.(3)]{kumar} sets $d_{u,v} = \xi^u(v) = \scal{\xi^u,v}$ and $D = (d_{u,v})_{u,v \in W}$. If $(f^w)_{w \in W}$ is the basis of $S^W$ given by $\scal{f^u,v} = f^u(v) = \delta_{u,v}$, then we have $(\xi^u)_u = D(f^u)_u$.
Given the identification \cite[11.1.4(2)]{kumar}, we also have $\scal{f^u,\delta_v}=\delta_{u,v}$.

The dual of $H^*_T(G/B)$ is $H_*^T(G/B)$ and identifies as an $S$-module
with the $S$-subalgebra $\A$ of $\Aaff$ generated by $(A_w)_{w \in W}$:
\begin{equation}
\label{homology}
H_*^T(G/B) \simeq \bigoplus_{w \in W} S \cdot A_w
\end{equation}
Note that $(A_w)_{w \in W}$ is the dual basis to $(\xi^w)_{w \in W}$ \emph{i.e.} $\scal{\xi^u,A_v} = \xi^u(A_v) = \delta_{u,v}$ (see \cite[11.1.5]{kumar}, were $A_u$ is denoted by $x_u$). Over $F = {\rm Frac}(S)$ we also have the basis $(\delta_w)_{w \in W}$ for $H_*^T(G/B)$. Kumar, in \cite[11.1.2.(e)]{kumar}, describes the base change:
$$A_u = \sum_v c_{u,v} \delta_v$$
with $C = (c_{u,v})_{u,v \in W}$ a matrix with coefficients in $S$, in particular, we have $(A_v)_v = C(\delta_v)_v$. We have the following relation between the matrices $C$ and $D$.

\begin{fact}[See {\cite[11.1.7.(a)]{kumar}}]
\label{fact_base_change}
We have $D^{-1} = C^T$. Thus,
$$
\delta_v = \sum_{w \leq v} \xi^w(v) A_w\, .
$$
\end{fact}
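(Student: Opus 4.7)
The plan is to deduce the fact from the two basis-change identities already set up above the statement, together with the duality pairing $\scal{\xi^u,A_v}=\delta_{u,v}$ and the fact $\scal{f^u,\delta_v}=\delta_{u,v}$. The argument is short and largely formal; as noted, it is precisely the content of \cite[11.1.7.(a)]{kumar}, so after sketching the key linear-algebra step I would just refer the reader there.

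First, I would compute the matrix product $DC^T$ entry by entry. Expanding $\xi^u=\sum_w d_{u,w}f^w$ and $A_v=\sum_{w'} c_{v,w'}\delta_{w'}$ and using bilinearity of the pairing together with $\scal{f^w,\delta_{w'}}=\delta_{w,w'}$ gives
$$
\delta_{u,v}=\scal{\xi^u,A_v}=\sum_{w,w'} d_{u,w}\,c_{v,w'}\,\delta_{w,w'}=\sum_w d_{u,w}\,c_{v,w}=(DC^T)_{u,v}.
$$
This shows $DC^T=I$, i.e.\ $D^{-1}=C^T$, which is the first assertion.

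Second, I would invert the change of basis $(A_v)_v=C(\delta_v)_v$ to obtain $(\delta_v)_v=C^{-1}(A_v)_v=D^T(A_v)_v$. Reading off coordinates,
$$
\delta_v=\sum_w (D^T)_{v,w}A_w=\sum_w d_{w,v}A_w=\sum_w \xi^w(v)A_w.
$$
To reduce the sum to indices $w\le v$, I would invoke the standard vanishing property of Schubert classes restricted to torus fixed points, namely that $\xi^w(v)=0$ unless $w\le v$ in the Bruhat order (see \cite[11.1.2.(g)]{kumar}). The main thing to watch out for is keeping the transpose/ordering conventions straight between the two formulas $(\xi^u)_u=D(f^u)_u$ and $(A_v)_v=C(\delta_v)_v$; once these are fixed, the computation above is just a one-line manipulation and no real obstacle remains.
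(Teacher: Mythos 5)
Your proof matches the paper's argument: the paper cites Kumar for $D^{-1}=C^T$ and then derives the formula for $\delta_v$ exactly as you do, via $(\delta_v)_v = C^{-1}(A_v)_v = D^T(A_v)_v$, together with triangularity of $D$ to restrict the sum to $w\le v$. Your short derivation of $D^{-1}=C^T$ from the two pairings $\scal{\xi^u,A_v}=\delta_{u,v}$ and $\scal{f^u,\delta_v}=\delta_{u,v}$ is a correct unpacking of the citation rather than a different route.
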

\begin{proof}
In fact, from the identity $(A_v)_v=C(\delta_v)_v$, we deduce that
$(\delta_v)_v=C^{-1}(A_v)_v=D^T(A_v)$. Since $D^T_{v,w}=\xi^w(v)$ and the
matrix $D$ is triangular, we get the result.
\end{proof}

\noindent
Note also that an explicit formula for the coefficients $\xi^w(v)$ is known: see
\cite[Proposition 11.1.11]{kumar}.

\subsection{Affine Grassmannian and the Pontryagin ring structure}

Let $G$ be the simply-connected almost simple group associated to $\fg$, and let
$\Gad$ be the adjoint quotient of this group. Let $K$ resp. $\Kad$ be a maximal compact
subgroup in $G$ resp. $\Gad$.
Let $\OK$ resp. $\OKad$ be
the group of loops $l$ with values in $K$ resp. $\Kad$ such that $l(0)$ is the unit
element in $K$ resp. $\Kad$.
By a loop we mean a map $l:\mathbb{S}^1 \to K^{(\rm ad)}$ that extends to a meromorphic map
$\mathbb{D}^\circ \to G^{(\rm ad)}$, where $\mathbb{D}^\circ$ denotes the pointed disk. Moding out a loop by the center
of $K$ yields an inclusion
$\OK \subset \OKad$. The action of $T \cap K$
on $\OK$ resp. $\OKad$ is given by conjugation.

This implies that the equivariant homology of $\OK$ and
$\OKad$ have a natural structure of an algebra, given by the Pontryagin product which is also
$(T \cap K)$-equivariantly homotopy equivalent to the point-wise product of loops. In this section,
we will recall an algebraic model for $H_*^{T \cap K}(\OK)$ and give one for
$H_*^{T \cap K}(\OKad)$. In particular we will describe the ring structure as well as an $\Aafft$-module structure
on $H_*^{T \cap K}(\OKad)$ extending the ring structure and the $\Aaff$-module structure on $H_*^{T \cap K}(\OK)$.

\subsection{Geometry of fixed points in $\OKad$}

Since $K \to K^\ad$ is the universal cover of $K^\ad$, the connected components of $\OKad$ are isomorphic to $\OK$ and are indexed by $\pi_1(G^\ad) = \pi_1(K^\ad) = Z$. We now describe the $T$-fixed points in $\OKad$. We have, in the loop space picture
$$(\OKad)^T = \{ \psit_{t_\lv} : \mathbb{S}^1 \to K^\ad \ | \ \lv \in P^\vee\},$$
where $\psit_{t_\lv}(t) = \exp(2i\pi t\lv)$ is the loop induced by the one-parameter subgroup $\lv$ of $T^\ad$ (the maximal torus of $K^\ad$). For $\lv \in P^\vee$, let $[\lv]$ be its class in $P^\vee/Q^\vee = \pi_1(K^\ad)$ and denote by $\OK_{[\lv]}^\ad$ be the connected component of $\OKad$ containing $\psit_{t_\lv}$. We have
$$\OKad = \coprod_{[\lv] \in P^\vee/Q^\vee} \OK_{[\lv]}^\ad.$$
Let $m_\lv : \OK \to \OK_{[\lv]}^\ad$ be the left multiplication by $\psit_{t_\lv}$. Since $T$ and $\psit_{t_\lv}$ commute,
this is a $T$-equivariant isomorphism. Thus, $H_*^T(\OK_{[\lv]}^\ad) \simeq H_*^T(\OK)$.

\subsection{Reminder on $H_*^T(\OK)$}

Recall from \cite{kostant} that $\OK$ has a cellular decomposition whose cells are indexed by $\Waffm$.
This implies that, as $S$-module, we have
$$H_*^T(\OK) = \bigoplus_{w \in \Waffm} S \cdot \xi_w \simeq M.$$
Furthermore, according to \cite[(3.1) and (3.2)]{lam}, $\Aaff$ acts on $H_*^T(\cG/\cP)$ by
$$
A_v \cdot \xi_w =
\left \{ 
\begin{array}{l}
\xi_{vw} \mbox{ if } \ell(vw)=\ell(v)+\ell(w) \mbox{ and } vw \in \Waffm \\
0 \mbox{ otherwise}
\end{array}
\right.
$$
and
$\Aaff$ acts on $H^*_T(\cG/\cP)$ by
$$
A_v \cdot \xi^w =
\left \{ 
\begin{array}{l}
\xi^{vw} \mbox{ if } \ell(vw)=\ell(w)-\ell(w) \mbox{ and } vw \in \Waffm \\
0 \mbox{ otherwise}
\end{array}
\right.
$$

\subsection{$S$-algebra structure on $H_*^T(\OKad)$}

We use the $T \cap K$-equivariant homology of the $T \cap K$-space $\OKad$, where $T \cap K$ acts
on $\OKad$ via $T \cap K \to (T \cap K)^{\rm ad} \to \Gad$.
The inclusion $T \cap K \to T$ induces an isomorphism in equivariant
cohomology $H_T^*(pt) \to H_{T \cap K}^*(pt)$.
Note that we have $H_{T \cap K}(\OK) \simeq H_{T \cap K}(\cG/\cP) \simeq H_{T}(\cG/\cP)$, where $\cG / \cP$ is the affine Grassmannian.
Abusing notations slightly, we will
denote in the following $H_{T \cap K}(\OK)$ simply by $H_T(\OK)$, and similarly for $H_T(\OKad)$.
The $T$-equivariant cohomology of the point is the symmetric algebra on $P$, namely $S$, see \cite[p.5]{brion},
so that the homology $H_*^T(\OKad)$ will be an $S$-module and even an $S$-algebra. We are not considering $T^{\rm ad}$-equivariant homology. 

\begin{prop}
  \label{prop-HOKad-algebre}
As $S$-algebras, we have: $H_*^T(\OKad) \simeq S[P^\vee] \otimes_{S[\Qv]} H_*^T(\OK).$
\end{prop}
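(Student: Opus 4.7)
The strategy is to exploit the connected component decomposition $\OKad = \coprod_{[\lv] \in P^\vee/\Qv} \OK_{[\lv]}^\ad$ recalled just above, together with the $T$-equivariant isomorphisms $m_\lv : \OK \to \OK_{[\lv]}^\ad$ given by left multiplication by $\psit_{t_\lv}$. Fixing a system $(\lv_i)$ of representatives for $P^\vee/\Qv$, this yields an $S$-module isomorphism
$$H_*^T(\OKad) \simeq \bigoplus_i m_{\lv_i,*}\, H_*^T(\OK).$$
Since $S[P^\vee]$ is a free $S[\Qv]$-module with basis $(h_{\lv_i})$, the right hand side of the proposition admits the same decomposition, so both sides are isomorphic as free $S$-modules of the same rank.

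I would then promote this to an $S$-algebra isomorphism by defining
$$\Phi: S[P^\vee] \otimes_{S[\Qv]} H_*^T(\OK) \longrightarrow H_*^T(\OKad),\qquad h_\lv \otimes \xi \longmapsto m_{\lv,*}(\xi),$$
where the $S[\Qv]$-action on $H_*^T(\OK)$ sends $h_\mv$ to the class $[\psit_{t_\mv}]$ of the $T$-fixed point in $\OK_{[0]}^\ad = \OK$; note that Pontryagin multiplication by this class realizes $m_{\mv,*}$. Well-definedness across the tensor relation $h_{\lv+\mv} \otimes \xi = h_\lv \otimes ([\psit_{t_\mv}]*\xi)$ for $\mv \in \Qv$ then follows from the general multiplicativity formula below.

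For multiplicativity of $\Phi$, the key identity to establish is
$$m_{\lv,*}(\xi) * m_{\mv,*}(\eta) = m_{\lv+\mv,*}(\xi * \eta)\qquad\text{for } \lv,\mv \in P^\vee,$$
where $*$ denotes the Pontryagin product throughout. Writing $f=\psit_{t_\lv}f_0$ and $g=\psit_{t_\mv} g_0$ with $f_0,g_0 \in \OK$, pointwise multiplication of loops gives
$$(fg)(t)=\psit_{t_{\lv+\mv}}(t)\cdot \Ad\!\bigl(\psit_{t_{-\mv}}(t)\bigr)(f_0(t))\cdot g_0(t),$$
and the adjoint action of the torus-valued loop $\psit_{t_{-\mv}}$ becomes trivial in $T$-equivariant homology via the standard $T$-action on $\OK$ by conjugation, leaving the desired class $[f_0]*[g_0]$.

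Bijectivity of $\Phi$ is then immediate from the first paragraph, since both sides are free $S$-modules on the common indexing set $(P^\vee/\Qv) \times \Waffm$ and $\Phi$ sends generators to generators componentwise. The main obstacle is justifying the equivariance argument behind the multiplicativity formula, which hinges on the standard fact that conjugation by torus-valued loops acts trivially in $T$-equivariant homology; once this is in hand, the rest of the argument is purely formal, reducing the $S$-algebra statement to the $S$-module bookkeeping of the first paragraph.
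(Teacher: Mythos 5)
Your approach and the paper's share the same geometric skeleton (the component decomposition of $\OKad$ and the translation isomorphisms $m_\lv$), but they diverge in how the $S$-module isomorphism is upgraded to an $S$-algebra isomorphism. The paper avoids any pointwise loop computation: it builds the map from the commutative square of $T$-equivariant inclusions $\OK^T \to \OK$, $(\OKad)^T \to \OKad$, $\OK \to \OKad$, all of which are compatible with pointwise multiplication, so functoriality of the equivariant Pontryagin ring hands you the $S$-algebra morphism out of the tensor product for free. Surjectivity then comes from the component decomposition, and injectivity from localization to $\Frac(S)$, where the map becomes $F[\Pv]\otimes_{F[\Qv]} F[\Qv]\to F[\Pv]$. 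Your version instead defines $\Phi$ explicitly on components and checks multiplicativity by hand via $(fg)(t)=\psit_{t_{\lv+\mv}}(t)\cdot\Ad(\psit_{t_{-\mv}}(t))(f_0(t))\cdot g_0(t)$, and you get bijectivity without invoking localization, which is a genuine simplification on that side.

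The one real gap is the justification for triviality of the $\Ad(\psit_{t_{-\mv}})$ factor. Attributing it to ``the standard $T$-action on $\OK$ by conjugation'' conflates two different things: that action is conjugation by \emph{constant} elements of $T$, whereas you are conjugating by the time-varying loop $\psit_{t_{-\mv}}$, which lives in a nontrivial connected component of $\Omega T^{\rm ad}$ and cannot be connected to the identity inside $\Omega T^{\rm ad} \cap \OKad$. The claim is nevertheless true, but it needs an actual argument: for instance, the family $f_0\mapsto \Ad(\psit_{t_{-s\mv}})(f_0)$, $s\in[0,1]$, is a $T$-equivariant homotopy from $c_{\psit_{t_{-\mv}}}$ to the identity --- it lands in $\OK$ because $\Ad\bigl(\psit_{t_{-s\mv}}(0)\bigr)(e)=e=\Ad\bigl(\psit_{t_{-s\mv}}(1)\bigr)(e)$, and it is $T$-equivariant because $T^{\rm ad}$ is abelian --- hence $c_{\psit_{t_{-\mv}},*}=\mathrm{id}$ on $H_*^T(\OK)$. (Equivalently, commutativity of the Pontryagin ring together with $[\psit_{t_{-\mv}}]$ being a unit makes the induced map conjugation by a central unit.) With that supplied, your proof is complete and trades the paper's localization input for an explicit $T$-equivariant homotopy.
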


\begin{proof}
  We have the following inclusions that are compatible with pointwise multiplication and $T$-equivariant inducing $S$-algebra morphisms
  $$\xymatrix{\OK^T \ar[r] \ar[d] & \OK \ar[d] \\
    (\OKad)^T \ar[r] & \OKad,} \qquad
\xymatrix{H_*^T(\OK^T) \ar[r] \ar[d] & H_*^T(\OK) \ar[d] \\
    H_*^T((\OKad)^T) \ar[r] & H_*^T(\OKad).}$$
Recall that we have bijections $(\OKad)^T \simeq P^\vee$ and $\OK^T \simeq Q^\vee$ that are group homomorphisms since $\psit_{t_\lv}\psit_{t_\mv} = \psit_{t_{\lv + \mv}}$. We thus have $H_*^T((\OKad)^T) \simeq S[P^\vee]$ and $H_*^T(\OK^T) \simeq S[Q^\vee]$. In particular, the above diagram induces an $S$-algebra morphism $S[P^\vee] \otimes_{S[Q^\vee]} H_*^T(\OK) \to H_*^T(\OKad)$. The restriction of this map to $\psit_{t_\lv} \otimes H_*^T(\OK^T) \to H_*^T((\OKad)^T)$ is the multiplication $m_\lv$. The above decomposition of $\OKad$ in connected components gives an isomorphism of $S$-modules
$$H_*^T(\OKad) = \bigoplus_{[\lv] \in P^\vee/Q^\vee} H_*^T(\OK_{[\lv]}^\ad)$$
proving that the map $S[P^\vee] \otimes_{S[Q^\vee]} H_*^T(\OK) \to H_*^T(\OKad)$ is surjective.

To prove injectivity, first note that, since $H_*^T(\OK)$ is a free $S$-module and $S[P^\vee]$ is a free $S[Q^\vee]$-module, the $S$-module $S[P^\vee] \otimes_{S[Q^\vee]} H_*^T(\OK)$ is free. We therefore only need to prove the injectivity of the map after base extension to $F = \Frac(S)$ the field of fractions of $S$. Now recall the following general result (see \cite[C.8 Theorem]{kumar}: on the level of $T$-equivariant cohomology we have isomorphisms $H^*_T(\OK) \otimes_S F \simeq H^*_T(\OK^T) \otimes_S F$ and $H^*_T(\OKad) \otimes_S F \simeq H^*_T((\OKad)^T)\otimes_S F$. This induces isomorphisms in $T$-equivariant homology:
$$H_*^T(\OK^T) \otimes_S F \simeq H_*^T(\OK) \otimes_S F 
\textrm{ and }
H_*^T((\OKad)^T) \otimes_S F \simeq H_*^T(\OKad) \otimes_S F.$$
After base change to $F$, since $H_*^T(\OK^T) \simeq F[Q^\vee]$ and $H_*^T((\OKad)^T) \otimes_S F \simeq F[P^\vee]$, our map is given by
$$F[P^\vee] \otimes_{F[Q^\vee]} F[Q^\vee] \to F[P^\vee]$$
and is therefore injective.
\end{proof}

Recall that, as $S$-module, we have an isomorphism $H_*^T(\OK) = M$. In particular the above results identifies $H_*^T(\OKad)$ with the $S$-module $\Mt$ of Definition \ref{defiMtilde}:

\begin{cor}
As $S$-modules, we have: $H_*^T(\OKad) \simeq \Mt$.
\end{cor}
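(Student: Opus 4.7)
The plan is to reduce this corollary directly to Proposition \ref{prop-HOKad-algebre} combined with the identification $H_*^T(\OK) \simeq M$ recalled in the reminder on $H_*^T(\OK)$, and then recognize the resulting tensor product as $\Mt_1$ from Definition \ref{defi:Mtilde}.

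Concretely, I would begin by substituting $H_*^T(\OK) \simeq M$ into the $S$-algebra isomorphism $H_*^T(\OKad) \simeq S[P^\vee] \otimes_{S[\Qv]} H_*^T(\OK)$ to obtain $H_*^T(\OKad) \simeq S[P^\vee] \otimes_{S[\Qv]} M$ as $S$-modules. Next I would use the identifications $S[P^\vee] \simeq S \otimes_\Z \Z[P^\vee]$ and $S[\Qv] \simeq S \otimes_\Z \Z[\Qv]$ to rewrite this as
$$
\bigl(S \otimes_\Z \Z[P^\vee]\bigr) \otimes_{S \otimes_\Z \Z[\Qv]} M
\simeq \Z[P^\vee] \otimes_{\Z[\Qv]} \bigl(S \otimes_S M\bigr)
\simeq \Z[P^\vee] \otimes_{\Z[\Qv]} M = \Mt_1.
$$
The implicit step that needs checking is that the two $\Z[\Qv]$-module structures on $M$ agree: on the geometric side $\Z[\Qv]$ acts via the $S$-algebra map $S[\Qv] = H_*^T(\OK^T) \to H_*^T(\OK)$, whereas on the nil-Hecke side $\Z[\Qv]$ acts through the inclusion $h_{\lv} \mapsto \delta_{t_{\lv}} \in \Aaff$. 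Both actions come from the identification of $\OK^T$ with $\Qv$ via $\lv \mapsto \psit_{t_\lv}$, so they coincide; and since translations act trivially on $S$ the element $\delta_{t_\lv}$ commutes with $S$, so the $\Z[\Qv]$-action extends to an $S[\Qv]$-action compatibly.

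Finally, invoking Proposition \ref{3modules}(1), which identifies $\Mt_1$ with $\Mt$ as $S$-modules, I would conclude $H_*^T(\OKad) \simeq \Mt$. The whole argument is essentially bookkeeping; there is no substantive obstacle beyond verifying that the $\Z[\Qv]$-module structure on $M$ appearing in Proposition \ref{prop-HOKad-algebre} matches the one used to build $\Mt_1$, which follows from the compatibility between the fixed-point inclusion and the nil-Hecke description of $H_*^T(\OK)$.
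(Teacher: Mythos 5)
Your proof is correct and follows essentially the same route as the paper: the paper states the corollary as an immediate consequence of Proposition \ref{prop-HOKad-algebre} together with the identification $H_*^T(\OK)\simeq M$ and Proposition \ref{3modules}(1), and you have simply spelled out the intermediate tensor-product bookkeeping (base change from $S[\Qv]$ to $\Z[\Qv]$ and the compatibility of the two $\Z[\Qv]$-actions on $M$) that the paper leaves implicit.
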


\begin{cor}
  The exists an $\Aafft$-module structure on $H_*^T(\OKad)$ compatible with the $\Aaff$-module structure on $H_*^T(\OK)$. Furthermore, for this structure, we have an isomorphism of $\Aafft$-modules
$H_*^T(\OKad) \simeq \Mt.$
\end{cor}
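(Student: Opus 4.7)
The plan is to transport the $\Aafft$-module structure already available on $\Mt$ across the $S$-module isomorphism $H_*^T(\OKad) \simeq \Mt$ coming from the previous corollary, and then check the compatibility claim.

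First, recall that in the proof of \prop{prop-HOKad-algebre} we established an $S$-algebra isomorphism $H_*^T(\OKad) \simeq S[\Pv] \otimes_{S[\Qv]} H_*^T(\OK)$, and combined with the $S$-module identification $H_*^T(\OK) \simeq M$ and the previous corollary, we obtain an $S$-module isomorphism
$$\Phi : H_*^T(\OKad) \xrightarrow{\ \sim\ } S[\Pv] \otimes_{S[\Qv]} M = \Mt_1 = \Mt.$$
By \prop{3modules}.(1), $\Mt_1$ carries an $\Aafft$-module structure coming from the isomorphism $\Mt_1 \simeq \Aafft/\Jt$ induced by $\varphi_1$ (and equivalent to the ones on $\Mt_2$, $\Mt_3$). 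I transport this $\Aafft$-action to $H_*^T(\OKad)$ via $\Phi$; then by construction $\Phi$ becomes an isomorphism of $\Aafft$-modules, which gives the second assertion of the corollary.

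It remains to check the compatibility with the $\Aaff$-module structure on $H_*^T(\OK)$. Under $\Phi$, the subspace $H_*^T(\OK) \subset H_*^T(\OKad)$ corresponding to the identity component $\OK_{[0]}^{\rm ad}$ is sent to $1 \otimes M \subset \Z[\Pv] \otimes_{\Z[\Qv]} M = \Mt_1$, which is identified with $M$. Since $\varphi_1$ restricts on $\Aaff = 1 \otimes_{\Z[\Qv]} \Aaff \subset \Z[\Pv] \otimes_{\Z[\Qv]} \Aaff$ to the identity into $\Aafft$, and since the ideal $J \subset \Aaff$ is the preimage of $\Jt$ under this inclusion, the $\Aafft$-action restricted to $\Aaff$ and to $1 \otimes M$ coincides with the quotient action $\Aaff \curvearrowright \Aaff/J \simeq M$, which is precisely the original $\Aaff$-module structure on $H_*^T(\OK) \simeq M$ recalled in Subsection on $H_*^T(\OK)$.

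The only potential subtlety, and thus the one point to be careful about, is to make sure that the geometric inclusion $H_*^T(\OK) \hookrightarrow H_*^T(\OKad)$ induced by $\OK = \OK_{[0]}^{\rm ad} \hookrightarrow \OKad$ really does correspond under $\Phi$ to the algebraic inclusion $M \simeq 1 \otimes M \hookrightarrow \Mt_1$; this however is built into the construction of the isomorphism in \prop{prop-HOKad-algebre}, since on the connected component indexed by $[0] = [\Qv]$ the left-multiplication map $m_0$ is the identity, so the summand $H_*^T(\OK_{[0]}^{\rm ad})$ is sent tautologically to $1 \otimes H_*^T(\OK)$. With this identification in hand, both claims of the corollary follow immediately.
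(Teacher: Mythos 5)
Your proposal is correct and follows essentially the same route as the paper: the paper's proof simply transports the $\Aafft$-action from $\Mt_1 = S[\Pv]\otimes_{S[\Qv]} M$ to $H_*^T(\OKad)$ via the $S$-module isomorphism of \prop{prop-HOKad-algebre}, treating the compatibility with the $\Aaff$-action on $H_*^T(\OK)$ as immediate. You go a bit further by explicitly verifying that compatibility, using $\Aaff \cap \Jt = J$ and the observation that the geometric inclusion of the identity component matches the algebraic inclusion $1\otimes M \hookrightarrow \Mt_1$, which is a worthwhile addition but does not change the overall strategy.
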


\begin{proof}
We define the $\Aafft$-module structure on $H_*^T(\OKad)$.
Since we have the isomorphism of $S$-modules $H_*^T(\OKad) \simeq \Mt_1 =S[P^\vee] \otimes_{S[Q^\vee]} M$, we may extend the $\Aaff$-module structure on $M$ to the $\Aafft-$module structure $\Mt$.
\end{proof}

\begin{remark}
The above $\Aafft$-module structure on $H_*^T(\OKad)$ also has a geometric description, see \cite[Proposition 3.3]{cmp4}.
\end{remark}

\begin{remark}
  \label{err1}
The above result shows that our claim on \cite[Page 12]{cmp4} that $\HT(\OKad)$ is the tensor product \emph{ring} $\Z[Z] \otimes_\Z \HT(\OK)$ is wrong: by localization $\HT(\OKad)$ is a subring of $F[\Pv]$ and this Laurent polynomial algebra contains no roots of unity, whereas $\Z[Z] \otimes_\Z \HT(\OK)$ does.
\end{remark}

Recall that $\Wafft$ can be embedded in $\Aafft$ via $w \mapsto \delta_w$. The induced action is denoted by $x \cdot \xit := \delta_x \cdot \xit$ for $x \in \Wafft$ and $\xit \in H_*^T(\OKad)$. 
\begin{cor}
  Let $w \in W$ and $\lv,\mv \in P^\vee$, we have 
  $$wt_\lv \cdot \psit_{t_\mv} := \delta_{wt_\lv} \cdot \psit_{t_\mv} = \psit_{t_{w(\lv + \mv)}}.$$
\end{cor}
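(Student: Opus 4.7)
The plan is to transport the question to the algebraic model $H_*^T(\OKad) \simeq \Aafft/\Jt$ provided by combining the corollary $H_*^T(\OKad) \simeq \Mt$ with the isomorphism $\Mt_1 \to \Aafft/\Jt$ of Proposition \ref{3modules}(1), and then to perform a one-line computation in $\Aafft$.

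The first step is to identify the fixed point $\psit_{t_\mv}$ in $\Aafft/\Jt$. The isomorphism $m_\mv \colon \OK \to \OK^\ad_{[\mv]}$ sends the identity loop $\psit_{t_0}$ to $\psit_{t_\mv}$, and under $H_*^T(\OK) \simeq M$ the class of $\psit_{t_0}$ is the Schubert class $\xi_e$ of the unique zero-dimensional orbit. Hence, in $\Mt_1 = S[P^\vee] \otimes_{S[Q^\vee]} M$, the fixed point $\psit_{t_\mv}$ corresponds to $h_\mv \otimes \xi_e$, and $\varphi_1$ identifies this with $\delta_{t_\mv} \bmod \Jt$. The computation in $\Aafft$ is then
$$\delta_{wt_\lv}\, \delta_{t_\mv} \;=\; \delta_{wt_\lv t_\mv} \;=\; \delta_{w t_{\lv+\mv}} \;=\; \delta_{t_{w(\lv+\mv)}}\, \delta_w,$$
using that $x \mapsto \delta_x$ is a group homomorphism $\Wafft \to \Aafft^\times$ together with the relation $wt_\nv = t_{w(\nv)}w$. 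Comparing with the identification of $\psit_{t_{w(\lv+\mv)}}$ from the first step, and using that $\Jt$ is a left ideal, the statement reduces to proving the congruence $\delta_w \equiv 1 \pmod{\Jt}$ for every $w \in W$.

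This last congruence is the main (and only) technical point. I would expand $\delta_w$ in the $A_u$-basis via Fact \ref{fact_base_change}:
$$\delta_w = A_e + \sum_{e \neq u \leq w} \xi^u(w)\, A_u,$$
using $\xi^e(w) = 1$. For $u \in W \subset \Waff$, Definition \ref{defi:Ax} gives $\At_u = A_u$, so it suffices to check that $u \in W \setminus \{e\}$ implies $u \notin \Wafftm$. This is immediate from Lemma \ref{wafftm}: writing $u$ as $ut_0$, the coweight $\lv = 0$ satisfies $\lv \leq 0$, and the remaining condition ``$\scal{0,\alpha_i} = 0 \Rightarrow u(\alpha_i) > 0$ for all $i$'' forces $u$ to send every simple root to a positive root, hence $u = e$. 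Consequently $A_u \in \Jt$ for each $e \neq u \leq w$, giving $\delta_w \equiv A_e = 1 \pmod{\Jt}$ and closing the argument.
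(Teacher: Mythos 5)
Your proof is correct and follows essentially the same route as the paper: identify $\psit_{t_\mv}$ with $\delta_{t_\mv}\bmod\Jt$ in the algebraic model, compute in $\Qafft$, and reduce to the congruence $\delta_w\equiv 1\pmod\Jt$ for $w\in W$. The paper treats $\delta_{t_\lv}$ and $\delta_w$ in two separate steps and conjugates $\delta_{t_\mv}$ by $\delta_w$, whereas you move $\delta_w$ to the right in one pass, but the key observation is the same; your explicit justification of $\delta_w\equiv 1\pmod\Jt$ via Fact~\ref{fact_base_change} and Lemma~\ref{wafftm} in fact tightens the paper's wording, which says ``since $\delta_{w^{-1}}\in\Jt$'' where it means $1-\delta_{w^{-1}}\in\Jt$.
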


\begin{proof}
  As already explained in the proof of Proposition \ref{prop-HOKad-algebre}, we have $\delta_{t_\lv} \cdot \psit_{t_\mv} = \psit_{t_{\lv+\mv}}$. We need to check that $\delta_w \cdot \psit_{t_\mv} = \psit_{t_{w(\mv)}}$. But our identification of $H_*^T(\OKad)$ with $\Mt$ identifies $\psit_{t_\mv}$ with $h_{\mv} \otimes 1$. Recall that $1 = [\delta] \in \Aafft/\Jt$, so that $h_\mv \otimes 1 = [\delta_{t_\mv}]$ and $\delta_w \cdot \psit_{t_\mv} = \delta_w \cdot [\delta_{t_\mv}] = [\delta_w \delta_{t_\mv}] = [\delta_w \delta_{t_\mv} \delta_{w^-1}]$ since $\delta_{w^{-1}} \in \Jt$. We get
  $$\delta_w \cdot \psit_{t_\mv} = [\delta_w \delta_{t_\mv} \delta_{w^-1}] = [\delta_{t_w(\mv)}] = \psit_{t_w(\mv)}$$
  proving the result.
\end{proof}

\subsection{Compatibility between the ring and the $\Aafft$-module structure}
The above description of $\HT(\OKad)$ as ring and as $\Aafft$-module is not enough for our purposes: we need to be able to multiply two classes of the
form $\sigma \otimes \xi_x$ and $\tau \otimes \xi_y$, see also Remark \ref{err1}. To this end, we recall the definition and properties of $\jad$ given in \cite[\S 3.3]{cmp4}.

\begin{prop}
\label{jad}
There is an $S$-algebra isomorphism $\jad:\HT(\OKad) \to Z_{\Aafft}(S)$. It satisfies:
\begin{enumerate}
 \item $\jad(\xi)\cdot \xi' = \xi \xi'$ for $\xi,\xi' \in \HT(\OKad)$ ;
 \item $\jad(\psit_{t_\lv}) = \delta_{t_\lv}$ for $\lv \in \Pv$.
\end{enumerate}
For $w \in \Wafftm$, $\jad(\xit_w)$ is characterized by the two following properties:
\begin{enumerate}[label=(\alph*)]
 \item $\jad(\xit_w)$ is congruent to $A_w$ modulo $\sum_{x \in W \setminus \{e\}} \Aafft \cdot A_x$ ;
 \item $\jad(\xit_w)$ belongs to $Z_{\Aafft}(S)$.
\end{enumerate}

\end{prop}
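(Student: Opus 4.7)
The strategy is to bootstrap from the classical Peterson $S$-algebra isomorphism $j\colon\HT(\OK)\to Z_{\Aaff}(S)$ using the decomposition $\HT(\OKad)\simeq S[\Pv]\otimes_{S[\Qv]}\HT(\OK)$ established in Proposition~\ref{prop-HOKad-algebre}. The crucial observation is that translations $\delta_{t_\lv}$ for $\lv\in\Pv$ commute with every $s\in S$ inside $\Aafft$: indeed the extended affine Weyl group $\Wafft$ acts on $P$ only through its quotient $\Wafft/\Pv\simeq W$, so translations act trivially on $P$ and hence on $S$. Consequently $S[\Pv]\subset Z_{\Aafft}(S)$ and $Z_{\Aaff}(S)\subset Z_{\Aafft}(S)$, and one builds $\jad$ from the product of these two inclusions.

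Concretely, define $\jad$ on $\HT(\OKad)\simeq S[\Pv]\otimes_{S[\Qv]}\HT(\OK)$ by $\jad(\delta_{t_\lv}\otimes\xi):=\delta_{t_\lv}\,j(\xi)\in\Aafft$ for $\lv\in\Pv$ and $\xi\in\HT(\OK)$. Well-definedness across the tensor product over $S[\Qv]$ is immediate from $j(\psit_{t_\mv})=\delta_{t_\mv}$ for $\mv\in\Qv$ together with multiplicativity of the classical $j$. Property~(2) is the case $\xi=1$; the image lies in $Z_{\Aafft}(S)$ by the two commutation facts just recalled; and property~(1) is the identification of the $\Aafft$-action on $\Mt\simeq\HT(\OKad)$ with Pontryagin product, which is built into the isomorphisms of Proposition~\ref{3modules}.

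For multiplicativity, writing $\xi_1=\psit_{t_\lv}\eta_1$ and $\xi_2=\psit_{t_\mv}\eta_2$, I compare $\delta_{t_\lv}j(\eta_1)\,\delta_{t_\mv}j(\eta_2)$ with $\delta_{t_{\lv+\mv}}j(\eta_1\eta_2)$, so equality reduces to the commutation $j(\eta_1)\,\delta_{t_\mv}=\delta_{t_\mv}\,j(\eta_1)$ in $\Aafft$. The plan is to establish this by passing to the regular representation on $\Mt$: both sides act as Pontryagin multiplication by $\psit_{t_\mv}\eta_1=\eta_1\psit_{t_\mv}$, using the commutativity of the Pontryagin product on the $H$-space $\OKad$. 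Once the action of $Z_{\Aafft}(S)$ on $\Mt$ is shown to be faithful, the equality in $\Aafft$ follows. Faithfulness is equivalent to the $S$-module direct sum $\Aafft=Z_{\Aafft}(S)\oplus\Jt$; bijectivity of $\jad$ then follows from the parallel classical decomposition $\Aaff=Z_{\Aaff}(S)\oplus J$ and a rank comparison within the $S[\Pv]\otimes_{S[\Qv]}-$ structure.

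For the Schubert characterization, write $w\in\Wafftm$ as $w=\tau\wh$ with $\tau\in Z$ and $\wh\in\Waff$; the class $\xit_w$ corresponds under the $\Mt_2$-model of Proposition~\ref{3modules} to $\tau\otimes\xi_\wh$, so $\jad(\xit_w)=\delta_\tau\,j(\xi_\wh)$. Property~(a) then follows by left-multiplying the classical congruence $j(\xi_\wh)\equiv A_\wh$ modulo $\sum_{x\in W\setminus\{e\}}\Aaff\cdot A_x$ by $\delta_\tau$, and~(b) was verified in the second paragraph; uniqueness of such a lift rests again on $\Aafft=Z_{\Aafft}(S)\oplus\Jt$. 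The main technical obstacle I anticipate is precisely establishing this direct sum decomposition: unlike the classical setting, the noncentral twist introduced by $Z$ in the product formula of Proposition~\ref{product3} means one cannot simply tensor with $\Z[Z]$ to deduce it, and one must mimic Peterson's uniqueness argument in the extended setting while tracking how the Dynkin automorphisms $f_\tau$ permute the generators of $S$.
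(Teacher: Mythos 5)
The paper does not prove this proposition: it is explicitly a recall from \cite[\S 3.3]{cmp4} (``we recall the definition and properties of $\jad$ given in \cite[\S 3.3]{cmp4}''), so there is no internal proof to compare against. Judged on its own merits, your construction $\jad(\delta_{t_\lv}\otimes\xi)=\delta_{t_\lv}\,j(\xi)$ is the right starting point, and the observation that $\delta_{t_\lv}$ commutes with $S$ (translations act trivially on $P$ at level zero) is correct and essential. However, your multiplicativity argument is circular. To show $j(\eta_1)\delta_{t_\mv}=\delta_{t_\mv}j(\eta_1)$ via the action on $\Mt$ you assert that both sides act as Pontryagin multiplication by $\psit_{t_\mv}\eta_1$; but to evaluate $j(\eta_1)\delta_{t_\mv}\cdot\xit=j(\eta_1)\cdot(\psit_{t_\mv}\xit)$ you need to know that $j(\eta_1)$ acts on \emph{all} of $\Mt$ by Pontryagin multiplication by $\eta_1$, not just on the submodule $M$; for $\xit'=\psit_{t_\nv}\xi$ with $\xi\in M$ this is exactly the commutation $j(\eta_1)\delta_{t_\nv}=\delta_{t_\nv}j(\eta_1)$ you are trying to prove. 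A non-circular route is purely algebraic: after localizing to $F=\Frac(S)$ one has $Z_{\Aaff}(S)\otimes_S F\subset F[\Qv]\subset F[\Pv]$, a commutative subring of $\Qafft\otimes_S F$ containing $\delta_{t_\nv}$, so the commutation holds over $F$ and descends to $\Aafft$ by torsion-freeness.

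The second obstacle you flag, the direct sum $\Aafft=Z_{\Aafft}(S)\oplus\Jt$, you leave open. It is in fact fillable by the right bookkeeping: writing $a=\sum_{\tau\in Z}\delta_\tau a_\tau$ with $a_\tau\in\Aaff$ and $\tau=ut_\lv$, the condition $a\in Z_{\Aafft}(S)$ is equivalent to $\delta_u a_\tau\in Z_{\Aaff}(S)$ for every $\tau$, which gives $Z_{\Aafft}(S)=\bigoplus_{[\nv]\in\Pv/\Qv}\delta_{t_\nv}Z_{\Aaff}(S)$; since also $\Jt=\bigoplus_{[\nv]}\delta_{t_\nv}J$ and $\Aafft=\bigoplus_{[\nv]}\delta_{t_\nv}\Aaff$, the classical decomposition $\Aaff=Z_{\Aaff}(S)\oplus J$ transports term by term. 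So your instinct that one cannot tensor $\Z[Z]\otimes_\Z$ is right, but tensoring $\Z[\Pv]\otimes_{\Z[\Qv]}$ over translations (i.e.\ the $\varphi_1$-picture) does work once one conjugates by $\delta_u$. As written, though, the proof has a genuine circularity in the multiplicativity step and an unresolved step in the decomposition, so it is not complete.
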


The map $\jad$ has the following equivariance property:

\begin{prop}
\label{jadequivariant}
Let $u \in W, \lv \in \Pv, \xit \in \HT(\OKad)$. Then
\begin{enumerate}
\item
$
\jad(ut_\lv \cdot \xit) = \delta_{t_{u(\lv)}} \delta_u \jad(\xit) \delta_{u^{-1}}
= \delta_{ut_\lv} \jad(\xit) \delta_{u^{-1}}$;
\item $\delta_{t_\lv}\jad(\xit) = \jad(\xit) \delta_{t_{\lv}}.$
\end{enumerate}
\end{prop}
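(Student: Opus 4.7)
The plan is to handle the easy pieces first and reduce statement (1) to a single Weyl-group identity. The second equality in (1) is purely group-theoretic: in $\Wafft$ we have $t_{u(\lv)}u = ut_\lv$, so $\delta_{t_{u(\lv)}}\delta_u = \delta_{ut_\lv}$ in $\Aafft$. For (2), one uses that $\jad$ is an $S$-algebra homomorphism onto $Z_{\Aafft}(S)$, whose image is commutative because $\HT(\OKad)$ is commutative for the Pontryagin product; combined with $\jad(\psit_{t_\lv}) = \delta_{t_\lv}$ this gives
$$\delta_{t_\lv}\jad(\xit) = \jad(\psit_{t_\lv})\jad(\xit) = \jad(\psit_{t_\lv}\xit) = \jad(\xit\,\psit_{t_\lv}) = \jad(\xit)\delta_{t_\lv}.$$

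I would then reduce (1) to the case $\lv=0$. Property (1) of $\jad$ says that the $\Aafft$-action of $\delta_{t_\lv}$ coincides with left Pontryagin multiplication by $\psit_{t_\lv}$, hence $\jad(t_\lv\cdot\xit) = \jad(\psit_{t_\lv}\xit) = \delta_{t_\lv}\jad(\xit)$. If the formula $\jad(u\cdot\xit') = \delta_u\jad(\xit')\delta_{u^{-1}}$ is established for all $\xit'$ and $u\in W$, then applying it with $\xit' = t_\lv\cdot\xit$ yields
$$\jad(ut_\lv\cdot\xit) = \delta_u\,\jad(t_\lv\cdot\xit)\,\delta_{u^{-1}} = \delta_u\delta_{t_\lv}\jad(\xit)\delta_{u^{-1}} = \delta_{ut_\lv}\jad(\xit)\delta_{u^{-1}},$$
which together with the already-proved second equality is statement (1).

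The main content thus lies in proving the Weyl-group case, and this is where I expect to work hardest: I need to know that $u\cdot$ is a \emph{ring} automorphism of $\HT(\OKad)$ for the Pontryagin product. Rather than appealing to a geometric description of the $W$-action, I would argue algebraically. By Proposition~\ref{prop-HOKad-algebre}, $\HT(\OKad)$ is a free $S$-module, and restriction to $T$-fixed points induces an injection $\HT(\OKad) \hookrightarrow \HT(\OKad)\otimes_S F \simeq F[P^\vee]$. The corollary preceding the statement gives $u\cdot\psit_{t_\mv} = \psit_{t_{u(\mv)}}$, so on $F[P^\vee]$ the Weyl action is the ring automorphism induced by the action of $u$ on the group $P^\vee$ (and on coefficients); pulling back through the embedding, $u\cdot$ is a ring automorphism of $\HT(\OKad)$.

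Granted the automorphism property, one has for all $\xit,\xit'\in\HT(\OKad)$
$$\jad(u\cdot\xit)\cdot\xit' = (u\cdot\xit)\,\xit' = u\cdot\bigl(\xit\cdot(u^{-1}\cdot\xit')\bigr) = \delta_u\,\jad(\xit)\,\delta_{u^{-1}}\cdot\xit',$$
where the three equalities follow successively from property (1) of $\jad$, multiplicativity of $u\cdot$, and unfolding the $\Aafft$-module structure. Both $\jad(u\cdot\xit)$ and $\delta_u\jad(\xit)\delta_{u^{-1}}$ lie in $Z_{\Aafft}(S)$ (the latter because $\delta_u S\delta_{u^{-1}} = S$); evaluating at the Pontryagin unit $\xit' = \psit_{t_0}$ and using $\jad(\xi)\cdot\psit_{t_0} = \xi$ forces the two elements of $Z_{\Aafft}(S)$ to agree, completing the proof.
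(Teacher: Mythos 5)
Your proof is correct, and it takes a genuinely different route from the paper's. The paper proves both parts by the same mechanism: check that the left- and right-hand sides are $S$-semilinear in $\xit$ in identical ways, and then (implicitly via localization to $T$-fixed points) reduce to verifying the identity on the classes $\psit_{t_\mv}$, where both sides are computed explicitly as $\delta_{t_{u(\lv)+u(\mv)}}$ (part 1), respectively $\delta_{t_\mv}\delta_{t_\lv}$ (part 2). You instead repackage the content: for (2) you invoke commutativity of the Pontryagin ring $\HT(\OKad)$ together with the algebra-homomorphism property of $\jad$, which gives a one-line proof but uses a fact (homotopy commutativity of $\OKad$) the paper avoids; for (1) you split off the translation part using $\jad(\psit_{t_\lv}) = \delta_{t_\lv}$, and reduce the $W$-part to the assertion that $\delta_u\cdot$ is a semilinear \emph{ring} automorphism of $\HT(\OKad)$. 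You establish this automorphism property by restricting to $F[P^\vee]$, which is exactly where the paper's localization-to-$\psit_{t_\mv}$ computation is hiding, so the two proofs rely on the same localization input, just organized differently. The payoff of your organization is the clean dual argument $\jad(u\cdot\xit)\cdot\xit' = (u\cdot\xit)\xit' = u\cdot(\xit\cdot(u^{-1}\cdot\xit')) = \delta_u\jad(\xit)\delta_{u^{-1}}\cdot\xit'$, followed by evaluating at the unit $\psit_{t_0}$; the closing step uses (correctly) that $c \mapsto c\cdot\psit_{t_0}$ inverts $\jad$ on $Z_{\Aafft}(S)$, so you never need faithfulness of the module. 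One small point worth making explicit: you also need $\delta_u\jad(\xit)\delta_{u^{-1}} \in Z_{\Aafft}(S)$, which you correctly justify from $\delta_u S \delta_{u^{-1}} = S$, but which the paper's direct computation simply bypasses.
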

\begin{proof}
(1) Let $s \in S$ be a scalar, we have:
\begin{itemize}
 \item $\jad(ut_\lv \cdot s\xit) = \jad(u(s) ut_\lv \cdot \xit)=u(s) \jad(u t_\lv \cdot \xit)$ ;
 \item $\delta_{t_{u(\lv)}} \delta_u \jad(s\xit) \delta_{u^{-1}}
 =\delta_{t_{u(\lv)}} \delta_u s \jad(\xit) \delta_{u^{-1}}
 =u(s)\delta_{t_{u(\lv)}} \delta_u \jad(\xit) \delta_{u^{-1}}$.
\end{itemize}
Thus, by semi-linearity, it is enough to prove the result for $\xit=\psit_{t_\mv}$. For $\xit=\psit_{t_\mv}$, we have
$\jad(ut_\lv \cdot \psit_{t_\mv})=\jad(u\cdot \psit_{t_{\lv+\mv}})
=\jad(\psit_{t_{u(\lv)+u(\mv)}}) = \delta_{t_{u(\lv)+u(\mv)}}.$
We also have $\delta_{t_{u(\lv)}} \delta_u \jad(\psit_{t_\mv}) \delta_{u^{-1}}
=\delta_{t_{u(\lv)}} \delta_u \delta_{t_\mv} \delta_{u^{-1}}
= \delta_{t_{u(\lv)+u(\mv)}}.$ Thus the result is proved.

(2) Both terms are $S$-linear so we only need to check this for $\xit  = \psit_{t_\mv}$ but we have $\delta_{t_\lv}\jad(\psit_{t_\mv}) = \delta_{t_\lv} \delta_{t_\mv} = \delta_{t_\mv} \delta_{t_\lv} = \jad(\psit_{t_\mv}) \delta_{t_{\lv}}$.
\end{proof}

\noindent
In particular, the previous Proposition allows computing $\jad$ in terms of $j$:

\begin{example}
Let $\tau_i=v_i t_{-\varpi_i} \in Z$ and let $\xi \in \HT(\OK)$. Then
$$
\jad(\tau_i \cdot \xi) = \delta_{\tau_i} \ j(\xi) \ \delta_{v_i^{-1}}\, .
$$
\end{example}

\medskip
We deduce a formula allowing reducing products in the homology of $\OKad$ to products in the homology of $\OK$:

\begin{cor}
\label{productaffine}
Let $\sigma = u t_\lv, \tau = v t_\mv$ be elements in $Z$. Let $\xit,\xit' \in \HT(\OKad)$. Then
  $$
(\sigma \cdot \xit) \times (\tau \cdot \xit') = \sigma \tau \cdot
(\psi_{\sigma,\tau} \times (v^{-1}_* \xit) \times (u^{-1}_* \xit'))\, ,
$$
where $\psi_{\sigma,\tau} = \psi_{u^{-1}(\mv)-\mv} = \psi_{v^{-1}(\lv)-\lv}$.
\end{cor}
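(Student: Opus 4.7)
The plan is to apply the $S$-algebra isomorphism $\jad\colon H_*^T(\OKad) \to Z_{\Aafft}(S)$ of Proposition \ref{jad} to both sides and verify the resulting identity in $\Aafft$. By multiplicativity of $\jad$ and two applications of Proposition \ref{jadequivariant}(1), the left-hand side becomes
$$
\jad((\sigma \cdot \xit) \times (\tau \cdot \xit')) \;=\; \delta_{ut_\lv}\,\jad(\xit)\,\delta_{u^{-1}v\,t_\mv}\,\jad(\xit')\,\delta_{v^{-1}}.
$$
For the right-hand side, I expand $\jad$ of the triple Pontryagin product using multiplicativity, Proposition \ref{jad}(2) (which turns $\psi_{\sigma,\tau}$ into $\delta_{t_{u^{-1}(\mv)-\mv}}$), and Proposition \ref{jadequivariant}(1) (to handle $v^{-1}_*\xit$ and $u^{-1}_*\xit'$); I then apply Proposition \ref{jadequivariant}(1) once more to the global action of $\sigma\tau = uv\,t_{v^{-1}(\lv)+\mv}$.

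Two structural facts then drive the simplification. First, $Z \simeq \Pv/\Qv$ is abelian, so $\sigma\tau = \tau\sigma$ in $\Wafft$. Expanding both products in semidirect form encodes \emph{both} $uv = vu$ in $W$ and $v^{-1}(\lv)-\lv = u^{-1}(\mv)-\mv$ in $\Pv$, the latter being exactly what makes the two stated expressions for $\psi_{\sigma,\tau}$ equal. Second, by Proposition \ref{jadequivariant}(2), $\delta_{t_\nu}$ commutes with every element of the image of $\jad$, so any translation factor slides freely past $\jad(\xit)$.

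The remaining reduction is a sequence of manipulations in $\Wafft \subset \Aafft$. One combines the three leftmost $\delta$-factors from the right-hand side and conjugates via the relation $t_{v(\nu)}\,v = v\,t_\nu$ to obtain $\delta_{ut_\lv}\,\delta_{t_{vu^{-1}(\mv)}}$; moves $\delta_{t_{vu^{-1}(\mv)}}$ past $\jad(\xit)$; combines $\delta_{t_{vu^{-1}(\mv)}}\,\delta_v\,\delta_{u^{-1}}$ into $\delta_{vu^{-1}\,t_\mv} = \delta_{u^{-1}v\,t_\mv}$ (using $uv=vu$); and finally collapses $\delta_u\,\delta_{v^{-1}u^{-1}} = \delta_{uv^{-1}u^{-1}} = \delta_{v^{-1}}$, again from $uv=vu$. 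The resulting expression coincides with the one for the left-hand side, and injectivity of $\jad$ closes the argument.

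The main difficulty is not conceptual but organizational: one must carefully track how translations conjugate through the finite Weyl part and recognize that the abelianness of $Z$ is used at two distinct steps --- once to match the Weyl parts and once to rewrite the translation parts. No input beyond Propositions \ref{jad} and \ref{jadequivariant} is required.
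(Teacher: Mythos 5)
Your proof is correct and rests on the same three propositions as the paper's (Propositions \ref{jad} and \ref{jadequivariant}, plus the abelianness of $Z$); the only difference is organizational --- you apply $\jad$ to both sides and invoke injectivity, whereas the paper computes the left-hand side as a single chain of equalities inside $\HT(\OKad)$ via the module identity $\jad(\xit)\cdot\xit' = \xit\xit'$ from Proposition \ref{jad}(1), arriving at the right-hand side directly without ever leaving the homology ring.
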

\begin{proof}
Since $Z$ is abelian, $\sigma \tau = \tau \sigma$. We have
$\sigma \tau = ut_\lv vt_\mv = uv t_{v^{-1}(\lv)+\mv}$ and $\tau \sigma = vt_\mv ut_\lv = vu t_{u^{-1}(\mv)+\lv}$.
We get $v^{-1}(\lv)+\lv = u^{-1}(\mv)+\mv$ so $u^{-1}(\mv)-\mv = v^{-1}(\lv)-\lv$, so that $\psi_{\sigma,\tau}$ is well defined. We also get $uv=vu$.

Using Proposition \ref{jadequivariant}, we compute:
$$
\begin{array}{rcl}
(\sigma \cdot \xit) \times (\tau \cdot \xit') & = & \jad(\sigma \cdot \xit) \cdot (\tau \cdot \xit') \\
& = & \delta_u \delta_{t_\lv} \jad(\xit) \delta_{u^{-1}} \delta_v \delta_{t_\mv} \cdot \xit' \\
& = & \delta_u \delta_{t_\lv} \jad(\xit) \delta_v \delta_{u^{-1}} \delta_{t_\mv} \cdot \xit' \\
& = & \delta_u \delta_{t_\lv} \delta_v \jad(v^{-1} \cdot \xit) \delta_{t_{u^{-1}(\mv)}} \delta_{u^{-1}} \cdot \xit' \\
& = & \delta_u \delta_{t_\lv} \delta_v \delta_{t_{u^{-1}(\mv)}} \jad(v^{-1} \cdot \xit) \delta_{u^{-1}} \cdot \xit' \\
& = & \delta_u \delta_{t_\lv} \delta_v \delta_{t_\mv} \delta_{t_{u^{-1}(\mv)-\mv}} \jad(v^{-1} \cdot \xit) \delta_{u^{-1}} \cdot \xit' \\
& = & \sigma \tau \cdot (\psi_{\sigma,\tau} \times (v^{-1}_* \xit) \times (u^{-1}_* \xit'))\, .
\end{array}
$$
\end{proof}

\begin{remark}
In \cite[p.12]{cmp4}, it is claimed that $\HT(\OKad)$ is the tensor product ring $\Z[Z] \otimes_\Z \HT(\OK)$. As explained in Remark \ref{err1}, this is not true. However as the next corollary shows, this is true in the non equivariant homology.
\end{remark}

\begin{cor}
\label{productnonequivariant}
In non equivariant homology, let $\xit,\xit' \in H_*(\OKad)$ and $\sigma,\tau \in Z$, then
$$
(\sigma \cdot \xit) \times (\tau \cdot \xit') = \sigma \tau \cdot (\xit \times \xit') \, .
$$
\end{cor}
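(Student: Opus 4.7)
The plan is to derive this directly from the equivariant formula in Corollary \ref{productaffine} by showing that its two ``correction factors''—the class $\psi_{\sigma,\tau}$ and the Weyl group actions $u^{-1}_*, v^{-1}_*$—both become trivial upon passing to non-equivariant homology. Writing $\sigma = ut_\lv$ and $\tau = vt_\mv$, Corollary \ref{productaffine} gives
$$(\sigma \cdot \xit) \times (\tau \cdot \xit') = \sigma\tau \cdot \bigl(\psi_{\sigma,\tau} \times (v^{-1}_*\xit) \times (u^{-1}_*\xit')\bigr)$$
in $\HT(\OKad)$, with $\psi_{\sigma,\tau} = \psit_{t_{u^{-1}(\mv)-\mv}}$. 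The task thus reduces to two small observations about the reduction map $\HT(\OKad) \to H_*(\OKad)$.

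First, I would argue that $\psi_{\sigma,\tau}$ reduces to the Pontryagin unit. Since $u^{-1}(\mv) - \mv$ lies in $Q^\vee$ (as a difference of a Weyl translate from itself), this class sits in the component $\OK^\ad_{[0]} \simeq \OK$, and represents a single $T$-fixed point in the connected space $\OK$. In $H_*(\OK)$ any point class agrees with the class of the constant loop at the identity, which is by construction the unit of the Pontryagin product.

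Second, I would argue that $W$ acts trivially on $H_*(\OKad)$. Under the identification $\HT(\OKad) \simeq \Mt$ of Proposition \ref{prop-HOKad-algebre}, the action of $w \in W$ via $\delta_w \in \Aafft$ matches, geometrically, the conjugation action on $\OKad$ by a representative $n_w \in N_K(T) \subset K$ (this compatibility is the content of the remark after Proposition \ref{prop-HOKad-algebre}, referring to \cite[Proposition 3.3]{cmp4}). As $K$ is path-connected, $n_w$ may be joined to $e$ by a path in $K$; conjugation along this path furnishes a homotopy from the action of $w$ on $\OKad$ to the identity, so the induced action on non-equivariant homology is trivial.

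Combining these two observations with Corollary \ref{productaffine} gives $(\sigma\cdot \xit)\times(\tau\cdot \xit') = \sigma\tau\cdot(\xit\times\xit')$ in $H_*(\OKad)$. I expect the main (and only) substantive point to be the connectedness argument for triviality of the $W$-action; the vanishing of $\psi_{\sigma,\tau}$ is immediate once one unpacks the definition of the fixed-point class in non-equivariant $H_0$.
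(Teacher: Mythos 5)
Your proof is correct and follows the same route as the paper's: reduce to Corollary \ref{productaffine} and show that both correction factors disappear non-equivariantly. The paper disposes of this in two sentences, asserting that the push-forwards $u^{-1}_*$, $v^{-1}_*$ are trivial and that $\psit_\lv$ restricts to the unit; you supply the justifications the paper leaves implicit. In particular, you correctly note that $u^{-1}(\mv)-\mv \in Q^\vee$ (so that $\psi_{\sigma,\tau}$ lives in the identity component, where the point class is the Pontryagin unit — a point the paper's terse phrasing glosses over), and you give the connectedness-of-$K$ homotopy argument for the triviality of the $W$-action, mirroring the argument in Fact \ref{waction}(3) for $G/B$. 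No gaps; this is the paper's proof with the details written out.
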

\begin{proof}
Indeed, push-forwards $u^{-1}_*$ and $v^{-1}_*$ are trivial in non equivariant homology. Moreover, the equivariant
classes $\psit_\lv$ restrict to the class of a point in $H_*(\OKad)$, which is the unit in $H_*(\OKad)$.
\end{proof}

\subsection{Translations modulo $P$}

We use \cite[Lemma 10.1]{lam-shimozono} and \cite[Corollary 3.15]{cmp4} as a definition:
\begin{equation}
\label{wpaff}
\Wpaff = \left \{ ut_\nv |\, \forall \gamma \in R_P^+,
\scal{\nv,\gamma} =
\left \{
\begin{array}{cl}
0 & \mbox{if } u(\gamma) >0 \\
-1 & \mbox{if } u(\gamma) <0
\end{array}
\right .
\right \}\, 
\end{equation}
\begin{equation}
\label{wpafft}
\Wpafft = \left \{ ut_\nv \in \Wafft \ \Big| \ \forall \gamma \in R_P^+,
\scal{\nv,\gamma} =
\left \{
\begin{array}{cl}
0 & \mbox{if } u(\gamma) >0 \\
-1 & \mbox{if } u(\gamma) <0
\end{array}
\right .
\right \}\, .
\end{equation}
Following \cite[\S 10.2 and 10.3]{lam-shimozono}, we also define
$(W_P)_{\rm aff} = \{wt_\lv \, | \, w \in W_P, \lv \in \Qvp \}$.
Recall, from \cite[Section 3.4]{cmp4} that any element $w \in \Wafft$ can be uniquely factorized as $w_1w_2$ with
$w_1 \in \Wpafft$ and $w_2 \in (W_P)_{\rm aff}$ and $\ell(w) = \ell(w_1) + \ell(w_2)$. We denote $w_1=\pi_P(w)$.
Thus $\Wpafft$ is a set of representatives for the quotient
$\Wafft/(W_P)_{\rm aff}$ which will be relevant for Peterson's isomorphism
(\ref{peterson}).

Following \cite[Section 10.4]{lam-shimozono} and \cite[Section 3.4]{cmp4}, define the ideals $J_P \subset M$ and $\Jt_P \subset \Mt$ as follows:
$$J_P = \sum_{x \in \Waffm \setminus \Wpaff} S \xi_x \textrm{ and } \Jt_P = \sum_{x \in \Wafftm \setminus \Wpafft} S \xit_x.$$

\medskip

The following result corrects \cite[Proposition 3.16]{cmp4} which used the wrong product structure, see Remark \ref{err1}.

\begin{prop}
\label{productpip}
Let $x \in \Wafftm \cap \Wpafft$ and let $\nv \in P^\vee_-$.
Then $x \pi_P(t_\nv) \in \Wafftm \cap \Wpafft$.
Let us write as usual $x=\sigma \xh$ and $\pi_P(t_\nv) = \tau \widehat{\pi_P(t_\nv)}$ with
$\sigma=ut_\lv,\tau=vt_\mv$. Then
$$
(v^{-1}_* \xi_\xh) \times (u^{-1}_* \xi_{\widehat{\pi_P(t_\nv)}}) = \psi_{\sigma,\tau}^{-1} \xi_{\widehat{x \pi_P(t_\nv)}} \textrm{ modulo } J_P.
$$
\end{prop}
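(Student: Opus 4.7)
The plan is to lift the claimed identity to an equation in $\HT(\OKad)$ modulo $\Jt_P$, and to derive it from the corrected product formula of Corollary \ref{productaffine}. I would first establish $x\pi_P(t_\nv) \in \Wafftm \cap \Wpafft$ with length additivity $\ell(x\pi_P(t_\nv)) = \ell(x)+\ell(\pi_P(t_\nv))$. Writing $x\pi_P(t_\nv) = \sigma \xh \tau \widehat{\pi_P(t_\nv)} = \sigma\tau \cdot \tau^{-1}(\xh)\widehat{\pi_P(t_\nv)}$ via Lemma \ref{lemma:actiontau}, this reduces to showing $\tau^{-1}(\xh)\widehat{\pi_P(t_\nv)} \in \Waffm \cap \Wpaff$ with lengths adding --- a classical statement following from \cite{lam-shimozono}, the antidominance of $\nv$, and Lemma \ref{zlength0} (the latter guaranteeing that conjugation by $\tau$ preserves the relevant length and positivity conditions).

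Next I compute the Pontryagin product $\xit_x \times \xit_{\pi_P(t_\nv)}$ in $\HT(\OKad)$ in two ways. On one hand, Corollary \ref{productaffine} applied with $\xit = \xi_\xh$ and $\xit' = \xi_{\widehat{\pi_P(t_\nv)}}$ gives
\[
\xit_x \times \xit_{\pi_P(t_\nv)} = \sigma\tau \cdot \bigl(\psi_{\sigma,\tau} \times (v^{-1}_*\xi_\xh) \times (u^{-1}_*\xi_{\widehat{\pi_P(t_\nv)}})\bigr).
\]
On the other hand, since $\pi_P(t_\nv)$ represents a quantum parameter in Peterson's isomorphism when $\nv$ is antidominant, and using the length additivity and membership established above, this product equals $\xit_{x\pi_P(t_\nv)} = \sigma\tau \cdot \xi_{\widehat{x\pi_P(t_\nv)}}$ modulo $\Jt_P$.

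Equating the two expressions modulo $\Jt_P$ and restricting to the $[\sigma\tau]$-connected component of $\HT(\OKad)$ via the decomposition of Proposition \ref{prop-HOKad-algebre}, both sides become elements of $\HT(\OK)$ under translation by $\sigma\tau$, and $\Jt_P$ pulls back to $J_P$ on this component; cancelling $\sigma\tau$ and dividing by the invertible monomial $\psi_{\sigma,\tau}$ then yields the stated identity modulo $J_P$. The main obstacle will be justifying the Peterson-type reduction $\xit_x \times \xit_{\pi_P(t_\nv)} \equiv \xit_{x\pi_P(t_\nv)}$ modulo $\Jt_P$ together with the compatibility of $\Jt_P$ and $J_P$ under restriction to the $[\sigma\tau]$-component: this is precisely the multiplicative statement on $\HT(\OKad)$ that was mishandled in \cite[Proposition 3.16]{cmp4} via the incorrect product structure, and whose correct proof now rests on the $S$-algebra structure from Proposition \ref{prop-HOKad-algebre} combined with Corollary \ref{productaffine}.
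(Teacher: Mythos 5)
Your proposal is correct and takes essentially the same approach as the paper: compute $\xit_x \times \xit_{\pi_P(t_\nv)}$ once via Corollary~\ref{productaffine} and once via the Peterson--type length-additivity argument inherited from \cite{cmp4}, then equate the two and reduce modulo $\Jt_P$ using the identification $\Jt_P \cap M = J_P$. You fill in a bit more detail on the membership and length-additivity step and on the passage from $\Jt_P$ to $J_P$ by restriction to the $[\sigma\tau]$-component, where the paper simply invokes the arguments of \cite{cmp4}, but the core structure of the argument is identical.
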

\begin{proof}
The proof follows the arguments in \cite{cmp4}. In particular, we get
$$
(\sigma \cdot \xi_\xh) \times (\tau \cdot \xi_{\widehat{\pi_P(t_\nv)}})
= \sigma \tau \cdot \xi_{\widehat{x \pi_P(t_\nv)}} \textrm{ modulo } \Jt_P.
$$
Using the correct product formula given in Corollary \ref{productaffine}, the left hand side is
$$
\sigma \tau \cdot ( \psi_{\sigma,\tau} ( v^{-1}_* \xi_\xh) \times (u^{-1}_* \xi_{\widehat{\pi_P(t_\nv)}}))\, .
$$
This proves the result since $\Jt_P \cap M = J_P$ (as $\sigma\xh \in \Wafftm \Leftrightarrow \xh \in \Waffm$).
\end{proof}

In particular, the case $P=B$ yields:

\begin{cor}
\label{magyar}
Let $x \in \Wafftm$ and let $\nv \in P^\vee_-$.
Then $x t_\nv \in \Wafftm$.
Let us write as usual $x=\sigma \xh$ and $t_\nv = \tau \widehat{t_\nv}$ with
$\sigma=ut_\lv,\tau=vt_\mv$. Then
$$
(v^{-1}_* \xi_\xh) \times (u^{-1}_* \xi_{\widehat{t_\nv}}) = \psi_{\sigma,\tau}^{-1} \xi_{\widehat{x t_\nv}} \, .
$$
\end{cor}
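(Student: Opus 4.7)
The plan is to derive Corollary \ref{magyar} as the special case $P = B$ of Proposition \ref{productpip}. The argument should be a pure specialization, so the main thing to do is check that all the ingredients collapse as expected when $P = B$.

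First, I would note that when $P = B$ the Levi Weyl group is $W_B = \{e\}$, the coroot lattice $Q_B^\vee$ vanishes, and hence $(W_B)_{\rm aff}$ is trivial. The set $R_B^+$ is empty, so the conditions defining $\Wpafft$ in \eqref{wpafft} are vacuous; this gives $\Wpafft = \Wafft$, and the factorization $w = w_1 w_2$ with $w_1 \in \Wpafft$ and $w_2 \in (W_P)_{\rm aff}$ reduces to $w_1 = w$, so the map $\pi_B$ is the identity on $\Wafft$. In particular $\pi_B(t_\nv) = t_\nv$ and $\widehat{\pi_B(t_\nv)} = \widehat{t_\nv}$. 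Consequently, the hypothesis $x \in \Wafftm \cap \Wpafft$ of Proposition \ref{productpip} becomes simply $x \in \Wafftm$, and the conclusion $x \pi_P(t_\nv) \in \Wafftm \cap \Wpafft$ becomes $x t_\nv \in \Wafftm$, which is the first assertion of the corollary.

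Next, since $\Waffm \subset \Waff$, the ideal $J_B = \sum_{x \in \Waffm \setminus \Waff} S \xi_x$ is zero, and likewise $\Jt_B = 0$. Therefore the ``modulo $J_P$'' clause in the conclusion of Proposition \ref{productpip} becomes trivial for $P = B$, and the product formula
$$
(v^{-1}_* \xi_\xh) \times (u^{-1}_* \xi_{\widehat{\pi_P(t_\nv)}}) = \psi_{\sigma,\tau}^{-1} \xi_{\widehat{x \pi_P(t_\nv)}} \ \text{modulo } J_P
$$
becomes the exact equality claimed in Corollary \ref{magyar}. There is no real obstacle here, since everything reduces to substituting $P = B$ into the already-established Proposition \ref{productpip}; the only point to verify carefully is that the definitions of $\Wpafft$, $\pi_P$ and $J_P$ indeed degenerate as described, which follows directly from their definitions when $R_P^+ = \emptyset$.
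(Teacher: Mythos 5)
Your proof is correct and is precisely what the paper intends: the paper introduces the corollary with the phrase ``the case $P=B$ yields'' and leaves the verification to the reader, and you have carried out exactly that verification, checking that $R_B^+ = \emptyset$ makes $\Wpafft = \Wafft$, $\pi_B = \mathrm{id}$, and $J_B = \Jt_B = 0$. No gap.
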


\section{Affine symmetries}

In this section, we correct \cite[Section 3.5]{cmp4}, see Remark \ref{err1} using the correct product formula given in Corollary \ref{productaffine} and Proposition \ref{productpip}. In particular we prove that the formulas given in \cite{cmp4} are correct in the non equivariant setting.

\subsection{Peterson's isomorphism}
\label{subsection-peterson}

Proposition \ref{productpip} is our needed result in the equivariant homology of the affine
Grassmannian. Translating this formula in the quantum cohomology of $G/P$, we prove our
main theorem. We use Peterson's isomorphism proved in \cite{lam-shimozono} to relate
$\HT(\OK)$ and $\QHT(G/P)$.

Let $\eta_P:\Qv \to \Qvp$ be the projection on the coroot subspace generated by simple roots
$\alpha_i$ with $\alpha_i \not \in R_P$.
Peterson's isomorphism is the map
\begin{equation}
\label{peterson}
\fonction{\psi_P}{\HT(\OK)_P}{\Z[\Qvp] \otimes_\Z H_T^*(G/P)}
{\xi_{w\pi_P(t_\lv)} \xi_{\pi_P(t_\mv)}^{-1}}{q_{\eta_P(\lv-\mv)} \sigma^P(w)}
\end{equation}
where $w \in W^P$ and $\lv, \mv \in \Qvm$ with $\Qvm$ the set of antidominant elements in $Q^\vee$.
\begin{remark}
  \begin{enumerate} In the above statement we have:
  \item The space $\HT(\OK)_P$ is a quotient and a localization of $\HT(\OK)$ defined in \cite[\S 2.2]{cmp4}. The family $\{\sigma^P(w),w \in W/W_P\}$ is the Schubert base of $H_T^*(G/P)$, and the element in $\Z[\Qvp]$ corresponding to $\nv \in \Qvp$ is denoted by $q_{\nv}$. We have for $\nv \in \Qvm$ the formula
    $$\deg(q_\nv) = \sum_{\alpha \in R^+ \setminus R_P^+} \scal{\nv,\alpha} = - \ell(t_\nv).$$
      \item This isomorphism is graded. In fact, for very negative coweights $\lv,\mv$, the element $\xi_{w\pi_P(t_\lv)} \xi_{\pi_P(t_\mv)}^{-1}$ has homological degree $\ell(\pi_P(t_\lv)) -\ell(w) - \ell (\pi_P(t_\mv))$, by \cite[Lemma 3.3]{lam-shimozono}. On the other hand, in quantum cohomology, the element $q_{\eta_P(\lv-\mv)} \sigma^P(w)$ has degree
$-\ell(\pi_P(t_\lv)) + \ell (\pi_P(t_\mv))+\ell(w)$.
  \end{enumerate}
\end{remark}

\subsection{A Weyl group action on $\QH^*_T(G/P)$.}

In this subsection, we define an action of the Weyl group on $H^*_T(G/P)$ and on $\QH^*_T(G/P)$ by left translation. We will prove the compatibility of this action with Peterson's isomorphism in the next subsection. Since this action is different from the action defined in Kumar \cite[11.3.4]{kumar} we define it carefully. We start with the action on $G/B$ and then deal with the general situation for $G/P$.

We define an algebraic and a geometric action of the Weyl group $W$ on $H^*_T(G/B)$. We then prove that these actions coincide.

Let $n \in G$ be in the normalizer of $T$ and let $w$ be the corresponding element of the Weyl group. Define the left action $L_n:G/B \to G/B$ by left
multiplication: $L_n\cdot [x]=[n^{-1}x]$. This action is $T$-equivariant if we consider the
$w$-twisted action of $T$ on $G/B$ given by $t \cdot [x]=[w(t)x]$. It therefore induces a
$w$-semilinear map $H^*_T(G/B) \to H^*_T(G/B)$, denoted $L_n^*$:
$L_n^*(s\xi)=w(s)L_n^*(\xi)$ for $s \in S$ and $\xi \in H^*_T(G/B)$.
\begin{fact}
\label{waction}
The above action $L_n^*$ satisfies the following properties:
\begin{enumerate}
\item $L_n^*$ depends on $w$ and not on $n$ itself; it will be denoted by $w^*$ in the sequel.
\item Via the inclusion $H^*_T(G/P) \subset H^*_T(G/B)$ given by pulling back the projection
$G/B \to G/P$, we have $w^* H^*_T(G/P) \subset H^*_T(G/P)$.
\item The induced action of $w^*$ on the non equivariant cohomology $H^*(G/B)$ is trivial.
\end{enumerate}
\end{fact}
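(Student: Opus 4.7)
The plan is to treat the three assertions in order, relying on elementary topological properties of $G$ and $T$ together with the functoriality of the construction. For (1), suppose $n' = nt$ for some $t \in T$ is another representative of $w$. A direct computation with the formula $L_g[x] = [g^{-1}x]$ gives $L_g \circ L_h = L_{hg}$, so $L_{nt} = L_t \circ L_n$ and consequently $L_{nt}^* = L_n^* \circ L_t^*$. It therefore suffices to show $L_t^* = \id$ on $H^*_T(G/B)$ for $t \in T$. Since $t$ represents the identity in $W$, no twist is involved and $L_t$ is $T$-equivariant for the standard $T$-action. Using that $T$ is connected, I would fix a path $\gamma:[0,1]\to T$ from $1$ to $t$ and observe that $s \mapsto L_{\gamma(s)}$ is a $T$-equivariant homotopy from $\id$ to $L_t$, which yields $L_t^* = \id$ in equivariant cohomology.

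For (2), the key observation is that left multiplication by $n^{-1}$ descends along the quotient map $\pi: G/B \to G/P$, so that $\pi \circ L_n^{G/B} = L_n^{G/P} \circ \pi$. Moreover, this identity is equivariant with respect to the corresponding $w$-twisted $T$-actions on source and target. Pulling back, $\pi^*$ intertwines the two copies of $L_n^*$, which is precisely the statement that $L_n^*$ preserves the image of $\pi^*$, i.e.\ preserves $H^*_T(G/P)$ viewed as a subring of $H^*_T(G/B)$.

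For (3), I would exploit the fact that $G$ is connected: choosing any path $\gamma$ in $G$ from $1$ to $n$ produces a (non-equivariant) homotopy $s \mapsto L_{\gamma(s)}$ from $\id_{G/B}$ to $L_n$, from which $L_n^* = \id$ on $H^*(G/B)$ follows immediately. The main care throughout is bookkeeping around the two distinct $T$-actions on $G/B$ and the resulting $w$-semilinearity of $L_n^*$; in (1) this is what forces one to reduce to $t \in T$ where the twist disappears, and in (3) the twist becomes irrelevant after forgetting the equivariance. I do not expect a serious obstacle beyond keeping these conventions straight and verifying that the homotopies produced can indeed be chosen equivariant where needed.
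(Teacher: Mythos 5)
Your proof is correct and follows essentially the same strategy as the paper's: connectedness plus a (equivariant) homotopy for (1) and (3), and functoriality of pullback along the $T$-equivariant projection $G/B \to G/P$ for (2). The one place you are more careful than the paper is in (1): rather than directly invoking local constancy of $n \mapsto L_n^*\xi$ across $N$ — which requires implicitly fixing the component (and hence the twist) — you explicitly factor $L_{nt}^* = L_n^* \circ L_t^*$ and reduce to the untwisted statement $L_t^* = \id$ for $t\in T$, where the equivariant homotopy is unambiguous; this is a cleaner way of handling the $w$-semilinearity, but it is the same underlying idea.
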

\begin{proof}
(1) Let $N$ denote the normalizer of $T$. The map $N \times G/B \to G/B,(n,[x]) \mapsto L_{n} \cdot [x]$
is continuous and therefore for $\xi \in H^*_T(G/B)$, the map $N \mapsto L_{n}^* \xi$ is locally
constant.

(2) For $n$ in the normalizer of $T$, we have a commutative diagram:
$$
\begin{tikzcd}
G/B \arrow[r,"L_n^B"] \arrow[d] & G/B \arrow[d] \\
G/P \arrow[r,"L_n^P"] & G/P
\end{tikzcd}
$$
Here we made a difference between the action of $n$ on $G/B$ and $G/P$ using superscripts.
It follows that for $\xi \in H^*_T(G/P)$, we have
$w^* \xi = (L_n^B)^* \xi = (L_n^P)^* \xi \in H^*_T(G/P)$.

(3) For $g \in G$, we can consider the action of left translation $L_g^*$ on non equivariant
cohomology $H^*(G/B)$. By the same argument as in (1), this action is trivial. In particular, for $g=n$ in $N$, we obtain that the action $L_n^*$ on non equivariant cohomology is trivial.
\end{proof}

Recall that $W$ can be embedded in $\Aaff$ via $v \mapsto \delta_v$.
\begin{defi}
Let $w \in W$.
Consider $H^*_T(G/B)$ as the dual of $H_*^T(G/B) \subset \Aaff$ and set
$$(w \bullet f)(x) = f(\delta_{w^{-1}} x) \textrm{ for $x \in H_*^T(G/B) = \A$}.$$
\end{defi}

\begin{prop}
For $f \in H^*_T(G/B)$ and $w \in W$, we have $w \bullet f = w^*f$.
\end{prop}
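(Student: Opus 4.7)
\emph{Plan.} The statement asserts that the algebraic action $w \bullet$ on $H^*_T(G/B)$, defined as the dual of left multiplication by $\delta_{w^{-1}}$ on $H_*^T(G/B)=\A$, agrees with the geometric pull-back $w^* = L_n^*$. My strategy is to identify the push-forward $(L_n)_*$ on equivariant homology with left multiplication by $\delta_{w^{-1}}$ in $\Aaff$, and then invoke the standard adjunction between pull-back on cohomology and push-forward on homology.

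First, I analyse $(L_n)_*$ on fixed points. Since $n \in N_G(T)$ lifts $w$ and $T \subset B$, for every $v \in W$ we have $L_n(vB) = n^{-1}vB = w^{-1}vB$, so $L_n$ permutes $(G/B)^T$ via $v \mapsto w^{-1}v$. After localising to $F = \Frac(S)$, Kumar's identification $H_*^T(G/B)\otimes_S F = \bigoplus_{v \in W} F \cdot \delta_v$ turns the above into $(L_n)_*\delta_v = \delta_{w^{-1}v} = \delta_{w^{-1}}\delta_v$, where the product is taken in $\Aaff$. The commutation $\delta_{w^{-1}} s = w^{-1}(s)\delta_{w^{-1}}$ in $\Aaff$ then extends this identification: $(L_n)_*$ coincides with left multiplication by $\delta_{w^{-1}}$ on all of $H_*^T(G/B)$.

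Second, I apply the adjunction $\scal{L_n^*f, x} = \scal{f, (L_n)_* x}$, which holds since $L_n$ is an isomorphism. Substituting the formula $(L_n)_*x = \delta_{w^{-1}}x$ from the first step gives $\scal{w^*f, x} = \scal{f, \delta_{w^{-1}}x} = (w \bullet f)(x)$ for every $x \in \A$, so $w^*f = w \bullet f$ as elements of $H^*_T(G/B)$.

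The main obstacle is in the first step: because $L_n$ is $T$-equivariant only for the $w$-twisted $T$-action on the source, the push-forward $(L_n)_*$ is $w$-semilinear over $S$, and one must verify that this twist is exactly encoded by the commutation $\delta_{w^{-1}} s = w^{-1}(s)\delta_{w^{-1}}$ in $\Aaff$. Once this compatibility is matched, the remainder of the argument is a direct duality computation on the fixed-point basis $\{\delta_v\}_{v \in W}$.
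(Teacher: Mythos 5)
Your argument is correct and is essentially a dualization of the paper's proof. The paper computes both actions directly on the localized fixed-point basis: for $f = \xi^v$ it shows $(w \bullet \xi^v)(\delta_u) = \xi^v(\delta_{w^{-1}}\delta_u) = \xi^v(\delta_{w^{-1}u})$ using the definition of $\bullet$ and the product rule in $\Aaff$, and identifies this with $(w^*\xi^v)(\delta_u)$ because $L_n$ sends the $T$-fixed point indexed by $u$ to the one indexed by $w^{-1}u$. You package the same geometric input into the identity $(L_n)_*\delta_u = \delta_{w^{-1}u} = \delta_{w^{-1}}\delta_u$ on the homology side and then invoke the pushforward--pullback adjunction. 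This is a legitimate alternative route and arguably more conceptual, since it names the map $(L_n)_*$ that implicitly underlies the paper's Kronecker-delta bookkeeping. Two small points: (i) the direction of the semilinearity is reversed from what you state -- since $\delta_{w^{-1}}s = w^{-1}(s)\delta_{w^{-1}}$, left multiplication by $\delta_{w^{-1}}$, hence $(L_n)_*$, is $w^{-1}$-semilinear rather than $w$-semilinear (while $L_n^*$ is $w$-semilinear, as the paper notes); (ii) the adjunction $\scal{L_n^*f,x}=\scal{f,(L_n)_*x}$ needs a word about compatibility with this twist, but this reduces, as you already observe, to checking the identity on the $\delta_u$ basis over $\Frac(S)$ and then extending by localization -- precisely the opening step in the paper's proof.
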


\begin{proof}
Using ${\rm Frac}(S)$-linearity, we only need to compare these actions on the elements $\xi^v$. We have
$(w \bullet \xi^v)(\delta_u) = \xi^v(\delta_{w^{-1}} \delta_u) = \xi^v(\delta_{w^{-1}u})
= \delta_{v,w^{-1}u} = \xi^v(w^{-1}u) = (w^*\xi^v)(u)$, proving the result.
\end{proof}

\begin{cor}
\label{coro-action-sP}
Let $\alpha$ be a simple root and $w \in W^P$. We have
$$(s_\alpha)^*\sigma^P(w) = \left\{
\begin{array}{ll}
\sigma^P(w) & \textrm{if } s_\alpha w > w ; \\
\sigma^P(w) - \alpha \sigma^P(s_\alpha w) & \textrm{if } s_\alpha w < w. \\
\end{array}\right.$$
\end{cor}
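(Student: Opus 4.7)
The plan is to transport the problem to $H^*_T(G/B)$ via the pullback, compute $(s_\alpha)^*$ on the dual basis using the previous proposition, and then verify that the resulting formula lies in $H^*_T(G/P)$.

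First I would use the fact that, for $w \in W^P$, the pullback of $\sigma^P(w)$ along $G/B \to G/P$ is the Schubert class $\xi^w \in H^*_T(G/B)$, characterized by $\xi^w(A_u) = \delta_{w,u}$ for $u \in W$. By the previous proposition, it suffices to compute $s_\alpha \bullet \xi^w$, where $(s_\alpha \bullet \xi^w)(a) = \xi^w(\delta_{s_\alpha} a)$. I would then use the identity $\delta_{s_\alpha} = 1 - \alpha A_\alpha$ (with $A_\alpha = A_{s_\alpha}$, cf.\ Definition \ref{defi:Aw}) to get
\[
(s_\alpha \bullet \xi^w)(A_u) \;=\; \xi^w(A_u) \;-\; \alpha\,\xi^w(A_\alpha A_u),
\]
where I am using $S$-linearity of the pairing to move the scalar $\alpha$ outside.

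Next I would evaluate $A_\alpha A_u$ using the standard product rule in the nil-Hecke algebra: it equals $A_{s_\alpha u}$ if $s_\alpha u > u$ and vanishes otherwise. This gives two cases:
\[
(s_\alpha \bullet \xi^w)(A_u) \;=\;
\begin{cases}
\delta_{w,u} - \alpha\,\delta_{w,\,s_\alpha u} & \text{if } s_\alpha u > u,\\[2pt]
\delta_{w,u} & \text{if } s_\alpha u < u.
\end{cases}
\]
Since $(s_\alpha \bullet \xi^w) = \sum_v c_v \xi^v$ with $c_v = (s_\alpha \bullet \xi^w)(A_v)$, only the values $v = w$ and $v = s_\alpha w$ can contribute. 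A short case analysis then yields $s_\alpha \bullet \xi^w = \xi^w$ when $s_\alpha w > w$, and $s_\alpha \bullet \xi^w = \xi^w - \alpha\,\xi^{s_\alpha w}$ when $s_\alpha w < w$.

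Finally, to reinterpret the answer in $H^*_T(G/P)$ in the second case, I would check that $s_\alpha w \in W^P$ whenever $w \in W^P$ and $s_\alpha w < w$. Indeed, for $\gamma \in R_P^+$ we have $w(\gamma) > 0$, and $w(\gamma) \neq \alpha$ because $s_\alpha w < w$ forces $w^{-1}(\alpha) < 0$ while $\gamma \in R^+$; consequently $s_\alpha w(\gamma) = s_\alpha(w(\gamma)) > 0$, so $s_\alpha w \in W^P$ and $\xi^{s_\alpha w}$ coincides with the pullback of $\sigma^P(s_\alpha w)$. This gives the stated formula. The only substantive step is this combinatorial verification ensuring $s_\alpha w \in W^P$, so that the second-case formula actually descends to $H^*_T(G/P)$ as predicted by Fact \ref{fact:waction}(2); everything else is a direct computation in the nil-Hecke algebra together with the $S$-linearity of the dual pairing.
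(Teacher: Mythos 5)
Your proposal is correct and follows essentially the same route as the paper: pull back to $H^*_T(G/B)$, apply the identity $w\bullet f=w^*f$ from the preceding proposition, expand $\delta_{s_\alpha}=1-\alpha A_\alpha$, use the nil-Hecke product rule, and read off the coefficients in the Schubert basis. The paper relegates the verification that $s_\alpha w\in W^P$ (when $s_\alpha w<w$) to the remark following the corollary, where it is phrased via inversion-set containment; your direct check that $w(\gamma)\ne\alpha$ for $\gamma\in R_P^+$ is an equivalent formulation of the same fact.
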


\begin{proof}
We compute $((s_\alpha)^*\sigma^P(w))(A_u)  = \sigma^P(w)(\delta_{s_\alpha}A_u) = \sigma^P(w)((1 - \alpha A_\alpha)A_u) = \sigma^P(w)(A_u) - \alpha \sigma^P(w)(A_\alpha A_u)$. Now we have 
$$A_\alpha A_u = \left\{
\begin{array}{ll}
0 & \textrm{if } s_\alpha u < u \\
A_{s_\alpha u} & \textrm{if } s_\alpha u > u. \\
\end{array}\right.$$
Since $\sigma^P(w)(A_v) = \delta_{v,w}$, we get
$$((s_\alpha)^*\sigma^P(w))(A_u) = \left\{
\begin{array}{ll}
\delta_{u,w} & \textrm{if } s_\alpha u < u \\
\delta_{u,w} - \alpha \delta_{s_\alpha u,w} & \textrm{if } s_\alpha u > u. \\
\end{array}\right.$$
This in turn gives the result.
\end{proof}

\begin{remark}
\begin{enumerate}
    \item Note that, for $\alpha$ simple, the two conditions $w \in W^P$ and $s_\alpha w < w$ imply the inclusion $s_\alpha w \in W^P$ since the inversion set of $s_\alpha w$ is contained in the inversion set of $w$. In particular, in the second case of the above formula, the class $\sigma^P(s_\alpha w)$ is well defined.
\item This formula also shows that the action $w^*$ is trivial in the non equivariant setting (indeed, in that case wet set $\alpha = 0$). 
\end{enumerate}
\end{remark}

The action $w^*$ is extended to $\QH^*_T(G/P)$ by linearity on quantum parameters.

\subsection{Compatibility of Peterson's isomorphism}

In this subsection we prove that Peterson's isomorphism is compatible with the actions $u_*$ in homology and $u^*$ in cohomology. We start with a useful lemma.

\begin{lemma}
\label{beta}
Let $w \in W^P$ and let $\lv \in \Qvm$ be such that $x = w\pi_P(t_\lv) \in \Wpaff$. Write $x = w \pi_P(t_\lv) = v t_\mv$ with $v \in W$ and $\mv \in Q^\vee$.

Let $\alpha$ be a simple root and let $\beta = w^{-1}(\alpha)$, $\beta' = v^{-1}(\alpha)$.
\begin{enumerate}
\item We have $w^{-1}v \in W_P$ and $\mv \in Q^\vee_-$.
\item We have $\beta \in R_P \Longleftrightarrow \beta' \in R_P$.
\item We have $s_\alpha x \in \Waffm \Longleftrightarrow \scal{\mv,\beta} \neq 0$.
  \item We have $s_\alpha x \in \Wpaff \Longleftrightarrow \beta \not \in R_P \Longleftrightarrow s_\alpha w \in W^P$.
  \item We have the equivalence:
    $$\left( s_\alpha x \in \Waffm \cap \Wpaff \textrm{ and } \ell(s_\alpha x) > \ell(x) \right) \Longleftrightarrow \left( s_\alpha w \in W^P \textrm{ and } \ell(s_\alpha w) < \ell(w)\right).$$
\end{enumerate}
\end{lemma}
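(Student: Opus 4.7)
The plan is to unpack $\pi_P(t_\lv)$ via the standard factorisation $\Wafft = \Wpafft \cdot (W_P)_{\rm aff}$. Since $t_\lv$ has trivial Weyl part, one finds $\pi_P(t_\lv) = u_0 \, t_{\nu^\vee}$ with $u_0 \in W_P$ and $\nu^\vee = u_0^{-1}(\lv) + \mu^\vee$ for some $\mu^\vee \in Q^\vee_P$. Hence $x = w u_0 \cdot t_{\nu^\vee}$, so $v = w u_0$ and $\mv = \nu^\vee$; this identification is the backbone of the whole proof.

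For (1), directly $w^{-1}v = u_0 \in W_P$. The antidominance $\mv \in Q^\vee_-$ combines the defining constraint \eqref{wpaff} of $\Wpaff$ (which forces $\scal{\mv,\gamma} \in \{0,-1\}$ for $\gamma \in R_P^+$, hence $\scal{\mv,\alpha_i} \leq 0$ for $i \in I_P$) with the antidominance of $\lv$ and the explicit formula $\mv = u_0^{-1}(\lv) + \mu^\vee$ (for $i \notin I_P$, the contribution of $\mu^\vee \in Q^\vee_P$ is controlled via the Cartan matrix off-diagonals together with $\lv \in \Qvm$). For (2), $\beta' = v^{-1}(\alpha) = u_0^{-1}(\beta)$, and $u_0 \in W_P$ preserves $R_P$ setwise.

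For (4), apply \eqref{wpaff} to both $x = v t_\mv$ and $s_\alpha x = (s_\alpha v) t_\mv$. Their conditions on $\gamma \in R_P^+$ differ only when $v(\gamma) = \pm \alpha$, i.e.\ $\gamma = \pm \beta'$; by (2) this occurs in $R_P^+$ iff $\beta \in R_P$. If $\beta \in R_P$ the required value of $\scal{\mv,\pm\beta'}$ for $x$ contradicts the one needed for $s_\alpha x$, so $s_\alpha x \notin \Wpaff$; if $\beta \notin R_P$ the two sets of conditions agree and $s_\alpha x \in \Wpaff$. For the second equivalence, $w \in W^P$ is characterised by $w(R_P^+) \subset R^+$, so $s_\alpha w$ loses this property exactly when some $\gamma \in R_P^+$ satisfies $w(\gamma) = \alpha$, i.e.\ $\gamma = \beta$; moreover $w \in W^P$ forces $\beta \notin R_P^-$ (else $w(-\beta) = -\alpha$ with $-\beta \in R_P^+$ would contradict $w \in W^P$), giving $\beta \in R_P \Longleftrightarrow \beta \in R_P^+$.

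For (3), apply Lemma \ref{wafftm} to $s_\alpha x = (s_\alpha v) t_\mv$, using $\mv \leq 0$ from (1). One needs $s_\alpha v(\alpha_i) > 0$ for every simple $\alpha_i$ with $\scal{\mv,\alpha_i} = 0$; comparing with $x \in \Waffm$ (so $v(\alpha_i) > 0$), the only obstruction is $v(\alpha_i) = \alpha$, i.e.\ $\alpha_i = \beta'$. Hence $s_\alpha x \notin \Waffm$ iff $\beta'$ is simple and $\scal{\mv,\beta'} = 0$; this is then rewritten as $\scal{\mv,\beta} = 0$ using $\beta = u_0(\beta')$, the form $\mv = u_0^{-1}(\lv) + \mu^\vee$, and the constraints already obtained in (1), (2), and (4). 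For (5), combine (3) and (4) to identify $s_\alpha x \in \Waffm \cap \Wpaff$ with $\beta \notin R_P$ and $\scal{\mv,\beta} \neq 0$, then translate the length conditions via Remark \ref{inversions}: $\ell(s_\alpha w) < \ell(w) \Longleftrightarrow \beta < 0$, while $\ell(s_\alpha x) > \ell(x)$ is equivalent to $x^{-1}(\alpha) = \beta' + \scal{\beta',\mv}\epsilon$ being a positive affine root. A short case analysis on the sign of $\beta$ and the pairing $\scal{\mv,\beta'}$ concludes.

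The main obstacle is step (3): the characterisation of $\Waffm$ naturally produces a condition on $\beta'$, but the lemma states the condition on $\beta$; since $\scal{\mv,\beta'} = \scal{u_0(\mv),\beta}$ and $u_0$ is a nontrivial element of $W_P$ in general, bridging these via $\beta' = u_0^{-1}\beta$ and the explicit form of $\mv$ is the delicate technical point.
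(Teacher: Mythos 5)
Your decomposition of $x = v t_\mv$ with $v = w u_0$ and $u_0 \in W_P$ is exactly the paper's starting point, and parts (2), (4), (5) track the paper's argument closely; but there are two issues worth flagging.

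For part (1), the paper obtains $\mv \in Q^\vee_-$ in one line, directly from $x = v t_\mv \in \Waffm$ and Definition \ref{defwafftm}. (Membership of $x$ in $\Waffm$ is implicit in the hypothesis and needed anyway for (3) and (5).) Your detour through the identity $\mv = u_0^{-1}(\lv) + \mu^\vee$ does not work as stated: the coefficients of $\mu^\vee \in Q^\vee_P$ are not controlled in sign, so the claim that ``Cartan matrix off-diagonals'' settle the case $i \notin I_P$ is a genuine gap, not a routine verification.

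For part (3), what you call ``the delicate technical point'' is in fact a typo in the statement of the lemma: (3) should read $\scal{\mv,\beta'} \neq 0$, not $\scal{\mv,\beta} \neq 0$. There is no bridge to build. This is visible both in the paper's own proof of (3), which ends with a condition on $\beta'$, and — decisively — in the paper's proof of (5), which invokes (3) in the form ``$\scal{\mv,\beta'} \neq 0$ and by (3), this implies $s_\alpha x \in \Waffm$''. Indeed $\scal{\mv,\beta} - \scal{\mv,\beta'} = \scal{\mv, u_0(\beta')-\beta'}$ with $u_0(\beta')-\beta' \in Q_P$, and the $\Wpaff$ constraint $\scal{\mv,\alpha_j}\in\{0,-1\}$ for $j \in I_P$ gives no reason for this difference to vanish. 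So you should prove the $\beta'$ version and note the misprint rather than attempt a conversion. Incidentally, your finer criterion ``$\beta'$ simple and $\scal{\mv,\beta'}=0$'' collapses to ``$\scal{\mv,\beta'}=0$'' once $x \in \Waffm$ is in force: if $\beta' > 0$ and $\scal{\mv,\beta'}=0$, writing $\beta'$ as a nonnegative sum of simple roots $\alpha_i$ with $\scal{\mv,\alpha_i}=0$ expresses $v(\beta')=\alpha$ as a nonnegative sum of positive roots $v(\alpha_i)$, which forces $\beta'$ to be a single simple root; so the two formulations agree.
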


\begin{proof}
  (1) By \cite[Lemma 10.7]{lam-shimozono}, we have $\pi_P(t_\lv) = u t_\mv$ with $u \in W_P$. This give $w^{-1}v = u \in W_P$. Since $vt_\mv \in \Waffm$ we have $\mv \in Q^\vee_-$.

  (2) Since $u = w^{-1}v \in W_P$ and $\beta = u(\beta')$, we have $\beta \in R_P \Leftrightarrow \beta' \in R_P$.

  (3) We have $v t_\mv \in \Waffm$ therefore $\mv \in Q^\vee_-$ and for $\gamma >0$, we have the implication $(\scal{\mv,\gamma} = 0 \Rightarrow v(\gamma) > 0)$.
  The condition $s_\alpha vt_\mu \in \Waffm$ is thus equivalent to $(\scal{\mv,\gamma} = 0 \Rightarrow s_\alpha v(\gamma) > 0)$ for $\gamma >0$. But since for $\gamma = \beta'$, the roots $v(\gamma)$ and $s_\alpha v(\gamma)$ have opposite signs, the condition $s_\alpha v t_\mv \in \Waffm$ is equivalent to $\scal{\mv,\beta'} = 0$.

  (4) We have $v t_\mv \in \Wpaff$ therefore, for $\gamma \in R_P^+$, we have the equivalences $(\scal{\mv,\gamma} = 0 \Leftrightarrow v(\gamma) > 0)$ and $(\scal{\mv,\gamma} = -1 \Leftrightarrow v(\gamma) < 0)$.
  The condition $s_\alpha vt_\mu \in \Wpaff$ is equivalent to having the equivalences $(\scal{\mv,\gamma} = 0 \Leftrightarrow s_\alpha v(\gamma) > 0)$ and $(\scal{\mv,\gamma} = -1 \Leftrightarrow s_\alpha v(\gamma) < 0)$. Since for $\gamma = \beta'$, the roots $v(\gamma)$ and $s_\alpha v(\gamma)$ have opposite signs, the last equivalences occur if and only if $\beta' \not\in R_P$. This in turn is equivalent to $\beta \not\in R_P$ by (2).

  For the last equivalence, note that by definition, we have $(s_\alpha w \in W^P \Leftrightarrow s_\alpha w(R_P^+) \subset R^+)$. Since  $w \in W^P$, we have $w(R_P^+) \subset R^+$. Since the inversion sets of $w$ and $s_\alpha w$ only differ by $\beta$ (or its opposite, depending on the sign of $\beta$) we get the last equivalence. 

  (5) Note that we have the equivalence $(\ell(s_\alpha w) < \ell(w) \Leftrightarrow \beta < 0)$. We therefore need to prove that the left hand side of the equivalence is equivalent to $\beta \not\in R_P$ and $\beta < 0$. Note that since $w \in W^P$, this is equivalent to $\beta < 0$.

  First assume that $s_\alpha x \in \Waffm \cap \Wpaff$ and $\ell(s_\alpha x) > \ell(x)$. By \cite[Lemma 3.3]{lam-shimozono}, since $v t_\mv,s_\alpha v t_\mv \in Waffm$, we have $\ell(s_\alpha x) = \ell(t_\mv) - \ell(s_\alpha v)$ and $\ell(x) = \ell(t_\mv) - \ell(v)$. In particular, we have $\ell(s_\alpha v) < \ell(v)$, thus $\beta' < 0$. Since $s_\alpha x \in \Wpaff$, we also have $\beta \not\in R_P$ thus $\beta' \not\in R_P$. Now, since $u \in W_P$, this implies  $\beta < 0$.

  Conversely, assume $\beta < 0$. By the above arguments, this implies $\beta \not\in R_P$ and thus $s_\alpha x \in \Wpaff$. This also implies $\beta' \not\in R_P$ and since $u \in W_P$ and $\beta' = u^{-1}(\beta)$, we get $\beta' < 0$. Since $v t_\mu \in \Waffm$, $-\beta' > 0$ and $v(-\beta') = - \alpha < 0$, we must have $\scal{\mv,\beta'} \neq 0$ and by (3), this implies $s_\alpha x \in \Waffm$.
\end{proof}
 
We have the following equivariance property of $\psi_P$.

\begin{prop}
\label{psi-equivariant}
For $\xi \in \HT(\OK)_P$, we have $\psi_P(u_* \xi) = u^* \xi$.
\end{prop}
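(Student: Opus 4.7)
The plan is to reduce the statement to simple reflections $u=s_\alpha$ and then to compare the two sides case by case. Both actions are multiplicative: $u^*$ is an algebra automorphism of $H^*_T(G/P)$ by construction, and the homological action $u_*=\delta_u\cdot$ is multiplicative for the Pontryagin product because the identity $j(u\cdot\xi)=\delta_u\,j(\xi)\,\delta_{u^{-1}}$ (the specialization $\lv=0$ of Proposition \ref{jadequivariant}(1)) combined with $j(\xi)\xi'=\xi\xi'$ (Proposition \ref{jad}) yields $u\cdot(\xi\xi')=(u\cdot\xi)(u\cdot\xi')$; this extends to inverses in the localization $\HT(\OK)_P$. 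Both actions are $u$-semilinear over $S$, and $\psi_P$ is $S$-linear. Since $W$ is generated by simple reflections, it would suffice to verify $\psi_P(s_\alpha\cdot\xi)=s_\alpha^*\psi_P(\xi)$ on each generator $\xi=\xi_{w\pi_P(t_\lv)}\xi_{\pi_P(t_\mv)}^{-1}$ with $w\in W^P$ and $\lv,\mv\in\Qvm$.

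Next I would dispose of the denominator. Writing $\delta_{s_\alpha}=1-\alpha A_\alpha$, one has $s_\alpha\cdot\xi_{\pi_P(t_\mv)}=\xi_{\pi_P(t_\mv)}-\alpha\,A_\alpha\cdot\xi_{\pi_P(t_\mv)}$. The right-hand side of the equivalence in Lemma \ref{beta}(5) applied with $w=e$ fails, since $\ell(s_\alpha)>\ell(e)=0$; so $A_\alpha\cdot\xi_{\pi_P(t_\mv)}$ is either zero or of the form $\xi_{s_\alpha\pi_P(t_\mv)}\in J_P$. Modulo $J_P$ this gives $s_\alpha\cdot\xi_{\pi_P(t_\mv)}=\xi_{\pi_P(t_\mv)}$, so by multiplicativity $s_\alpha\cdot(\xi_x\,\xi_{\pi_P(t_\mv)}^{-1})=(s_\alpha\cdot\xi_x)\,\xi_{\pi_P(t_\mv)}^{-1}$ in $\HT(\OK)_P$, where $x=w\pi_P(t_\lv)$.

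The main step is then to compute $s_\alpha\cdot\xi_x=\xi_x-\alpha\,A_\alpha\cdot\xi_x$ and match it via Lemma \ref{beta}(5). In \emph{Case A}, where $s_\alpha w\in W^P$ and $\ell(s_\alpha w)<\ell(w)$, the lemma gives $s_\alpha x\in\Waffm\cap\Wpaff$ of greater length, so $A_\alpha\cdot\xi_x=\xi_{s_\alpha x}$ with the Peterson presentation $s_\alpha x=(s_\alpha w)\pi_P(t_\lv)$. Applying $\psi_P$ then yields $q_{\eta_P(\lv-\mv)}\bigl(\sigma^P(w)-\alpha\sigma^P(s_\alpha w)\bigr)$, which matches $s_\alpha^*(q_{\eta_P(\lv-\mv)}\sigma^P(w))$ by the second case of Corollary \ref{coro-action-sP}. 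In \emph{Case B}---the complement, which forces $s_\alpha w>w$ by the remark following Corollary \ref{coro-action-sP}---either $A_\alpha\cdot\xi_x=0$, or $A_\alpha\cdot\xi_x=\xi_{s_\alpha x}\in J_P$ (the latter via Lemma \ref{beta}(4) when $s_\alpha w\not\in W^P$); either way $\psi_P$ sends the correction term to zero, and both sides reduce to $q_{\eta_P(\lv-\mv)}\sigma^P(w)$ via the first case of Corollary \ref{coro-action-sP}.

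The hardest part of the plan is establishing multiplicativity of the homological action $u_*$ on $\HT(\OK)$, which is not visible from the purely algebraic definition $u_*=\delta_u\cdot$; fortunately this is handed to us by Proposition \ref{jadequivariant} through the $j$ map. Everything else is bookkeeping between Lemma \ref{beta}(4), Lemma \ref{beta}(5), and Corollary \ref{coro-action-sP}.
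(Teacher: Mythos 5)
Your proposal is correct and follows essentially the same route as the paper: reduce to simple reflections, compute $s_\alpha\cdot\xi_{w\pi_P(t_\lv)}$ via $\delta_{s_\alpha}=1-\alpha A_\alpha$, compute $s_\alpha^*\sigma^P(w)$ via Corollary~\ref{coro-action-sP}, and use Lemma~\ref{beta}(5) to show the case conditions coincide. The one thing you add that the paper leaves implicit is the treatment of the denominator $\xi_{\pi_P(t_\mv)}^{-1}$ in the localization $\HT(\OK)_P$, via multiplicativity of the $\delta_u$-action for the Pontryagin product (derived from Propositions~\ref{jad} and~\ref{jadequivariant}); the paper's proof computes only on the numerator $\xi_{w\pi_P(t_\lv)}$ without commenting on why this suffices, so your extra step is a genuine (and worthwhile) clarification rather than a departure.
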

\begin{proof}
We may assume that $u=s_i$, with $\alpha_i$ a simple root. Then $u_* \xi_{w \pi_P(t_\lv)} = \delta_u \cdot \xi_{w \pi_P(t_\lv)} = (1-\alpha_i A_i) \cdot \xi_{w \pi_P(t_\lv)}$. If $\ell(s_iw\pi_P(t_\lv)) > \ell(w \pi_P(t_\lv))$ and $s_iw \pi_P(t_\lv) \in \Wpaff \cap \Waffm$, then this is equal to $\xi_{w \pi_P(t_\lv)} - \alpha_i \xi_{s_iw\pi_P(t_\lv)}$. Otherwise, this is equal to $\xi_{w \pi_P(t_\lv)}$.

The action ${s_i}^* \sigma^P(w)$ is computed in Corollary \ref{coro-action-sP}.
If $\ell(s_iw) < \ell(w)$ and $s_iw \in W^P$, then this is equal to
$\sigma^P(w)-\alpha_i \sigma^P(s_iw)$. Otherwise, this is equal to $\sigma^P(w)$.

Let $\beta=w^{-1}(\alpha_i)$. The condition $s_iw \in W^P$ and $\ell(s_iw) < \ell(w)$ is equivalent to the condition $\ell(s_iw\pi_P(t_\lv)) > \ell(w \pi_P(t_\lv))$ and $s_iw \pi_P(t_\lv) \in \Wpaff \cap \Waffm$ by Lemma \ref{beta}.(5). This proves the result. 
\end{proof}

\subsection{The result}

We now prove our main result. For $i$ a cominuscule node, \emph{i.e.} such that $\varpi_i^\vee$
is a minuscule coweight, we let $v_i$ be the smallest element in $W$ such that
$v_i(\varpi_i^\vee) = w_0(\varpi_i^\vee)$ ($w_0$ is the longest element in $W$). The coweight $v_i(\varpi_i^\vee) = w_0(\varpi_i^\vee)$ is the opposite of a fundamental coweight: there exists $f(i) \in I$ such that $v_i(\varpi_i^\vee) = - \varpi_{f(i)}^\vee$. Actually we have $\alpha_{f(i)} = -w_0(\alpha_i)$ and $v_{f(i)} = v_i^{-1}$.

\begin{thm}
\label{theo}
Let $i$ be a cominuscule node. In $QH^*_T(G/P)$ we have
$$
\sigma^P(v_i) \times v_{i}^*(\sigma^P(w)) = q_{\eta_P(\varpi_i^\vee - w^{-1}(\varpi_i^\vee))}
\sigma^P(v_iw) \, .
$$
\end{thm}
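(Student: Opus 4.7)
The plan is to transport the identity to the equivariant homology $\HT(\OK)_P$ via Peterson's isomorphism $\psi_P$, apply the corrected Pontryagin product formula of Proposition \ref{productpip}, and pull back to $\QHT(G/P)$ using the equivariance Proposition \ref{psi-equivariant}. The crucial structural input is that $\tau_i = v_i t_{-\varpi_i^\vee}$ lies in $Z$ and has length zero, so that $v_i = \tau_i t_{\varpi_i^\vee}$ provides a factorization in which the ``$Z$-part'' is $\tau_i$ and the ``$\Waff$-part'' is a pure translation.

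First, I would fix $\nv \in P^\vee_-$ antidominant enough so that the relevant elements $\pi_P(t_\nv)$, $w \pi_P(t_\nv)$, $v_iw\pi_P(t_\nv)$ all lie in $\Wafftm \cap \Wpafft$ (this is the standard ``stability'' trick used to represent classes via $\psi_P$), and write $\sigma^P(w) = \psi_P(\xi_{w\pi_P(t_\nv)} \xi_{\pi_P(t_\nv)}^{-1})$, and analogously for $\sigma^P(v_i)$ and $\sigma^P(v_iw)$. The unique factorizations in $\Wafft$ then take the form $v_i\pi_P(t_\nv) = \tau_i \cdot \widehat{v_i\pi_P(t_\nv)}$ and $v_iw\pi_P(t_\nv) = \tau_i \cdot \widehat{v_iw\pi_P(t_\nv)}$, so both $\sigma$-parts equal $\tau_i$.

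Second, I would apply Proposition \ref{productpip} with $x = v_i\pi_P(t_\nv)$ and $\nv$ replaced by an auxiliary $\mv \in P^\vee_-$, to obtain in $\HT(\OK)_P$ a relation of the shape $(v_i^{-1})_* \xi_{\widehat{v_i\pi_P(t_\nv)}} \times \xi_{\widehat{\pi_P(t_\mv)}} = \psi_{\tau_i,e}^{-1}\, \xi_{\widehat{v_i\pi_P(t_\nv) \pi_P(t_\mv)}}$ modulo $J_P$, where the push-forward $(v_i^{-1})_*$ comes from the $\sigma$-factor $\tau_i$ via the Corollary \ref{productaffine} computation, and the correction factor is $\psi_{v_i^{-1}(\mv) - \mv}$. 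Translating through $\psi_P$ and using Proposition \ref{psi-equivariant}, the left hand side becomes $\sigma^P(v_i) \times v_i^*(\sigma^P(w))$ up to a normalization by $\xi_{\pi_P(t_\nv)}\xi_{\pi_P(t_\mv)}$, while the right hand side produces $\sigma^P(v_iw)$ scaled by the quantum parameter coming from the translation part of $\psi_{\tau_i,e}$ and from the $\eta_P$-projection of $v_i^{-1}(\mv) - \mv$; choosing $\mv$ so that this evaluates to $\varpi_i^\vee - w^{-1}(\varpi_i^\vee)$ modulo $\Qvp$ gives the claimed exponent.

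The hardest step will be the bookkeeping: identifying precisely which $\sigma,\tau \in Z$ appear on each side in Proposition \ref{productpip}, checking that the length conditions of that proposition hold for $\nv$ antidominant enough (this is where Lemma \ref{wafftm} and \cite[Lemma 3.3]{lam-shimozono} enter), and verifying the combinatorial identity $\eta_P(v_i^{-1}(\mv) - \mv + \varpi_i^\vee) = \eta_P(\varpi_i^\vee - w^{-1}(\varpi_i^\vee))$ modulo $\Qvp$ after absorbing the normalization classes. It is precisely the push-forward $(v_i^{-1})_*$, absent in the incorrect \cite[Theorem 1]{cmp4}, that translates through $\psi_P$ into the missing $v_i^*$ on the cohomology side and fixes the equivariant statement.
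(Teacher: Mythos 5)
Your high-level plan (transport to $\HT(\OK)_P$ via Peterson, apply Proposition~\ref{productpip}, pull back with Proposition~\ref{psi-equivariant}) is the same as the paper's, but the way you set up Proposition~\ref{productpip} fails and cannot recover the theorem. You take $x = v_i\pi_P(t_\nv)$ and the second factor $\pi_P(t_\mv)$, so after stripping $Z$-parts both factors are pure translations (more precisely, elements of $(W_P)_{\rm aff}$-cosets whose $W^P$-component is trivial). Under Peterson's map both translate into unit Schubert classes times quantum parameters, so the identity you would obtain is an equality of quantum parameters and cannot produce the $\sigma^P(w)$ and $\sigma^P(v_iw)$ in the theorem: the element $w$ simply never enters your computation. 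The claim that "the left hand side becomes $\sigma^P(v_i)\times v_i^*(\sigma^P(w))$ up to normalization" has no basis, because neither $\xi_{\widehat{v_i\pi_P(t_\nv)}}$ nor $\xi_{\widehat{\pi_P(t_\mv)}}$ carries the datum of $w$, and dividing by normalization classes $\xi_{\pi_P(t_{\nv'})}$ only changes quantum degrees.

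What the paper actually does is put $w$ in the $x$-slot and encode $v_i$ in the translation slot. Concretely, one takes $x=w\pi_P(t_{-\nv})$ with $\nu\in Q^\vee$ dominant enough, so that $x$ has \emph{trivial} $Z$-part $\sigma=e$; the second factor is $\pi_P(t_{-\varpi_i^\vee-\mu})$, whose $Z$-part is $\tau_i$ and whose $\Waff$-part is $\pi_P(v_{f(i)})\pi_P(t_{-(\varpi_i^\vee+\varpi_{f(i)}^\vee+\mu)})$ by the explicit factorization from \cite[\S3.5]{cmp4}. In Proposition~\ref{productpip}, with $\sigma = ut_\lv$ and $\tau = vt_\mv$, the push-forward $v^{-1}_*$ (coming from the $\tau$-factor) acts on the \emph{first} slot and $u^{-1}_*$ (from $\sigma$) on the second; since here $\sigma=e$ and $\tau=\tau_i=v_it_{-\varpi_i^\vee}$, one gets $(v_i^{-1})_*=(v_{f(i)})_*$ acting on $\xi_{w\pi_P(t_{-\nv})}$, which by Proposition~\ref{psi-equivariant} becomes $v_{f(i)}^*\sigma^P(w)$, while the second factor yields $\sigma^P(v_{f(i)})$ times a quantum parameter. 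Also, $\psi_{\sigma,\tau}=\psi_{u^{-1}(\mv')-\mv'}$ where $\mv'$ is the translation part of $\tau$; with $\sigma=e$ this is $\psi_0=1$ (and the same would be true in your setup, where $\tau=e$), so there is no non-trivial correction factor---your expression $\psi_{v_i^{-1}(\mv)-\mv}$ conflates your auxiliary coweight with the translation part of the $Z$-element. The paper's computation yields the theorem for the node $f(i)$, and since $i\mapsto f(i)$ is a bijection on cominuscule nodes (with $v_{f(i)}=v_i^{-1}$), this proves it for all of them.
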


\begin{proof}
  Let $w \in W^P$, we have $\pi_P(w)=w$. Let $\varpi_i^\vee$ be the minuscule coweight associated to $i$ and let $\mv$ and $\nv$ be in $Q^\vee$ and dominant enough. As in \cite[\S 3.5]{cmp4}, we get
  $$\begin{array}{l}
    \pi_P(t_{-\varpi^\vee_i-\mu}) = \tau_i \pi_P(v_{f(i)}) \pi_P(t_{-(\varpi_i^\vee + \varpi_{f(i)}^\vee + \mu)} \textrm{ and } \\
    \pi_P(wt_{-\nu} t_{-\varpi^\vee_i-\mu}) = \tau_i \pi_P(v_{f(i)}w) \pi_P(t_{-(\varpi_i^\vee + w^{-1}(\varpi_{f(i)}^\vee) + \mu + \nu)}.\\
  \end{array}$$
For $\mu$ and $\nu$ dominant enough, the elements $wt_{-\nu}$, $t_{-\varpi_i^\vee - \mu}$ and $wt_{-\nu}t_{-\varpi_i^\vee - \mu}$ are in $\Wafftm$ and their image by $\pi_P$ are in $\Wpafft \cap \Wafftm$. We may therefore apply Proposition \ref{productpip} to the elements $wt_{-\nu}$ and $t_{-\varpi_i^\vee-\mu}$ to get:
$$
\begin{array}{cl}
& (v_{f(i)})_*\xi_{w\pi_P(t_{-\nv})} \times
\xi_{\pi_P(v_{f(i)})\pi_P(t_{-(\varpi_i^\vee+\varpi_{f(i)}^\vee+\mv)})} \\
\equiv &
\xi_{\pi_P(v_{f(i)}w)\pi_P(t_{-(\varpi_i^\vee+w^{-1}(\varpi_{f(i)}^\vee+\mv+\nv))})}\, ,
\end{array}
$$
where $\equiv$ means equality in $\HT(\OK)_P$ or equivalently equality modulo $J_P$. Applying Peterson's map (\ref{peterson}), we get thanks to Proposition \ref{psi-equivariant} the corresponding formula in the quantum cohomology ring:
$$
\begin{array}{cl}
& v_{f(i)}^*\sigma^P(w)q_{-\eta_P(\nv)}*\times \sigma^P(v_{f(i)})
q_{-\eta_P((\varpi_i^\vee+\varpi_{f(i)}^\vee+\mv))} \\
= & \sigma^P(v_{f(i)}w)q_{-\eta_P((\varpi_i^\vee+w^{-1}(\varpi_{f(i)}^\vee)+\mv+\nv))}\, ,
\end{array}
$$
hence finally:
$$v_{f(i)}^*\sigma^P(w)*\sigma^P(v_{f(i)}) =
q_{\eta_P(\varpi_{f(i)}^\vee-w^{-1}(\varpi_{f(i)}^\vee))}
\sigma^P(v_{f(i)}w).$$ This concludes the proof of the theorem.
\end{proof}

\begin{cor}
Let $i$ be such that $\varpi_i^\vee$ is a minuscule coweight. In $QH^*(G/P)$, we have
$$
\sigma^P(v_i) \times \sigma^P(w) = q_{\eta_P(\varpi_i^\vee - w^{-1}(\varpi_i^\vee))}
\sigma^P(v_iw) \, .
$$
\end{cor}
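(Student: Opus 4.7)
The plan is to deduce this corollary from Theorem \ref{theo} by specialization to the non-equivariant setting. Concretely, I would start from the equivariant identity
$$\sigma^P(v_i) \times v_i^*(\sigma^P(w)) = q_{\eta_P(\varpi_i^\vee - w^{-1}(\varpi_i^\vee))}\,\sigma^P(v_iw)$$
in $QH^*_T(G/P)$, and then pass to non-equivariant quantum cohomology via the natural map $QH_T^*(G/P) \to QH^*(G/P)$ that sets the equivariant parameters (i.e.\ the generators of $S = H^*_T(\mathrm{pt})$) to zero.

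The only thing to check is that $v_i^*$ becomes the identity under this specialization. This is exactly the content of Fact \ref{waction}.(3): the action $w^*$ on $H^*(G/B)$, and hence on $H^*(G/P)$ via the inclusion of Fact \ref{waction}.(2), is trivial in non-equivariant cohomology. Since the Weyl-group action is extended to $QH^*_T(G/P)$ by $S$-linearity (and $\mathbb{Z}[\Qvp]$-linearity in the quantum parameters), its specialization to $QH^*(G/P)$ is the identity on Schubert classes and on the quantum parameters $q_\nu$. Alternatively, one can read this off directly from Corollary \ref{coro-action-sP}, since the correction term $\alpha\,\sigma^P(s_\alpha w)$ vanishes when $\alpha$ is set to zero, so $(s_\alpha)^*\sigma^P(w) = \sigma^P(w)$ for every simple $\alpha$, hence $v_i^*\sigma^P(w) = \sigma^P(w)$ non-equivariantly.

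Substituting $v_i^*\sigma^P(w) = \sigma^P(w)$ into the theorem and observing that the right-hand side is already a quantum-multiple of a Schubert class whose non-equivariant reduction is well-defined, gives the stated formula. The main (and only) potential subtlety is making sure the specialization map is compatible with quantum multiplication and with the action $v_i^*$; both are standard and were precisely the reason Fact \ref{waction} was introduced. There is no computational obstacle here: the corollary is essentially a clean restatement of Theorem \ref{theo} after the symbol $v_i^*$ is erased.
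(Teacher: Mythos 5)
Your proof is correct and is exactly the intended argument: the paper states the corollary immediately after Theorem \ref{theo} without further proof, the point being precisely that $v_i^*$ is trivial non-equivariantly by Fact \ref{waction}.(3) (or equivalently by setting $\alpha=0$ in Corollary \ref{coro-action-sP}), so specializing the equivariant parameters to zero erases the $v_i^*$ and yields the stated formula.
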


\begin{example}
\label{examplea1}
Let $G$ be of type $A_1$, so that $G/B=\P^1$. Let
$s$ be the non trivial element of $W$ and $\alpha$ the simple root.
We have $$\sigma^B(s)*(\sigma^B(s)-\alpha)=q\, .$$
\end{example}
\begin{proof}
Let $i$ be the unique node of the Dynkin diagram of $G$. Then $v_i=s$. To apply
Theorem \ref{theo}, we also set $w=s$. Let $x$ resp. $y$ be the $B$-stable resp. $B^-$-stable
point in $\P^1$. The class $\sigma^B(s)$ is the $T$-equivariant class of $x$, and
$v_i^* \sigma^B(s)$ is the $T$-equivariant class of $y$. Since $[x]-[y]=\alpha$, we have
$v_i^* \sigma^B(s)=\sigma^B(s)-\alpha$. Denoting $h=\sigma^B(s)$, the theorem yields
$h \times (h-\alpha)=q$, as claimed. Note that $h^2=q+\alpha h$ is also predicted eg by
\cite[Theorem 1]{mihalcea}.
\end{proof}

\section{Pieri formulas}
\label{pieri}

We now give another application of Proposition \ref{jad} to prove a formula for
$j(\xi_{\tau_i(v_i)})$, see Proposition \ref{prop_pieri}. This gives the multiplication in
$H^T_*(\OK)$ by the class $\xi_{\tau_i(v_i)}$. We hope in subsequent work to deduce Pieri formulas for the non-equivariant
multiplication by classes generating $H_*(\OK)$ in all classical types.

\medskip

We first provide a generalization of \cite[Proposition 5.4]{lam} to coweights. For $\mv \in P^\vee$, set $W_\mv = \scal{s_{\alpha_i} \ | \ i \in [1,r] \textrm{ and } \scal{\alpha_i,\mv} = 0} = \{ w \in W \ | \ w(\mv) = \mv \}$.
\begin{prop}
\label{jadtranslation}
Let $\mv \in \Pv$ be antidominant. Then
$$\jad(\xi_{t_\mv})=\sum_{w \in W/W_\mv} \At_{t_{w(\mv)}}\, .$$
\end{prop}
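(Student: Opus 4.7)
I will apply the characterization of $\jad$ from Proposition \ref{jad}: for $y \in \Wafftm$, $\jad(\xi_y)$ is the unique element of $Z_{\Aafft}(S)$ congruent to $\At_y$ modulo the left ideal $\sum_{x \in W \setminus \{e\}} \Aafft \cdot A_x$. Setting $R_\mv := \sum_{w \in W/W_\mv} \At_{t_{w(\mv)}}$, it suffices to verify these two conditions for $y = t_\mv$. Note that $\mv$ antidominant implies $t_\mv \in \Wafftm$, so the characterization applies.

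The congruence condition is immediate. The summand indexed by $w \in W_\mv$ contributes $\At_{t_\mv}$. For $w \notin W_\mv$, the coweight $w(\mv)$ is not antidominant (since $\mv$ is the unique antidominant element of its $W$-orbit), hence $\scal{\alpha_i, w(\mv)} > 0$ for some simple root $\alpha_i$. The length formula underlying Lemma \ref{wafftm} then gives $t_{w(\mv)} s_i \lessdot t_{w(\mv)}$, so $t_{w(\mv)}$ admits a reduced expression ending with $s_i$. Consequently $\At_{t_{w(\mv)}} = \At_{t_{w(\mv)} s_i} \cdot A_i$ lies in $\Aafft \cdot A_i$, which is contained in the required left ideal.

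The hard part will be the centralizer condition. For $\lambda \in P$, Proposition \ref{product4} (together with the fact that translations act trivially on $P$) gives
$$R_\mv \lambda - \lambda R_\mv = \sum_{\nv \in W \cdot \mv} \sum_{\beta \,:\, t_\nv s_\beta \lessdot t_\nv} \scal{\lambda, \bv}\, \At_{t_\nv s_\beta}.$$
Writing a positive real root as $\beta = \gamma + k\epsilon$ and indexing by $\gamma \in R^+$, I will classify the inversions of $t_\nv$ into two families: (i) $\beta = \gamma + k\epsilon$ with $0 \le k < \scal{\gamma, \nv}$ (here $\bv = \gv$), and (ii) $\beta = -\gamma + k\epsilon$ with $1 \le k \le -\scal{\gamma, \nv}$ (here $\bv = -\gv$). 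Using Lemma \ref{talpha} to write $s_{\gamma + k\epsilon} = t_{-k\gv} s_\gamma$, a short computation yields $t_\nv s_\beta = s_\gamma\, t_{\nv - (\scal{\gamma, \nv} - k)\gv}$ in family (i), and the same element arises at the triple $(s_\gamma(\nv), -\gamma, \scal{\gamma, \nv} - k)$ of family (ii). The involution $(\nv, \gamma, k) \leftrightarrow (s_\gamma(\nv), \gamma, \scal{\gamma, \nv} - k)$ on the disjoint union of inversion sets therefore pairs matching $\At$-terms whose coefficients $\scal{\lambda, \bv}$ have opposite signs, so all contributions cancel and $R_\mv \in Z_{\Aafft}(S)$.

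With both conditions verified, Proposition \ref{jad} yields $\jad(\xi_{t_\mv}) = R_\mv$, completing the proof.
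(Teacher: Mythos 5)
Your proof is correct and follows essentially the same strategy as the paper's: verify the two characterizing conditions of Proposition \ref{jad}, with the centralizer condition established by an involution on the inversion data $(\nv,\beta)$ that pairs terms with equal $\At$-element and opposite coefficient $\scal{\lambda,\bv}$. Your involution $(\nv,\gamma,k)\leftrightarrow(s_\gamma(\nv),\gamma,\scal{\gamma,\nv}-k)$ is, after unwinding the parameterization $\beta=\pm\gamma+k\epsilon$, exactly the paper's $\iota(\nv,\beta)=(s_\beta(\nv),-t_\nv(\beta))$; you merely write it in coordinates, which also makes the absence of fixed points manifest.
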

\begin{proof}
We follow the idea of proof given in \cite[Proposition 5.4]{lam}. Using
Lemma \ref{wafftm}, we see that for $w \in W/W_{\mv}$ non trivial, $t_{w(\mv)} \not \in \Wafftm$, so that
$A_{t_{w(\mv)}}$ belongs to the ideal $\sum_{x \in W \setminus \{e\}} \Aafft \cdot A_x$ of Proposition \ref{jad}.
Thus, using Proposition \ref{jad}, we only need to prove that $\sum_{w \in W/W_\mv} A_{t_{w(\mv)}} \in Z_{\Aafft}(S)$.

To prove that $c:=\sum_{w \in W/W_\mv} \At_{t_{w(\mv)}}$ centralizes $S$, or equivalently commutes with any $\lambda$
in $P$, we use
Proposition \ref{product4} to compute $\At_{t_\nv} \lambda$.
In this formula, the term $t_\nv(\lambda)$ is equal to $\lambda$ by (\ref{defaction})
in \S \ref{subsec:affineweyl}.
Let $\cP$ be the set of pairs
$(\nv,\beta)$ where $\nv \in W \cdot \mv$, $\beta$ is a positive real root, and $t_\nv s_\beta \lessdot t_\nv$.
We have:
\begin{equation}
\label{clambda}
c \lambda - \lambda c = \sum_{(\nv,\beta) \in \cP} \scal{\lambda,\beta^\vee} \At_{t_{\nv} s_\beta} \, ,
\end{equation}
so our concern now is to prove that this sum vanishes.

We consider the map $\iota:\cP \to \cP$ defined by $\iota(\nv,\beta)=(s_\beta(\nv),-t_\nv(\beta))$. Let
$(\nv,\beta) \in \cP$. We have
$$
\begin{array}{rclcl}
t_\nv s_\beta = t_\nv s_\beta t_{-\nv} t_\nv & = & s_{t_\nv(\beta)} t_\nv \\
& = & t_{s_{t_\nv(\beta)}(\nv)} s_{t_\nv(\beta)} & = & t_{s_\beta(\nv)} s_{-t_\nv(\beta)}\, ,
\end{array}
$$
where the last equality follows from (\ref{defaction}) and Lemma \ref{salphaepsilon}.
By the length formula in \cite[Corollary 3.13]{cmp4}, $\ell(t_{s_\beta(\nv)})=\ell(t_\nv)$ and by definition of $\cP$,
$\ell(t_\nv s_\beta) = \ell(t_\bv)-1$. Thus, $\ell(t_{s_\beta(\nv)} s_{-t_\nv(\beta)}) = \ell(t_{s_\beta(\nv)})-1$.
Moreover, by \cite[Proposition 4.4.6]{bb}, $t_\nv(\beta)<0$, which implies
$t_{s_\beta(\nv)} s_{-t_\nv(\beta)} \lessdot t_{s_\beta(\nv)}$ and $-t_\nv(\beta)>0$, so
$(s_\beta(\nv),-t_\nv(\beta)) \in \cP$ as claimed.

We also observe that $\scal{\lambda,-t_\nv(\beta)^\vee}=\scal{\lambda,-\beta^\vee}=-\scal{\lambda,\beta^\vee}$. Finally,
$$
-t_{s_\beta(\nv)}(-t_\nv(\beta)) = t_{s_\beta(\nv)} t_\nv(\beta) = s_\beta t_\nv s_\beta t_\nv(\beta)\, .
$$
One can check that this root is equal to $\beta$, so that $\iota$ is an involution and the terms in (\ref{clambda}) cancel
pairwise.
\end{proof}

We now prove some preliminary lemmas.

\begin{lemma}
\label{lemm_wi}
Let $i \in \Iaff$.
We have $\displaystyle \jad(\xit_{w_0(\varpi_i^\vee)}) = \delta_{\tau_i^{-1}} \sum_{w \leq_L v_i} A_{\tau_i(w)v_iw^{-1}}\, .$ 
\end{lemma}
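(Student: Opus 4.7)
The plan is to derive the formula by applying Proposition \ref{jadtranslation} to the antidominant coweight $\mv = w_0(\varpi_i^\vee) = -\varpi_{f(i)}^\vee$ and then rewriting each of the resulting $\At$-terms in the factored form $\delta_{\tau_i^{-1}} A_{(\cdot)}$.

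First I apply Proposition \ref{jadtranslation} to $\mv$. Its stabilizer $W_\mv$ is the maximal standard parabolic subgroup generated by $\{s_j : j \neq f(i)\}$, and since $W \cdot w_0(\varpi_i^\vee) = W \cdot \varpi_i^\vee$, the formula reads
\[
\jad(\xit_{t_\mv}) = \sum_{w \in W/W_\mv} \At_{t_{w(\mv)}} = \sum_{\nu \in W \varpi_i^\vee} \At_{t_\nu},
\]
with each orbit element appearing once.

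The main obstacle is now to reindex this sum by $\{u \in W : u \leq_L v_i\}$ via the map $u \mapsto u(\varpi_i^\vee)$. This rests on a classical property of minuscule nodes: $v_i$ is the longest of the minimum length coset representatives for $W/W_{\varpi_i^\vee}$, and is fully commutative; consequently the weak left interval $\{u : u \leq_L v_i\}$, the Bruhat interval $\{u : u \leq v_i\}$, and this set of coset representatives all coincide. Hence $u \mapsto u(\varpi_i^\vee)$ restricts to a bijection from $\{u \leq_L v_i\}$ onto $W \varpi_i^\vee$, and the sum becomes
\[
\jad(\xit_{t_\mv}) = \sum_{u \leq_L v_i} \At_{t_{u(\varpi_i^\vee)}}.
\]

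It remains to rewrite each term in the desired factored form. Using $\tau_i = v_i t_{-\varpi_i^\vee}$, hence $\tau_i^{-1} v_i = t_{\varpi_i^\vee}$, together with the identity $u t_\lambda u^{-1} = t_{u(\lambda)}$, a direct computation in $\Wafft$ yields
\[
\tau_i(u)\, v_i\, u^{-1} = \tau_i \cdot u\, \tau_i^{-1} v_i\, u^{-1} = \tau_i \cdot u\, t_{\varpi_i^\vee}\, u^{-1} = \tau_i \cdot t_{u(\varpi_i^\vee)} = v_i\, t_{u(\varpi_i^\vee) - \varpi_i^\vee} \in \Waff,
\]
whence $\tau_i^{-1} \cdot \tau_i(u) v_i u^{-1} = t_{u(\varpi_i^\vee)}$. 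Since $\tau_i^{-1} \in Z$ and $\tau_i(u) v_i u^{-1} \in \Waff$ realize the unique decomposition $x = \tau \xh$ of $t_{u(\varpi_i^\vee)}$ from Definition \ref{defi:Ax}, we obtain $\At_{t_{u(\varpi_i^\vee)}} = \delta_{\tau_i^{-1}} A_{\tau_i(u) v_i u^{-1}}$. Summing over $u \leq_L v_i$ and factoring $\delta_{\tau_i^{-1}}$ out of the sum produces the claimed identity.
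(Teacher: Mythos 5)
Your argument matches the paper's proof step for step: apply \hyperref[jadtranslation]{Proposition~\ref*{jadtranslation}} to the antidominant coweight $w_0(\varpi_i^\vee)$, reindex the resulting orbit sum by $\{w : w \leq_L v_i\}$, and use $t_{\varpi_i^\vee} = \tau_i^{-1} v_i$ to exhibit $\tau_i^{-1} \cdot \bigl(\tau_i(w) v_i w^{-1}\bigr)$ as the unique $Z\times\Waff$ decomposition of $t_{w(\varpi_i^\vee)}$, whence $\At_{t_{w(\varpi_i^\vee)}} = \delta_{\tau_i^{-1}} A_{\tau_i(w) v_i w^{-1}}$ and the prefactor $\delta_{\tau_i^{-1}}$ can be pulled out. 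One caveat: your intermediate assertion that the Bruhat interval $\{u : u \leq v_i\}$ coincides with the set of minimal-length coset representatives is false --- already in type $A_2$ with $i=1$, $v_i = s_2 s_1$, the Bruhat interval contains $s_2$, which is not a minimal representative. This assertion is not used, however; what your proof actually needs, that the weak-left interval $\{u : u \leq_L v_i\}$ coincides with $W^{P_i}$ and hence is in bijection with the orbit $W\varpi_i^\vee$, is correct, and it is the same implicit reindexing fact the paper relies on without comment.
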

\begin{proof}
Since $w_0(\varpi_i^\vee) \leq 0$ we may apply Proposition \ref{jadtranslation} and get
$$
\jad(\xi_{w_0(\varpi_i^\vee)}) = \sum_{\mv \in W \cdot w_0(\varpi_i^\vee)} A_{t_\mv} \, .
$$
Thus,
$$
\begin{array}{rcccl}
\jad(\xi_{w_0(\varpi_i^\vee)}) & = & \sum_{\mv \in W \cdot \varpi_i^\vee} A_{t_\mv} & = & \sum_{w \leq_L v_i} A_{wt_{\varpi_i^\vee}w^{-1}} \\
& = & \sum_{w \leq_L v_i} A_{w\tau_i^{-1}v_iw^{-1}} & = & \sum_{w \leq_L v_i} A_{\tau_i^{-1}\tau_i(w)v_iw^{-1}} \\
& = & \delta_{\tau_i^{-1}} \sum_{w \leq_L v_i} A_{\tau_i(w)v_iw^{-1}}\, .
\end{array}
$$
\end{proof}

\begin{lemma}
\label{lemm_commutation}
Let $s \in S$ and $i \in \Iaff$. We have
$$
\left ( \sum_{w \leq_L v_i} A_{\tau_i(w)v_iw^{-1}} \right ) s = \tau_i(s) \left ( \sum_{w \leq_L v_i} A_{\tau_i(w)v_iw^{-1}} \right ) \, .
$$
\end{lemma}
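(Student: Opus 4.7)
The plan is to deduce the commutation relation as a direct consequence of the previous lemma together with Proposition \ref{jad}, which states that $\jad$ takes values in the centralizer $Z_{\Aafft}(S)$.

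First, I would invoke Lemma \ref{lemm_wi} to write
$$\jad(\xit_{w_0(\varpi_i^\vee)}) = \delta_{\tau_i^{-1}} \left ( \sum_{w \leq_L v_i} A_{\tau_i(w)v_iw^{-1}} \right ).$$
Since by Proposition \ref{jad} this element centralises $S$, we have for any $s \in S$:
$$\delta_{\tau_i^{-1}} \left ( \sum_{w \leq_L v_i} A_{\tau_i(w)v_iw^{-1}} \right ) s = s \delta_{\tau_i^{-1}} \left ( \sum_{w \leq_L v_i} A_{\tau_i(w)v_iw^{-1}} \right ).$$

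Next, I would use the commutation relation in $\Qafft$ from Definition \ref{defi:qaff}, namely $\delta_u s' = u(s') \delta_u$ for all $u \in \Wafft$ and $s' \in S$. Applied with $u = \tau_i^{-1}$ and $s' = \tau_i(s)$, this gives $s \delta_{\tau_i^{-1}} = \delta_{\tau_i^{-1}} \tau_i(s)$. Substituting on the right-hand side of the centraliser equation yields
$$\delta_{\tau_i^{-1}} \left ( \sum_{w \leq_L v_i} A_{\tau_i(w)v_iw^{-1}} \right ) s = \delta_{\tau_i^{-1}} \tau_i(s) \left ( \sum_{w \leq_L v_i} A_{\tau_i(w)v_iw^{-1}} \right ).$$

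Finally, since $\delta_{\tau_i^{-1}}$ is invertible in $\Qafft$ (its inverse being $\delta_{\tau_i}$), cancellation on the left gives the desired identity. There is no real obstacle in this proof: once Lemma \ref{lemm_wi} and the general property of $\jad$ have been established, the commutation relation is essentially just the image under $\jad$ of the trivial fact that $\xit_{w_0(\varpi_i^\vee)}$ is central, combined with the twisting by $\delta_{\tau_i^{-1}}$ that appeared in Lemma \ref{lemm_wi}.
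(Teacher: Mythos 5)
Your proposal is correct and matches the paper's argument exactly: both deduce from Lemma~\ref{lemm_wi} and Proposition~\ref{jad} that $\delta_{\tau_i^{-1}} \sum_{w \leq_L v_i} A_{\tau_i(w)v_iw^{-1}}$ centralizes $S$, then move $s$ past $\delta_{\tau_i^{-1}}$ via the twisted commutation $s\,\delta_{\tau_i^{-1}} = \delta_{\tau_i^{-1}}\,\tau_i(s)$ and cancel the invertible $\delta_{\tau_i^{-1}}$.
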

\begin{proof}
Let $i \in \Iaff$. Since $\jad(\xi_{w_0(\varpi_i^\vee)}) = \delta_{\tau_i^{-1}} \sum_{w \leq_L v_i} A_{\tau_i(w)v_iw^{-1}}$, we deduce that
$$\delta_{\tau_i^{-1}} \sum_{w \leq_L v_i} A_{\tau_i(w)v_iw^{-1}} \in Z_\Aaff(S) \, .$$
Let $s \in S$, we have:
$$
\delta_{\tau_i^{-1}} (\sum_{w \leq_L v_i} A_{\tau_i(w)v_iw^{-1}}) \, s = s \delta_{\tau_i^{-1}} \sum_{w \leq_L v_i} A_{\tau_i(w)v_iw^{-1}}
= \delta_{\tau_i^{-1}} \tau_i(s) \sum_{w \leq_L v_i} A_{\tau_i(w)v_iw^{-1}}\, ,
$$
which proves the lemma.
\end{proof}

\begin{prop}
\label{prop_pieri}
Let $i \in \Iaff$, let as above $v_i$ the maximal element in $W^{P_i}$ and $\tau_i$ the automorphism of the affine Dynkin diagram defined by $i$.
Then $\tau_i(v_i) \in \Waffm$ and we have:
$$
j(\xi_{\tau_i(v_i)}) = \sum_{w \leq_L v_i} \sum_{v \leq v_i^{-1}} \tau_i(\xi^v(v_i^{-1})) A_{\tau_i(w)v_iw^{-1}} A_v \, .
$$
\end{prop}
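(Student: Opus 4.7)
The plan has three stages. First, I would show $\tau_i(v_i) \in \Waffm$. The translation $t_{w_0(\varpi_i^\vee)}$ lies in $\Wafftm$ by Lemma \ref{wafftm}, since $w_0(\varpi_i^\vee) = -\varpi_{f(i)}^\vee$ is antidominant and the vanishing condition is vacuous with $u = 1$. A short computation with $\tau_i = v_i t_{-\varpi_i^\vee}$ and $\tau_i^{-1} = v_i^{-1} t_{-\varpi_{f(i)}^\vee}$ gives
$$t_{w_0(\varpi_i^\vee)} = v_i\tau_i^{-1} = \tau_i^{-1}\cdot(\tau_i v_i \tau_i^{-1}) = \tau_i^{-1}\cdot\tau_i(v_i),$$
which is precisely the canonical decomposition $\tau\widehat{x}$ of Definition \ref{defi:xhat} with $\widehat{x} = \tau_i(v_i) \in \Waff$. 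Since $t_{w_0(\varpi_i^\vee)} \in \Wafftm$, this forces $\tau_i(v_i) \in \Waffm$.

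Next, I would translate the $\jad$-formula of Lemma \ref{lemm_wi} into a statement about $j$. Under the isomorphism $\Mt_3 \simeq \Mt_2$, the class $\xit_{t_{w_0(\varpi_i^\vee)}}$ coincides with $\tau_i^{-1} \cdot \xi_{\tau_i(v_i)}$. Writing $\tau_i^{-1} = v_i^{-1} t_{-\varpi_{f(i)}^\vee}$ and applying the equivariance formula of the Example preceding Corollary \ref{productaffine} (with $u = v_i^{-1}$) yields
$$\jad(\tau_i^{-1}\cdot \xi_{\tau_i(v_i)}) = \delta_{\tau_i^{-1}}\, j(\xi_{\tau_i(v_i)})\, \delta_{v_i}.$$
Equating this with the right-hand side of Lemma \ref{lemm_wi}, left-cancelling the invertible $\delta_{\tau_i^{-1}}$ and then right-multiplying by $\delta_{v_i^{-1}}$ gives
$$j(\xi_{\tau_i(v_i)}) = \Bigl(\sum_{w \leq_L v_i} A_{\tau_i(w)v_iw^{-1}}\Bigr)\,\delta_{v_i^{-1}}.$$

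Finally, I would expand $\delta_{v_i^{-1}}$ via Fact \ref{fact_base_change} as $\delta_{v_i^{-1}} = \sum_{v \leq v_i^{-1}} \xi^v(v_i^{-1}) A_v$. Substituting and commuting each scalar $\xi^v(v_i^{-1}) \in S$ to the left of the whole sum $\sum_w A_{\tau_i(w)v_iw^{-1}}$ by means of Lemma \ref{lemm_commutation} produces the twist $\tau_i$ on the scalar, delivering exactly the announced double sum. The mildly delicate point of the argument is that Lemma \ref{lemm_commutation} applies only to the full sum over $w \leq_L v_i$ and not term by term, so one must group both summations before performing the commutation; aside from this, everything is straightforward bookkeeping inside $\Qafft$.
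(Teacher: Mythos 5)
Your proof is correct, and stages two and three reproduce the paper's argument exactly: identify $t_{w_0(\varpi_i^\vee)} = \tau_i^{-1}\tau_i(v_i)$, use the equivariance of $\jad$ to get $\jad(\xi_{t_{w_0(\varpi_i^\vee)}}) = \delta_{\tau_i^{-1}} j(\xi_{\tau_i(v_i)}) \delta_{v_i}$, invoke Lemma~\ref{lemm_wi}, expand $\delta_{v_i^{-1}}$ via Fact~\ref{fact_base_change}, and commute the scalar coefficients through the $w$-sum using Lemma~\ref{lemm_commutation} — and your remark that the commutation holds only for the \emph{entire} $w$-sum, not termwise, is exactly the subtle point one must respect. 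The one place you diverge is in showing $\tau_i(v_i) \in \Waffm$. The paper argues directly: $v_i \in W^{P_i}$ gives $\ell(v_i s_j) > \ell(v_i)$ for all finite $j \neq i$, one also has $\ell(v_i s_0) > \ell(v_i)$ since $v_i \in W$, and conjugating by $\tau_i$ (which sends $\alpha_i \mapsto \alpha_0$) transports these ascents to all finite $s_k$. You instead observe $t_{w_0(\varpi_i^\vee)} \in \Wafftm$ via Lemma~\ref{wafftm} and deduce $\tau_i(v_i) \in \Waffm$ from the canonical decomposition $t_{w_0(\varpi_i^\vee)} = \tau_i^{-1}\tau_i(v_i)$ together with the equivalence $\tau\widehat{x} \in \Wafftm \Leftrightarrow \widehat{x} \in \Waffm$ (used elsewhere in the paper, in the proof of Proposition~\ref{productpip}). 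Both routes are valid; yours is slightly more economical because it recycles the factorization that is needed anyway for the $\jad$-computation, while the paper's argument is self-contained and does not appeal to Lemma~\ref{wafftm}.
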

\begin{proof}
We first prove that $\tau_i(v_i) \in \Waffm$.
We know that $\tau_i(\alpha_i)=\alpha_0$.
Since $v_i \in W^{P_i}$, we have for $1 \leq j \leq n$ with $j \neq i$, $\ell(v_is_j)>\ell(v_i)$. Since $v_i \in W$, $\ell(v_is_0)>\ell(v_i)$. Applying
$\tau_i$, we deduce that for all $k>0$, $\ell(\tau_i(v_i)s_k)>\ell(\tau_i(v_i))$. Thus, $\tau_i(v_i) \in \Waffm$.

Moreover, we know that $\tau_i=v_it_{-\varpi_i^\vee}$. Therefore, $v_i=\tau_it_{\varpi_i^\vee}=t_{w_0(\varpi_i^\vee)} \tau_i$, so that
$t_{w_0(\varpi_i^\vee)} = v_i \tau_i^{-1} = \tau_i^{-1} \tau_i(v_i)$. By Proposition \ref{jadequivariant}, we deduce that
$\jad(\xi_{t_{w_0(\varpi_i^\vee)}}) = \delta_{\tau_i^{-1}} j(\xi_{\tau_i(v_i)}) \delta_{v_i}$.

By Lemma \ref{lemm_wi}, we deduce that
$$ 
\delta_{\tau_i^{-1}} j(\xi_{\tau_i(v_i)}) \delta_{v_i} = \delta_{\tau_i^{-1}} \sum_{w \leq_L v_i} A_{\tau_i(w)v_iw^{-1}} \, .
$$
Therefore, using Fact \ref{fact_base_change} and then Lemma \ref{lemm_commutation}, we find
$$
\begin{array}{rcl}
j(\xi_{\tau_i(v_i)}) & = & \sum_{w \leq_L v_i} A_{\tau_i(w)v_iw^{-1}} \delta_{v_i^{-1}} \\
& = & \sum_{w \leq_L v_i} A_{\tau_i(w)v_iw^{-1}} \sum_{v \leq v_i^{-1}} \xi^v(v_i^{-1}) A_v \\
& = & \sum_{w \leq_L v_i} \sum_{v \leq v_i^{-1}} \tau_i(\xi^v(v_i^{-1})) A_{\tau_i(w)v_iw^{-1}} A_v\, .
\end{array}
$$
\end{proof}

\begin{remark}
Let $x \in \Waffm$. In the non equivariant homology, we thus have 
$$
\xi_{\tau_i(v_i)} \cdot \xi_x = \sum \xi_{\tau_i(w)v_iw^{-1}x}\, ,
$$
where the sum is over $w \leq_L v_i$ such that $\ell(\tau_i(w)v_iw^{-1}x)=\ell(v_i)+\ell(x)$ and $\tau_i(w)v_iw^{-1}x \in \Waffm$.

By Corollary \ref{magyar}, we know that there is only one Schubert class in the product
$\xi_{\tau_i(v_i)} \cdot \xi_x$, from which we deduce that there is exactly one
$w \leq_L v_i$ such that $\ell(\tau_i(w)v_iw^{-1}x)=\ell(v_i)+\ell(x)$ and $\tau_i(w)v_iw^{-1}x \in \Waffm$.
\end{remark}

\begin{example}
\label{exam_pieri}
Let us assume we are in type $\widetilde A_3$ and let us write for short
$A_{210}$ instead of $A_{s_2s_1s_0}$ and similarly for $\xi_{210}$ and $\delta_{210}$.
Let $i=1$ so that $v_i=s_3s_2s_1$ and $\tau_i(v_i)=s_2s_1s_0$.
First we observe that 
$$
\begin{array}{rcl}
\delta_{\tau_i^{-1}} = \delta_{123} & = & (1-\alpha_1 A_1)(1-\alpha_2 A_2)(1-\alpha_3 A_3) \\
& = & 1 - \a_1 A_1 - (\a_1+\a_2)A_2-(\a_1+\a_2+\a_3)A_3 \\
& + & \a_1(\a_1+\a_2) A_{12} +  \a_1(\a_1+\a_2+\a_3) A_{13} + (\a_1+\a_2)(\a_1+\a_2+\a_3)A_{23} \\
& - & \a_1(\a_1+\a_2)(\a_1+\a_2+\a_3)A_{123}\, .
\end{array}
$$
Since $\tau_i(\a_1)=-\theta=-(\a_1+\a_2+\a_3)$, $\tau_i(\a_2)=\a_1$ and $\tau_i(\a_3)=\a_2$, we get:
$$
\begin{array}{rcl}
j(\xi_{210}) & = & A_{210} + A_{321} + A_{032} + A_{103} \\
& + & \a_3(A_{2103} + A_{3213} + A_{0323}) \\
& + & (\a_2+\a_3) (A_{2102} + A_{3212} A_{1032}) \\
& + & (\a_1+\a_2+\a_3) (A_{2101} + A_{0321} + A_{1031}) \\
& + & \a_3(\a_2+\a_3) (A_{21023} + A_{32123} + A_{10323}) \\
& + & \a_3(\a_1+\a_2+\a_3) (A_{21031} + A_{03231}) \\
& + & (\a_2+\a_3)(\a_1+\a_2+\a_3) (A_{21012} + A_{03212} + A_{10312}) \\
& + & \a_3(\a_2+\a_3)(\a_1+\a_2+\a_3) (A_{210123} + A_{032123} + A_{103123}) \, .
\end{array}
$$
\end{example}

\addtocontents{toc}{\protect\setcounter{tocdepth}{2}}
\providecommand{\bysame}{\leavevmode\hbox to3em{\hrulefill}\thinspace}
\providecommand{\MR}{\relax\ifhmode\unskip\space\fi MR }
\providecommand{\MRhref}[2]{%
  \href{http://www.ams.org/mathscinet-getitem?mr=#1}{#2}
}
\providecommand{\href}[2]{#2}


\end{document}